\newcommand{\la}{\langle}
\newcommand{\ra}{\rangle}
\newcommand{\Z}{\mathbb{Z}}
\newcommand{\R}{\mathbb{R}}
\newcommand{\C}{\mathbb{C}}
\newcommand{\HH}{\mathbb{H}}
\newcommand{\eps}{\varepsilon}
\newcommand{\mz}[2]{%
  \medskip
  \noindent
  \begin{minipage}{.06\linewidth}\hfill$\displaystyle#1$\end{minipage}%
    \hspace{.3em}%
  \begin{minipage}{.9\linewidth}$\displaystyle#2$\end{minipage}
  }
\DeclareMathOperator{\crr}{\rm cr}
\DeclareMathOperator{\landau}{\rm O}
\DeclareMathOperator{\IM}{\rm Im}
\DeclareMathOperator{\RE}{\rm Re}
\def\dash{\discretionary{-}{}{-}\penalty1000\hskip0pt}
\def\lput(#1,#2)#3{\put(#1,#2){\hbox to 0pt{\hss{#3}}}}
\def\rput(#1,#2)#3{\put(#1,#2){\hbox to 0pt{{#3}\hss}}}
\def\cput(#1,#2)#3{\put(#1,#2){\hbox to 0pt{\hss{#3}\hss}}}
\newcommand{\num}{\renewcommand{\labelenumi}{(\roman{enumi})}
  \renewcommand{\theenumi}{(\roman{enumi})}}
\theoremstyle{plain}
\newtheorem{lem}{Lemma}
\newtheorem{thm}[lem]{Theorem}
\newtheorem{cor}[lem]{Corollary}
\newtheorem{prop}[lem]{Proposition}
\theoremstyle{remark}
\newtheorem{rem}[lem]{Remark}
\theoremstyle{definition}
\newtheorem{defn}[lem]{Definition}
\def\@captionfont{\normalfont\footnotesize}\makeatother
\begin{document}

\author{Christian M{\"u}ller and Amir Vaxman}
\title{Discrete Curvature and Torsion from Cross-Ratios}

\begin{abstract}
  Motivated by a M{\"o}bius invariant subdivision scheme for polygons, we
  study a curvature notion for discrete curves where the cross-ratio plays
  an important role in all our key definitions.
  Using a particular M{\"o}bius invariant point-insertion-rule, comparable
  to the classical four-point-scheme, we construct circles along discrete
  curves. 
  Asymptotic analysis shows that these circles defined on a sampled curve
  converge to the smooth curvature circles as the sampling density
  increases.
  We express our discrete torsion for space curves, which is not a
  M{\"o}bius invariant notion, using the cross-ratio and show its
  asymptotic behavior in analogy to the curvature.
\end{abstract}

\maketitle

\section{Introduction}

Many topics in applied geometry like computer graphics, computer vision,
and geometry processing in general, cover tasks like the acquisition and
analysis of geometric data, its reconstruction, and further its
manipulation and simulation.  Numerically stable approximations of
3D-geometric notions play here a crucial part in creating algorithms that
can handle such tasks. In particular the estimation of \emph{curvatures}
of curves and surfaces is needed in these geometric
algorithms~\cite{boutin-2000,langer+2005,pottmann-2009-iir}.  A good
understanding of estimating curvatures of curves is often an important
step in the direction of estimating curvatures of surfaces.

A different approach to discretized or discrete curvatures than by
numerical approximation comes from discrete differential
geometry~\cite{bobenko+2008}. There the motivation behind any
discretization is to apply the ideas and methods from classical
differential geometry to discrete objects like polygons and meshes without
``simply'' discretizing equations or using classical differential
calculus. Discrete curvature notions of curves are thus connected to a
sensible idea of a curvature circle~\cite{hoffmann-2009} or a consistent
definition of a Frenet\dash frame~\cite{carroll+2014} or geometric ideas
that appear in geometric knot theory~\cite{sullivan-2008}.  Sometimes
discrete definitions of ``differential'' notions of curves are justified
to be sensible by asymptotic analysis and convergence
behavior~\cite{mueller-cosh-2013,sauer-1970}.

In the present paper in a way we combine both strategies. For example, we
show the invariance of our discrete curvature circle with respect to
M{\"o}bius transformations or characterize classes of discrete curves that
are M{\"o}bius equivalent to an arc length parametrization.
On the other hand, and therein lies our focus, we use asymptotic
analysis to justify the definitions of our discrete notions. For example,
in analogy to Sauer~\cite{sauer-1970}, we discretize/sample a smooth curve
$s(t)$ by constructing the inscribed polygon $s(k \eps)$ with $k \in \Z$
as depicted in Figure~\ref{fig:curve} (right). And then we use this
discrete curve to prove, for example, that our discrete curvature
$\kappa_k$, which is defined at the polygon edge $k, k + 1$, is a second order
approximation of the curvature $\kappa$ of $s$, i.e., 
$\kappa_k = \kappa + \landau(\eps^2)$ as $\eps \to 0$, as we will see in
Theorem~\ref{thm:main}. Our definition of $\kappa_k$ will use four
consecutive points as input.
From our definition of the curvature circle we immediately obtain a
discrete Frenet-frame in Theorem~\ref{thm:zweibein} and
Section~\ref{subsec:frenetframe}.

In our definition of the discrete curvature circle appears the so
called cross\dash ratio of four points as main ingredient of its
definition.  The M{\"o}bius invariance of the cross\dash ratio thus
implies the same for the curvature circle, which also holds for smooth
curves. Even our definition for the torsion includes the cross\dash ratio
in its definition, however not only it as the torsion is not M{\"o}bius
invariant.

Our exposition starts with setting the scene in the preliminaries
(Sec.~\ref{sec:prel}). Then we investigate a discrete curvature notion for
planar curves (Sec.~\ref{sec:curvatureplanar}) which we generalize to
space curves in Section~\ref{sec:3dcurves}. In Section~\ref{sec:torsion}
we study a discrete torsion for three\dash dimensional curves.  In
Section~\ref{sec:geometric} we consider some special cases and geometric
properties of a particular `point\dash insertion\dash rule'
(Eqn.~\eqref{eq:inserting}) that plays an important rule in our definition
of the discrete curvature.
Finally, in Section~\ref{sec:num} we perform numerical experiments to
verify our discrete notions of curvature and torsion.

\section{Preliminaries}
\label{sec:prel}

\subsection{Quaternions}
\label{subsec:quaternions}

The Hamiltonian quaternions $\HH$ are very well suited for expressing 
geometry in three dimensional space and in particular for three
dimensional M{\"o}bius geometry (Sec.~\ref{subsec:moebiusgeometry}).
The quaternions constitute a skew field whose elements can be identified
with $\R \times \R^3$. In this paper we write quaternions in the following
way:
$$
  \HH = \{[r, v] \mid r \in \R, v \in \R^3\}.
$$
The first component $r = \RE q$ of a quaternion $q = [r, v]$ is called
\emph{real part} and the second component $v = \IM q$ \emph{imaginary
part}. Consequently, we write $\IM \HH = \{q \in \HH \mid q = [0, v],\
\text{with}\ v \in \R^3\}$.
The addition in this notation of $\HH$ reads $[r, v] + [s, w] = [r + s, v
+ w]$, and the multiplication reads 
$[r, v] \cdot [s, w] = [rs - \la v, w\ra, r w + s v + v \times w]$, 
where $\la \cdot, \cdot \ra$ is the Euclidean scalar product in $\R^3$ and
where $\times$ is the cross product. The \emph{conjugation} of
$q = [r, v]$ is defined by $\overline{q} = [r, -v]$ and the square root of
the real number $q \bar q$ is called \emph{norm} of $q$ and is
denoted by $|q| = \sqrt{q \bar q}$. For every $q \in \HH \setminus \{ 0
\}$ its \emph{inverse} is given by $q^{-1} = \overline{q}/|q|^2$.

Any quaternion $q \in \HH$ can be represented in its \emph{polar
representation} $q = |q| [\cos \phi, v \sin \phi]$ with $\|v\| = 1$ and
$\phi \in [0, \pi]$. In that case we can define the square root of $q$ by
$\sqrt{q} = \sqrt{|q|} [\cos \frac{\phi}{2}, v \sin \frac{\phi}{2}]$.
Also when computing the square root of a complex number we will always
take the principal square root.

Finally, to express points and vectors of $\R^3$ with quaternions we identify
$\R^3$ with $\IM \HH$ via $v \leftrightarrow [0, v]$.

\subsection{M{\"o}bius geometry}
\label{subsec:moebiusgeometry}

A \emph{M{\"o}bius transformation} is a concatenation of a finite number of
reflections $\sigma$ in spheres (center $c$, radius $r$), hence 
$\sigma: \R^n \cup \{\infty\} \to \R^n \cup \{\infty\}$ with $\sigma(x) =
(x - c)/\|x - c\|^2 + c$, $\sigma(\infty) = c$, $\sigma(c) = \infty$.
Invariants in M{\"o}bius geometry are consequently notions and objects
that stay invariant under M{\"o}bius transformations. An important example
of an invariant of planar M{\"o}bius geometry is the complex cross\dash
ratio.

\subsection{Cross-ratio}

The cross\dash ratio is a fundamental notion in geometry, in particular
M{\"o}bius geometry. For four quaternionic numbers $a, b, c, d \in \HH $
the \emph{cross\dash ratio} is defined as
$$
  \crr(a, b, c, d) := (a - b) (b - c)^{-1} (c - d)(d - a)^{-1},
$$
and it is therefore a quaternion itself. The complex numbers $\C$
constitute a subfield in $\HH$. In our notation $\C$ can be embedded in
$\HH$ as $\C \cong \{q \in \HH \mid q = [r, (x, 0, 0)],\ \text{with}\ r, x
\in \R\}$.  Consequently, the cross\dash ratio for complex numbers can be
written in the form
$$
  \crr(a, b, c, d) = \frac{(a - b) (c - d)}{(b - c)(d - a)},
$$
as $\C$ is commutative.

It is well known that the cross\dash ratio of four points in $\R^3$ or in
$\C$ is real if and only if the four points are concyclic (see
e.g.~\cite{bobenko+1996}).

\subsection{Smooth curves}

Our goal is to define a notion of curvature and torsion for discrete
curves (Sec.~\ref{subsec:disccurve}). We will compare our discrete
notions to those of the classical (smooth) differential geometry and as
such to parametrized curves $s: \R \to \R^3$. We will always
assume $s$ to be sufficiently differentiable.  The \emph{curvature}
$\kappa$ and \emph{torsion} $\tau$ of $s$ are given by (see
e.g.~\cite{docarmo-1976})
\begin{equation}
  \label{eq:curvaturetorsion}
  \kappa = \frac{\|s' \times s''\|}{\|s'\|^3},
  \quad\text{and}\quad
  \tau = -\frac{\la s' \times s'', s'''\ra}{\|s' \times s''\|^2}.
\end{equation}
The torsion vanishes if and only if the curve is planar.  For a planar
curve $s: \R \to \R^2$ the curvature is the oriented quantity
\begin{equation}
  \label{eq:curvaturetwod}
\kappa = \frac{\det(s', s'')}{\|s'\|^3}.
\end{equation}

\subsection{Discrete curves}
\label{subsec:disccurve}

By a \emph{discrete curve} we understand a polygonal curve in $\R^2$ or
$\R^3$ which is given by its vertices hence by the map $\gamma: \Z \to
\R^3$. To get visually closer to the notion of a smooth curve we connect
for all $i \in \Z$ consecutive vertices $\gamma(i) \gamma(i + 1)$ by a
straight line segment, the \emph{edges}. However, connecting by line
segments is not crucial in this paper except for better visualizations in
our illustrations. To shorten the notation we will write $\gamma_i$ for
$\gamma(i)$. We call the discrete curve \emph{planar} if it is contained
in a plane, i.e., in a two dimensional affine subspace.

\section{Curvature of Planar Discrete Curves}
\label{sec:curvatureplanar}

We first begin our investigation with a discrete curvature notion for
\emph{planar} curves and extend it in Section~\ref{sec:3dcurves} to curves
in $\R^3$. We identify the two dimensional plane in which our curves live
with the plane of complex numbers $\C$. Before we proceed to the
definition of the curvature (Sec.~\ref{subsec:curvature}) we will consider
a `point\dash insertion\dash rule' in Section~\ref{subsec:inserting}.  We
have also considered this point\dash insertion\dash rule in the context of
a M{\"o}bius invariant subdivision method in~\cite{vaxman+2018}.

\subsection{Point-insertion-rule in $\C$}
\label{subsec:inserting}

Let $a, b, c, d \in \C$ be four pairwise distinct points. We construct a
new point $f(a, b, c, d) \in \C$ in an, at a first glance, very
unintuitive way:
\begin{equation}
  \label{eq:inserting}
  f(a, b, c, d) 
  := 
  \frac{c (b - a) \sqrt{\crr(c, a, b, d)} + b (c - a)}
       {(b - a) \sqrt{\crr(c, a, b, d)} + (c - a)}
       \in \C \cup \infty.
\end{equation}
We will explain more about special cases and the geometric relation of $f$
with respect to $a, b, c, d$ in Section~\ref{sec:geometric}. 

\begin{lem}
  \label{lem:crequation}
  The newly inserted point $f(a, b, c, d)$ fulfills
  $$
    \crr(c, a, b, f(a, b, c, d)) = -\sqrt{\crr(c, a, b, d)}.
  $$
  In particular the construction of $f$ is M{\"o}bius invariant.
\end{lem}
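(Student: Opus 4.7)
The plan is to verify the identity by a direct, almost mechanical, computation in $\C$, and then extract the M\"obius invariance as a formal consequence of the fact that the first claim characterizes $f$ among all extensions of $\{a,b,c\}$.

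First I would set $q := \sqrt{\crr(c,a,b,d)}$ so that the definition reads
$$f = \frac{cq(b-a)+b(c-a)}{q(b-a)+(c-a)}.$$
Writing $\crr(c,a,b,f) = \frac{(c-a)(b-f)}{(a-b)(f-c)}$, the two nontrivial quantities to compute are $b-f$ and $f-c$. Putting each over the common denominator $D := q(b-a)+(c-a)$, a short cancellation gives
$$b-f = \frac{q(b-a)(b-c)}{D}, \qquad f-c = \frac{(c-a)(b-c)}{D}.$$
Substituting these into the cross-ratio, the factors $(c-a)$, $(b-c)$ and $D$ cancel and one is left with
$$\crr(c,a,b,f) = \frac{q(b-a)}{a-b} = -q = -\sqrt{\crr(c,a,b,d)},$$
which is the identity to be shown. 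The only thing to watch here is that $a,b,c,d$ are pairwise distinct, so none of the cancelled factors vanish and $D\neq 0$ generically; the boundary cases are absorbed by allowing the value $\infty$, as already stipulated in \eqref{eq:inserting}.

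For the M\"obius invariance, I would argue as follows. Given three pairwise distinct points $c,a,b$, the map sending a fourth point $x$ to $\crr(c,a,b,x)$ is a M\"obius transformation of $\C\cup\{\infty\}$, hence a bijection. Thus $f$ is uniquely determined by the value of $\crr(c,a,b,f)$ established above. Now let $\mu$ be any M\"obius transformation. Applying it to the five points $a,b,c,d,f(a,b,c,d)$ and using the M\"obius invariance of the cross-ratio, we get
$$\crr(\mu c,\mu a,\mu b,\mu f(a,b,c,d)) = \crr(c,a,b,f(a,b,c,d)) = -\sqrt{\crr(c,a,b,d)} = -\sqrt{\crr(\mu c,\mu a,\mu b,\mu d)}.$$
By the uniqueness just noted, this forces $\mu f(a,b,c,d) = f(\mu a,\mu b,\mu c,\mu d)$, which is exactly the M\"obius invariance of the construction.

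No step strikes me as a real obstacle. The only mildly delicate point is the choice of branch of the square root: the formula $-\sqrt{\crr(c,a,b,d)}$ and the square root appearing in the definition of $f$ must refer to the same (principal) branch, which is the convention fixed in Section~\ref{subsec:quaternions}. Once this is kept consistent, the proof reduces to the brief algebraic identity above and the uniqueness remark.
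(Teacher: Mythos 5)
Your proof is correct and takes essentially the same route as the paper: the paper expands the cross-ratio equation $\crr(c,a,b,f)=-\sqrt{\crr(c,a,b,d)}$ and solves it for $f$ to recover \eqref{eq:inserting}, while you run the identical algebra in the opposite direction, verifying the equation from the formula. Your Möbius-invariance argument (uniqueness of $f$ from the bijectivity of $x\mapsto\crr(c,a,b,x)$ together with invariance of the cross-ratio) is just a more explicit rendering of the paper's remark that $f$ is determined by cross-ratios alone.
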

\begin{proof}
  We expand the cross\dash ratio on the left hand side and obtain
  $$
    \frac{(c - a) (b - f)}{(a - b) (f - c)} = -\sqrt{\crr(c, a, b, d)}.
  $$
  Now simple manipulations of this equation yield~\eqref{eq:inserting}.  
  The M{\"o}bius invariance follows immediately, as $f$ can be
  expressed just in terms of cross\dash ratios.
\end{proof}

\begin{figure}[t]
  \begin{overpic}[width=.37\textwidth]{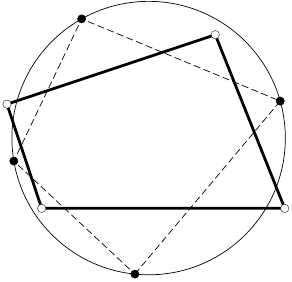}
    \lput(10,21){\small$a$}
    \cput(2,63){\small$b$}
    \lput(71,85){\small$c$}
    \lput(101,19){\small$d$}
    \rput(-4,36){\small$p_{ab}$}
    \lput(26,91){\small$p_{bc}$}
    \rput(96,66){\small$p_{cd}$}
    \cput(44,-2){\small$p_{da}$}
  \end{overpic}
  \hfill
  \begin{overpic}[width=.36\textwidth]{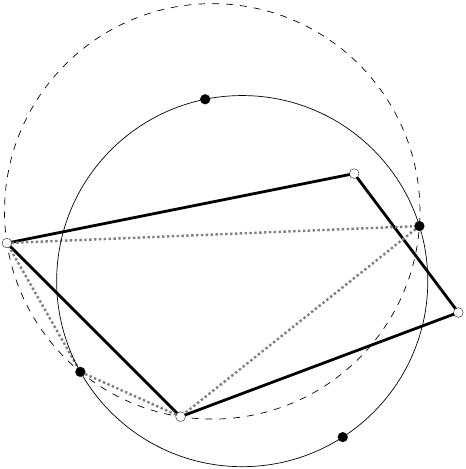}
    \lput(38,6){\small$b$}
    \lput(0,43){\small$a$}
    \lput(76,65){\small$d$}
    \rput(97,28){\small$c$}
    \lput(16,17){\small$p_{ab}$}
    \rput(91,55){\small$p_{cd}$}
    \cput(37,80){\small$p_{da}$}
    \cput(80,3){\small$p_{bc}$}
    \cput(20,70){\small$k$}
  \end{overpic}
  \hfill
  \begin{overpic}[width=.17\textwidth]{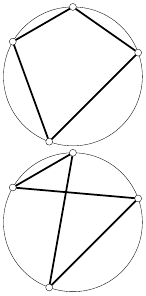}
    \lput(15,49){\small$a$}
    \lput(49,84){\small$b$}
    \lput(26,100){\small$c$}
    \rput(0,88){\small$d$}
    \lput(14,0){\small$a$}
    \lput(30,45){\contour{white}{\small$b$}}
    \lput(2,35){\small$c$}
    \rput(49,29){\small$d$}
  \end{overpic}
  \caption{\emph{Left}: A quadrilateral $a, b, c, d \in \C$ with its newly
  inserted points $p_{ab}, p_{bc}, p_{cd}, p_{da}$ which have a cross\dash
  ratio of $-1$ and therefore lie on a common circle. 
    \emph{Center}: The four points $a, p_{ab}, b, p_{cd}$ also have a
    cross\dash ratio of $-1$ and lie therefore also on a common circle $k$.
    Furthermore, $\crr(a, p_{ab}, b, p_{cd}) = -1$ implies that the pair
    $(a, b)$ is separated by the pair $(p_{ab}, p_{cd})$. Consequently,
    $a$ and $b$ lie on different sides of $k$.
    \emph{Right}: Two concyclic quadrilaterals, convex (\emph{top}) and
    non\dash convex with crossing edges (\emph{bottom}).
    }
  \label{fig:quadpluscircle}
\end{figure}

\begin{thm}
  \label{thm:circle}
  Let $a, b, c, d \in \C$ be four pairwise distinct points and consider
  the four new points obtained from $f$ by cyclic permutation:
  $$
    p_{ab} = f(d, a, b, c),\
    p_{bc} = f(a, b, c, d),\
    p_{cd} = f(b, c, d, a),\
    p_{da} = f(c, d, a, b).
  $$
  Then $p_{ab}, p_{bc}, p_{cd}, p_{da}$ are concyclic with $\crr(p_{ab},
  p_{bc}, p_{cd}, p_{da}) = -1$ (see Figure~\ref{fig:quadpluscircle} left).
\end{thm}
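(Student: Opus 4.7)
My plan is to exploit the M\"obius invariance of the construction established in Lemma~\ref{lem:crequation}. Since $f$ commutes with M\"obius transformations, so do the four new points; hence $\crr(p_{ab},p_{bc},p_{cd},p_{da})$ is a M\"obius invariant of the ordered quadruple $(a,b,c,d)$, and therefore depends only on $\crr(a,b,c,d)$. It is thus enough to verify the claim in a single normal form with one complex parameter, rather than for all configurations.

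I would choose the normalization $a=0$, $b=1$, $c=\infty$, $d=q$ with $q\in\C\setminus\{0,1\}$. A direct evaluation of~\eqref{eq:inserting}, taking the obvious limits for arguments equal to $\infty$, gives
\begin{align*}
  p_{ab} &= q-\sqrt{q(q-1)}, & p_{cd} &= q+\sqrt{q(q-1)},\\
  p_{bc} &= 1+\sqrt{1-q},    & p_{da} &= 1-\sqrt{1-q}.
\end{align*}
Writing $\alpha=\sqrt{q(q-1)}$ and $\beta=\sqrt{1-q}$, the factors appearing in $\crr(p_{ab},p_{bc},p_{cd},p_{da})$ organize themselves as
$(p_{ab}-p_{bc})(p_{cd}-p_{da}) = (q-1)^{2}-(\alpha+\beta)^{2}$ and
$(p_{bc}-p_{cd})(p_{da}-p_{ab}) = (q-1)^{2}-(\beta-\alpha)^{2}$.
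The identity $\alpha^{2}+\beta^{2}=(q^{2}-q)+(1-q)=(q-1)^{2}$ then collapses these to $-2\alpha\beta$ and $+2\alpha\beta$ respectively, so their ratio is $-1$. Since $-1\in\R$, the standard criterion that four points of $\C$ are concyclic iff their cross-ratio is real (recalled in Section~\ref{sec:prel}) yields the concyclicity.

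The step I expect to be the main obstacle is bookkeeping the branches of the square roots so that the four instances of Lemma~\ref{lem:crequation} hold simultaneously with consistent signs: each $p_{\cdot}$ is determined only up to a $\pm$ choice, and only the specific pairing above matches the prescribed sign in the lemma. The saving grace is that the decisive identity $\alpha^{2}+\beta^{2}=(q-1)^{2}$ involves only squared quantities and is therefore insensitive to the branch, which is why the final algebra collapses to $-1$ cleanly.
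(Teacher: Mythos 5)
Your proposal is correct, but it follows a genuinely different route from the paper. The paper proves the theorem by a direct symbolic computation for arbitrary $a,b,c,d$: using $\crr(b,a,d,c)=\crr(a,b,c,d)$ so that $p_{ab},p_{cd}$ (resp.\ $p_{bc},p_{da}$) involve the same radical, it computes $p_{ab}p_{cd}$, $p_{bc}p_{da}$ and $p_{ab}+p_{cd}$, $p_{bc}+p_{da}$ (the square roots cancel) and verifies the equivalent identity $2p_{ab}p_{cd}+2p_{bc}p_{da}=(p_{ab}+p_{cd})(p_{bc}+p_{da})$; the intermediate formulas \eqref{eq:abcd} and \eqref{eq:abpcd} are then reused in the proof of Lemma~\ref{lem:harmonic}. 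You instead use the M\"obius covariance of $f$ --- legitimate, since Lemma~\ref{lem:crequation} characterizes $f$ by a cross-ratio equation that is itself M\"obius invariant --- to normalize $(a,b,c,d)=(0,1,\infty,q)$, reducing everything to a one-parameter identity. I checked your normal-form values: with the principal roots one gets $p_{bc}=1+\beta$, $p_{da}=1-\beta$ with $\beta^2=1-q$, and $p_{ab}=q-\sigma$, $p_{cd}=q+\sigma$ with $\sigma^2=q(q-1)$, and the collapse via $\sigma^2+\beta^2=(q-1)^2$ gives the ratio $-1$ exactly as you say (note $q\neq0,1$ guarantees $\sigma\beta\neq0$ and that the four points are distinct). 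Your branch worry is indeed harmless, though for a slightly different reason than the one you give: the cross-ratios feeding $p_{ab}$ and $p_{cd}$ (and likewise $p_{bc}$ and $p_{da}$) are literally equal, so the two points automatically carry the same root with opposite signs, and flipping the sign of $\sigma$ or $\beta$ merely swaps the points within a pair, which inverts the cross-ratio and so fixes the value $-1$. What your route buys is brevity and conceptual transparency, and it works naturally in $\C\cup\{\infty\}$ (so degenerate positions such as a $p$ at infinity, cf.\ the parallelogram case in Section~\ref{sec:geometric}, cause no trouble); what it costs is some care with the argument $c=\infty$ --- cleaner to evaluate $f$ there via the defining relation $\crr(c,a,b,f)=-\sqrt{\crr(c,a,b,d)}$ than by taking limits in \eqref{eq:inserting} --- and the loss of the explicit general formulas that the paper recycles for the harmonic-conjugate lemma.
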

\begin{proof}
  It is a well known fact that the cross\dash ratio of four points is real if
  and only if the four points lie on a circle. Hence, we only have to show
  the second part namely
  \begin{equation}
    \label{eq:crvier}
    \frac{(p_{ab} - p_{bc}) (p_{cd} - p_{da})}{(p_{bc} - p_{cd})(p_{da} -
    p_{ab})} = -1,
    \quad\text{or equivalently}\quad
    2 p_{ab} p_{cd} + 2 p_{bc} p_{da} = 
    (p_{ab} + p_{cd})(p_{bc} + p_{da}).
  \end{equation}
  Another well known (and readily verifiable) fact about the cross\dash
  ratio is $\crr(b, a, d, c) = \crr(a, b, c, d)$. Consequently, the
  cross\dash ratios that appear in the definition of $p_{ab}$ and $p_{cd}$
  are the same as well as in $p_{bc}$ and $p_{da}$.
  So, let us denote by $q$ the cross\dash ratios $q := \crr(c, a, b, d) =
  \crr(a, c, d, b)$ and let us start to collect the terms of the second
  equation of
  \eqref{eq:crvier}:
  \begin{align*}
    p_{bc} p_{da}
    &=
    \frac{c (b - a) \sqrt{q} + b (c - a)}
       {(b - a) \sqrt{q} + (c - a)}
       \cdot
    \frac{a (d - c) \sqrt{q} + d (a - c)}
       {(d - c) \sqrt{q} + (a - c)}
    = \ldots =
    \\
    &=
    \frac{(1 + \sqrt{q}) (a b d - a b c - b c d + a c d)}{(1 + \sqrt{q})
    (a - b - c + d)}
    =
    \frac{a b d - a b c - b c d + a c d}{a - b - c + d}.
  \end{align*}
  As $p_{ab}$ and $p_{cd}$ result from $p_{da}$ and $p_{bc}$,
  respectively, by a cyclic permutation of one step ($a \to b, b \to c, c
  \to d, d \to a$) we immediately obtain by shifting from the last identity 
  \begin{equation}
    \label{eq:abcd}
    p_{ab} p_{cd}
    =
    \frac{a b c - b c d - a c d + a b d}{a + b - c - d}.
  \end{equation}
  Next we compute the factors of the right hand side:
  \begin{align*}
    p_{bc} + p_{da}
    =
    \frac{c (b - a) \sqrt{q} + b (c - a)}
       {(b - a) \sqrt{q} + (c - a)}
    +
    \frac{a (d - c) \sqrt{q} + d (a - c)}
       {(d - c) \sqrt{q} + (a - c)}
    = \ldots =
    \frac{2 a d - 2 b c}{a - b - c + d},
  \end{align*}
  and again the same permutation of one shift yields
  \begin{equation}
    \label{eq:abpcd}
    p_{ab} + p_{cd}
    = 
    \frac{2 a b - 2 c d}{a + b - c - d}.
  \end{equation}
  Adding and multiplying these notions together yields
  Equation~\eqref{eq:crvier}.
\end{proof}

Lemma~\ref{lem:crequation} immediately implies the following two important
consequences:
\begin{cor}
  \label{cor:moebinv}
  \begin{enumerate}\num
    \item If $a, b, c, d$ lie on a circle such that $\crr(c, a, b, d) > 0$
      (which is the case for a convex quadrilateral, i.e., non\dash crossing
      edges; see Figure~\ref{fig:quadpluscircle} right) then the four points
      $p_{ab}, p_{bc}, p_{cd}, p_{da}$ lie on the same circle.
    \item\label{cor:moebinvii} The circle given by
      Theorem~\ref{thm:circle} is connected to $a,
      b, c, d$ in a M{\"o}bius invariant way.
  \end{enumerate}
\end{cor}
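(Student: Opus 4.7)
\emph{Proof plan.} Both parts should follow almost directly from Lemma~\ref{lem:crequation}, together with the concyclicity criterion (already recalled just before Theorem~\ref{thm:circle}) that four points have real cross-ratio iff they lie on a common circle or line.

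For (i), I would apply Lemma~\ref{lem:crequation} to each of the four $p$-points in turn. Set $q := \crr(c,a,b,d)$. Applied to $p_{bc} = f(a,b,c,d)$, the lemma gives $\crr(c,a,b,p_{bc}) = -\sqrt{q}$. Concyclicity of $a,b,c,d$ forces $q \in \R$, and the hypothesis $q>0$ keeps the principal square root real, so $\crr(c,a,b,p_{bc}) \in \R$ and $p_{bc}$ lies on the circle through $a,b,c$. The other three cases are handled identically; the point to verify is that the cross-ratios appearing in the defining formulas of $p_{ab}, p_{cd}, p_{da}$, namely $\crr(b,d,a,c), \crr(d,b,c,a), \crr(a,c,d,b)$, all coincide with $q$. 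This is just the Klein-four symmetry $\crr(a,b,c,d)=\crr(b,a,d,c)=\crr(c,d,a,b)=\crr(d,c,b,a)$ of the cross-ratio, the same identity already used in the proof of Theorem~\ref{thm:circle}. Consequently each $p_\bullet$ lies on the single circle through $a,b,c,d$.

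For (ii), the M{\"o}bius invariance is essentially a formality once one has Lemma~\ref{lem:crequation}. That lemma characterizes each $p_\bullet$ purely through a cross-ratio equation in the four input points, and M{\"o}bius transformations preserve cross-ratios. So if $\mu$ is any M{\"o}bius transformation, $\mu(p_\bullet)$ satisfies the same equation with inputs $\mu(a), \mu(b), \mu(c), \mu(d)$ and hence coincides with the point produced by the same construction applied to those images. Combined with the fact that $\mu$ maps circles to circles (or lines), this says that the circle provided by Theorem~\ref{thm:circle} is attached to $(a,b,c,d)$ in a M{\"o}bius-equivariant way.

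I do not foresee a real obstacle: the arithmetic is already done inside Lemma~\ref{lem:crequation} and Theorem~\ref{thm:circle}, and the corollary just reads off two consequences. The only delicate point is consistency of the principal square root in the defining formula of $f$ under M{\"o}bius transformations of cross-ratios, but that is precisely what the final sentence of Lemma~\ref{lem:crequation} asserts, and we are entitled to quote it.
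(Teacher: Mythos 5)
Your part (ii) is fine and is essentially the paper's (implicit) argument: the M{\"o}bius invariance of $f$ is exactly the last assertion of Lemma~\ref{lem:crequation}, and since M{\"o}bius transformations map circles to circles, the circle of Theorem~\ref{thm:circle} is attached to $a,b,c,d$ equivariantly.

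Part (i), however, contains a genuine error. You claim that the cross-ratios occurring in the definitions of all four inserted points coincide with $q=\crr(c,a,b,d)$ ``by Klein-four symmetry''. They do not. The Klein four-group only pairs them: $\crr(c,a,b,d)=\crr(a,c,d,b)$ (the cross-ratios used for $p_{bc}$ and $p_{da}$) and $\crr(b,d,a,c)=\crr(d,b,c,a)$ (those used for $p_{ab}$ and $p_{cd}$); the tuple $(b,d,a,c)$ arises from $(c,a,b,d)$ by a $4$-cycle, not by a double transposition. This is precisely how the proof of Theorem~\ref{thm:circle} states it, and Lemma~\ref{lem:hilfslemmaeins} makes the difference quantitative: for a sampled curve the two values tend to $4$ and $4/3$, so they are generically distinct. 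The gap matters, because your argument for $p_{ab}$ and $p_{cd}$ needs $\crr(b,d,a,c)>0$ so that the principal square root is real, and this does \emph{not} follow from $\crr(c,a,b,d)>0$ alone: for concyclic points in the cyclic order $a,c,b,d$ (take $a=1$, $c=2$, $b=3$, $d=4$ on a line) one gets $\crr(c,a,b,d)=1/4>0$ but $\crr(b,d,a,c)=-1/3<0$, so $\crr(b,d,a,p_{ab})=-\sqrt{-1/3}$ is non-real and $p_{ab}$ leaves the circumcircle. What rescues the statement in the intended (convex) situation is a separate sign check: a short computation gives $\crr(b,d,a,c)=\crr(c,a,b,d)/\bigl(\crr(c,a,b,d)-1\bigr)$, and for a convex concyclic quadrilateral one has $\crr(c,a,b,d)>1$, hence both cross-ratios are positive and your concyclicity argument then works for all four points. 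You should therefore either add this sign analysis or restrict explicitly to the convex case highlighted in the statement's parenthesis; as written, the step ``all four cross-ratios coincide with $q$'' is false.
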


\begin{lem}
  \label{lem:harmonic}
  For any four pairwise distinct points $a, b, c, d \in \C$ the
  harmonic conjugate of $p_{ab}$ with respect to $a, b$ is $p_{cd}$, which
  equivalently means 
  $\crr(a, p_{ab}, b, p_{cd}) = -1$ 
  (see also Figure~\ref{fig:quadpluscircle} center).  Analogously, for the
  other quadruples we have
  $\crr(b, p_{bc}, c, p_{da}) = -1$,
  $\crr(c, p_{cd}, d, p_{ab}) = -1$, and
  $\crr(d, p_{da}, a, p_{bc}) = -1$.
\end{lem}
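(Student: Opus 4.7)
The plan is to reduce the claim $\crr(a, p_{ab}, b, p_{cd}) = -1$ to an algebraic identity in the symmetric functions $p_{ab} + p_{cd}$ and $p_{ab} p_{cd}$, both of which have already been computed in the proof of Theorem~\ref{thm:circle}.

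First I would expand the cross\dash ratio equation. Writing $\crr(a, p_{ab}, b, p_{cd}) = -1$ as
$$(a - p_{ab})(b - p_{cd}) = -(p_{ab} - b)(p_{cd} - a),$$
multiplying out and collecting terms, one checks that this is equivalent to the symmetric identity
$$2\bigl(ab + p_{ab}\, p_{cd}\bigr) \;=\; (a + b)\bigl(p_{ab} + p_{cd}\bigr).$$
This is the same shape of reduction used for~\eqref{eq:crvier}, and here it is advantageous precisely because $a$ and $b$ enter only through their sum and product, while $p_{ab}$ and $p_{cd}$ appear only through theirs.

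Next, I would substitute in the formulas~\eqref{eq:abcd} and~\eqref{eq:abpcd} from the proof of Theorem~\ref{thm:circle}, namely
$$p_{ab} + p_{cd} = \frac{2ab - 2cd}{a+b-c-d}, \qquad p_{ab}\, p_{cd} = \frac{abc + abd - acd - bcd}{a+b-c-d}.$$
Both sides of the reduced identity then carry the common denominator $a+b-c-d$, and a direct expansion shows each side equals $2(a+b)(ab-cd)/(a+b-c-d)$. This settles $\crr(a, p_{ab}, b, p_{cd}) = -1$.

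For the three remaining harmonic identities, I would invoke the cyclic symmetry of the construction: the substitution $a \mapsto b \mapsto c \mapsto d \mapsto a$ sends $p_{ab} \mapsto p_{bc} \mapsto p_{cd} \mapsto p_{da} \mapsto p_{ab}$, so iterating it on the relation just proved yields the other three cases. The only mild obstacle is the very first step — recognizing that the cross\dash ratio equation collapses to an identity symmetric in $\{a,b\}$ and in $\{p_{ab}, p_{cd}\}$ — which is exactly what makes the rest of the argument reduce to a one\dash line verification using formulas already in hand.
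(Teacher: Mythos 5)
Your proposal is correct and follows essentially the same route as the paper: both reduce $\crr(a, p_{ab}, b, p_{cd}) = -1$ to the identity $2ab + 2\,p_{ab}\,p_{cd} = (a+b)(p_{ab} + p_{cd})$ and then substitute the sum and product formulas~\eqref{eq:abpcd} and~\eqref{eq:abcd} from the proof of Theorem~\ref{thm:circle}. Your explicit evaluation of both sides as $2(a+b)(ab-cd)/(a+b-c-d)$ and the cyclic-permutation argument for the remaining three identities are just slightly more detailed versions of what the paper leaves implicit.
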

\begin{proof}
  We show 
  $$
    \crr(a, p_{ab}, b, p_{cd}) = -1,
    \quad\text{or equivalently}\quad
    2 a b + 2 p_{ab} p_{cd} = (p_{ab} + p_{cd}) (a + b).
  $$
  The product $p_{ab} p_{cd}$ has been computed before in
  Equation~\eqref{eq:abcd} and the sum $p_{ab} + p_{cd}$ in
  Equation~\eqref{eq:abpcd}. Multiplying these terms together as written
  above on the right hand side concludes the proof.
\end{proof}

\begin{cor}
  \label{cor:insideoutside}
  Let $a, b, c, d \in \C$ be four pairwise distinct points and let $k$
  denote the circle through $p_{ab}, p_{bc}, p_{cd}, p_{da}$. Then either
  all eight points lie on the same circle $k$, or $a, c$ lie on one side of
  $k$ and $b, d$ on the other side (see Figure~\ref{fig:quadpluscircle}
  center).
\end{cor}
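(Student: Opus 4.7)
The plan is to exploit the harmonic-conjugate relations of Lemma~\ref{lem:harmonic}, together with the standard geometric fact that four concyclic points $u,v,w,z$ with $\crr(u,v,w,z)=-1$ have the property that the pairs $\{u,w\}$ and $\{v,z\}$ separate each other on their common circle. Applied to $\crr(a,p_{ab},b,p_{cd})=-1$, this tells us that $a,p_{ab},b,p_{cd}$ lie on some circle $k'$ and that the pair $\{a,b\}$ separates the pair $\{p_{ab},p_{cd}\}$ on $k'$.

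Since $p_{ab},p_{cd}$ already lie on $k$ as well, either $k'=k$ (and so $a,b$ lie on $k$), or $k'$ meets $k$ transversally in exactly these two points. In the transverse case, $p_{ab}$ and $p_{cd}$ split $k'$ into two arcs that lie in opposite components of $\C\setminus k$, and the harmonic separation forces $a$ and $b$ onto opposite arcs, hence onto opposite sides of $k$. Applying the same reasoning to $\crr(b,p_{bc},c,p_{da})=-1$ yields that either $b,c\in k$ or $b$ and $c$ lie on opposite sides of $k$. Combining the two statements, provided none of the four points lies on $k$, $a$ and $c$ end up on the \emph{same} side of $k$; a final use of $\crr(c,p_{cd},d,p_{ab})=-1$ shows $c$ and $d$ are on opposite sides, placing $d$ on the same side as $b$. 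This is exactly the dichotomy claimed.

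For the degenerate case, if any one of $a,b,c,d$, say $a$, happens to lie on $k$, then the circle $k'$ through $a,p_{ab},p_{cd}$ contains three points of $k$ and must coincide with $k$; then $b\in k$ as well, and propagating this through the other two harmonic quadruples puts all eight points on $k$.

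The main obstacle is the geometric step that relates a negative-real cross-ratio to the linking of the two pairs on their common circle. If one does not wish to invoke it as a black box, the cleanest workaround is to first apply a M\"obius transformation sending $k$ to the real axis; after this normalisation $p_{ab},p_{cd}\in\R$, and the equation $\crr(a,p_{ab},b,p_{cd})=-1$ reduces to an elementary identity in $\C$ that forces $\IM(a)$ and $\IM(b)$ to have opposite signs (or both to vanish), which is precisely the separation claim used in the argument above.
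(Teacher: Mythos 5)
Your argument is correct and follows essentially the same route as the paper's proof: it rests on Lemma~\ref{lem:harmonic} together with the fact that a harmonic (negative real) cross-ratio makes the two pairs separate each other on their common auxiliary circle, which meets $k$ exactly at $p_{ab},p_{cd}$ (resp.\ $p_{bc},p_{da}$, resp.\ $p_{cd},p_{ab}$), and then chains the side-switching through $a\to b\to c\to d$. Your extra care with the degenerate all-on-$k$ case and the normalization of $k$ to the real axis to verify the separation fact are refinements of, not departures from, the paper's argument.
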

\begin{proof}
  The two points $p_{ab}$ and $p_{cd}$ lie on the circle $k$.
  Suppose $a$ lies outside of $k$ as in Figure~\ref{fig:quadpluscircle}
  (center).
  Then Lemma~\ref{lem:harmonic} implies that $b$ lies on a circle through
  $a, p_{ab}, p_{cd}$, and further, that $b$ is separated from $a$ by
  $p_{ab}$ and $p_{cd}$.
  Consequently, $b$ lies inside $k$ (see Figure~\ref{fig:quadpluscircle}
  center). The same argument then implies that $c$ lies outside again and
  further $d$ inside. 
\end{proof}

\begin{figure}[t]
  \begin{overpic}[width=.5\textwidth]{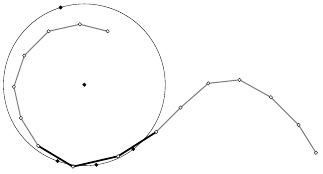}
    \rput(12,10){\tiny$\gamma_{i - 1}$}
    \rput(21,4){\tiny$\gamma_{i}$}
    \lput(37,7){\tiny$\gamma_{i + 1}$}
    \rput(49,11){\tiny$\gamma_{i + 2}$}
    \lput(18,2){\tiny$p_{ab}$}
    \rput(29,0){\tiny$p_{bc}$}
    \rput(40,5){\tiny$p_{cd}$}
    \lput(18,52){\tiny$p_{da}$}
    \lput(51,42){\small$k_i$}
    \put(28,25){\small$m_i$}
  \end{overpic}
  \hfill
  \begin{overpic}[width=.45\textwidth]{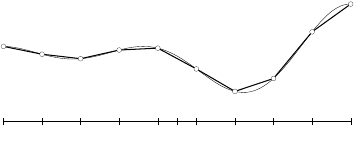}
    \cput(-1,2){\tiny$u\!\! +\!\! (2k\!\! -\!\! 1) \eps$}
    \cput(12.5,2){\tiny$u\!\! -\!\! 7\eps$}
    \cput(23.5,2){\tiny$u\!\! -\!\! 5\eps$}
    \cput(34.5,2){\tiny$u\!\! -\!\! 3\eps$}
    \cput(44.5,2){\tiny$u\!\! -\!\! \eps$}
    \cput(50,2){\tiny$u$}
    \cput(55.5,2){\tiny$u\!\! +\!\! \eps$}
    \cput(67,2){\tiny$u\!\! +\!\! 3\eps$}
    \cput(78,2){\tiny$u\!\! +\!\! 5\eps$}
    \cput(89,2){\tiny$u\!\! +\!\! 7\eps$}
    \rput(0,9){\tiny$k$}
    \cput(12.5,9){\tiny$-3$}
    \cput(23.5,9){\tiny$-2$}
    \cput(34.5,9){\tiny$-1$}
    \cput(44.5,9){\tiny$0$}
    \cput(55.5,9){\tiny$1$}
    \cput(67,9){\tiny$2$}
    \cput(78,9){\tiny$3$}
    \cput(89,9){\tiny$4$}
    \cput(12.5,22){\tiny$\gamma_{-3}$}
    \cput(23.5,20){\tiny$\gamma_{-2}$}
    \cput(34.5,23){\tiny$\gamma_{-1}$}
    \cput(44.5,23){\tiny$\gamma_{0}$}
    \cput(55.5,18){\tiny$\gamma_{1}$}
    \cput(67.0,18){\tiny$\gamma_{2}$}
    \cput(76.0,21){\tiny$\gamma_{3}$}
    \cput(89.0,28){\tiny$\gamma_{4}$}
  \end{overpic}
  \caption{\emph{Left}: A planar discrete curve $\gamma: \Z \to \C$ with the 
  discrete curvature circle $k_i$ at edge $\gamma_i \gamma_{i + 1}$ (the points
  $p_{ab}$ correspond to $p_{\gamma_{i - 1} \gamma_{i}}$ etc). 
    \emph{Right}: Sampling a smooth curve $s: \R \to \R^2$ at $u + (2 k -
    1) \eps$ to obtain the discrete curve $\gamma: \Z \to \R^2$ with
    $\gamma_k = s(u + (2 k - 1) \eps)$. For our asymptotic analysis we let
    the real number $\eps$ go to zero.
    }
  \label{fig:curve}
\end{figure}

\subsection{Curvature for planar curves}
\label{subsec:curvature}

Let us consider the planar discrete curve $\gamma: \Z \to \C$ as
illustrated in Figure~\ref{fig:curve} (left).  We assume that any four
consecutive vertices of the curve are pairwise distinct. Then,
Theorem~\ref{thm:circle} guarantees the existence of a circle $k_i$
passing through 
$f(\gamma_{i - 1}, \gamma_{i}, \gamma_{i + 1}, \gamma_{i + 2})$,
$f(\gamma_{i}, \gamma_{i + 1}, \gamma_{i + 2}, \gamma_{i - 1})$,
$f(\gamma_{i + 1}, \gamma_{i + 2}, \gamma_{i - 1}, \gamma_{i})$,
$f(\gamma_{i + 2}, \gamma_{i - 1}, \gamma_{i}, \gamma_{i + 1})$.
We use this circle $k_i$ in the following definition of our discrete
curvature.

\begin{defn}
  Let $\gamma: \Z \to \C$ be a planar discrete curve. We call the circle
  $k_i$ \emph{(discrete) curvature circle at the edge $\gamma_{i}\gamma_{i
  + 1}$}, the inverse of its radius \emph{(discrete) curvature $\kappa_i$
  at the edge $\gamma_{i}\gamma_{i + 1}$}, and its center $m_i$
  \emph{(discrete) curvature center}. For an illustration see
  Figure~\ref{fig:curve} (left). 
\end{defn}

A `good' discrete definition `mimics' its smooth counterparts. Along these
lines we note that our discrete curvature circle is M{\"o}bius invariant
(Corollary~\ref{cor:moebinv}~\ref{cor:moebinvii}) as in the smooth case.
Furthermore, the curvature circle of a discrete curve with vertices on a
circle -- we could call it a discrete circle -- is the circumcircle
itself, as expected.  And Corollary~\ref{cor:insideoutside} implies that
the curvature circle separates the first and the last point of the four
points that are involved in its definition (see Figure~\ref{fig:curve}
left).  This resembles the local behavior of smooth curves in non\dash
vertex points where the curvature circle separates locally the curve into
an `inner' and an `outer' curve. 

In the following we continue our argumentation for the reasonableness of
this definition of the discrete curvature circle with \emph{asymptotic
analysis}. We will show that the discrete curvature circle (its radius and
center) of a sampled curve $s$ converges to the smooth curvature circle as
the sampling gets denser and denser.  For an illustration of the setting
of the following theorem see Figures~\ref{fig:curve} and~\ref{fig:asymptotic}.

\begin{figure}[t]
  \begin{overpic}[width=.9\textwidth]{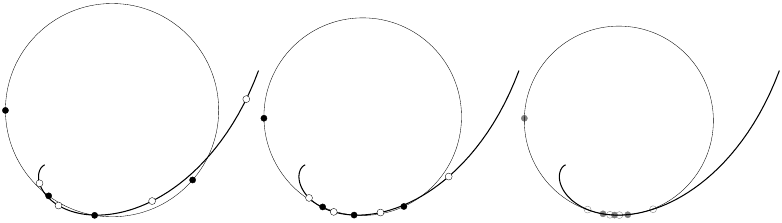}
  \end{overpic}
  \caption{The white points mark four sampled points which move closer and
  closer to a common point from left to right. The associated discrete
  curvature circle (exactly passing through the black points) converges at
  the same time to the smooth curvature circle.}
  \label{fig:asymptotic}
\end{figure}

\begin{thm}
  \label{thm:main}
  Let $s: \R \to \C$ be a sufficiently smooth planar curve and let $u,
  \eps \in \R$. Let further $\gamma: \Z \to \C$ be the planar discrete curve
  that samples the smooth curve $s$ in the following way:
  $$
    \gamma_k = \gamma(k) := s(u + (2 k - 1)\eps) 
    \qquad k \in \Z.
  $$ 
  Then the \emph{discrete} curvature $\kappa_0$ of $\gamma$ at the edge
  $\gamma_0 \gamma_1$ is a second order approximation of the \emph{smooth}
  curvature $\kappa$ of $s$ at $u$:
  \begin{equation}
    \label{eq:curvature}
    \kappa_0 = |\kappa(u)| + \landau(\eps^2).
  \end{equation}
  The center $m_0$ of the \emph{discrete} curvature circle $k_0$ of
  $\gamma$ converges to the center of the \emph{smooth} curvature circle
  of $s$ at the same rate,
  \begin{equation}
    \label{eq:maincenter}
      m_0 = s(u) + \frac{1}{\kappa(u)} N(u) +
    \landau(\eps^2),
  \end{equation}
  where $N$ denotes the unit normal vector along $s$.
  Furthermore, $p_{\gamma_0 \gamma_1} = f(\gamma_{-1}, \gamma_{0},
  \gamma_{1}, \gamma_{2})$ is even a third order approximation of $s(u)$,
  i.e.,
  \begin{equation}
    \label{eq:mainpkt}
    p_{\gamma_0 \gamma_1} = s(u) + \landau(\eps^3).
  \end{equation}
\end{thm}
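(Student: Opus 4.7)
The plan is a direct asymptotic expansion. The sampling $\gamma_k=s(u+(2k-1)\eps)$ places the four consecutive vertices at parameter values $u-3\eps,u-\eps,u+\eps,u+3\eps$, symmetric about the midpoint $u$ of the edge $\gamma_0\gamma_1$; this symmetry drives the crucial cancellations throughout.

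First I would Taylor-expand each $\gamma_k$ about $u$ to sufficiently high order (fifth suffices), and from these read off expansions of the differences $\gamma_i-\gamma_j$. Each such difference carries $s'(u)$ as a leading factor, with higher-order corrections involving $s''(u),s'''(u),\ldots$ The cross-ratio $q:=\crr(\gamma_1,\gamma_{-1},\gamma_0,\gamma_2)$ is M\"obius invariant, so its leading order is the cross-ratio of the parameter offsets $(+\eps,-3\eps,-\eps,+3\eps)$, which equals $4$; hence $\sqrt q=2+O(\eps)$ in the principal branch.

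Next I would substitute these expansions into
\begin{equation*}
  p_{\gamma_0\gamma_1}
  =f(\gamma_{-1},\gamma_0,\gamma_1,\gamma_2)
  =\frac{\gamma_1(\gamma_0-\gamma_{-1})\sqrt q+\gamma_0(\gamma_1-\gamma_{-1})}{(\gamma_0-\gamma_{-1})\sqrt q+(\gamma_1-\gamma_{-1})}.
\end{equation*}
At leading order $\sqrt q=2$, the denominator is $8\eps\,s'(u)+O(\eps^2)$ and the numerator is $8\eps\,s(u)s'(u)+O(\eps^2)$, so the ratio tends to $s(u)$. Pushing the expansion two more orders and using the symmetry of the stencil about $u$, the $O(\eps)$ and $O(\eps^2)$ corrections to the ratio must both vanish, leaving $p_{\gamma_0\gamma_1}=s(u)+O(\eps^3)$, which is~\eqref{eq:mainpkt}. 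Verifying these cancellations is the most delicate algebraic step.

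For~\eqref{eq:curvature} and~\eqref{eq:maincenter} I need more than one point of the circle $k_0$. I would repeat the expansion for two further insertion points, e.g.\ $p_{\gamma_1\gamma_2}=f(\gamma_0,\gamma_1,\gamma_2,\gamma_{-1})$ and $p_{\gamma_{-1}\gamma_0}=f(\gamma_2,\gamma_{-1},\gamma_0,\gamma_1)$; these are cyclic shifts of the same stencil, so the mechanism is identical but without the perfect symmetry that produced the $O(\eps^3)$ approximation, and only second-order expansions are needed. Their leading terms spread along the osculating circle at spacings of order $\eps$. Since by Theorem~\ref{thm:circle} all four insertion points lie on $k_0$, the center and radius of $k_0$ can be read off from the standard circumcircle formulas in $\C$ applied to three of these expanded points; after the higher-order cancellations one recovers the osculating radius $1/|\kappa(u)|$ and the osculating center $s(u)+N(u)/\kappa(u)$, each with $O(\eps^2)$ error. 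This yields~\eqref{eq:curvature} and~\eqref{eq:maincenter}. The concluding step here is the classical fact that the circle through three $O(\eps)$-spaced points on a smooth curve is a second-order approximation of the osculating circle at their midpoint; the $O(\eps^3)$ perturbation from replacing these by the $p$'s does not degrade that rate.

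The main obstacle is the bookkeeping: carrying the Taylor series far enough and organizing the expansion so that the symmetry-driven cancellations in the numerator and denominator of $f$ become manifest. Once these cancellations are in place, everything else reduces to standard circle-fitting asymptotics.
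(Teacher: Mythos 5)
Your derivation of \eqref{eq:mainpkt} is essentially the paper's: Taylor expansion of the differences, of the cross-ratio (leading value $4$), of its square root, and of $f$, with the symmetric stencil producing the cancellations (paper's Lemmas~\ref{lem:hilfslemmaeins}--\ref{lem:pbc}). The gap is in the step that is supposed to give \eqref{eq:curvature} and \eqref{eq:maincenter}. You fit the circle through $p_{\gamma_{-1}\gamma_0},\,p_{\gamma_0\gamma_1},\,p_{\gamma_1\gamma_2}$ (in the paper's notation $p_{ab},p_{bc},p_{cd}$), which are pairwise only $\landau(\eps)$ apart: they expand as $s\mp\sqrt3\,s'\eps+\tfrac32 s''\eps^2+\landau(\eps^3)$ and $s+\landau(\eps^3)$. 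You then invoke the classical three-point circle-fitting rate and claim that the $\landau(\eps^3)$ discrepancy between these points and genuine curve points ``does not degrade that rate.'' That stability claim is false at this scaling: for three points with chords of length $\landau(\eps)$ the circumcircle's curvature is governed by $\kappa\approx 8\cdot\mathrm{sagitta}/\mathrm{chord}^2$, so a transversal perturbation of size $\landau(\eps^3)$ shifts the fitted curvature and center by $\landau(\eps^3)/\eps^2=\landau(\eps)$, not $\landau(\eps^2)$. Consequently, with the points known only up to $\landau(\eps^3)$ (``only second-order expansions are needed''), your argument delivers $\kappa_0=|\kappa|+\landau(\eps)$ at best. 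The theorem is still true, because the four inserted points are exactly concyclic (Theorem~\ref{thm:circle}), but to reach $\landau(\eps^2)$ along your route you would have to expand $p_{ab},p_{bc},p_{cd}$ through their $\eps^3$ terms (remainders $\landau(\eps^4)$) and verify the resulting cancellation explicitly; it does not follow from the perturbation principle you cite.

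The paper avoids this conditioning problem by choosing a different triple on the same circle: $p_{bc}$, $p_{cd}$ and $p_{da}=f(c,d,a,b)$, whose limit is the \emph{finite} point $\tilde s=s-2s'^2/s''$ on the curvature circle (Lemma~\ref{lem:pda}, Proposition~\ref{prop:krkrpkt}). With one point staying at distance $\landau(1)$ from the two collapsing ones, the circumcenter formula is well conditioned, second-order expansions of the inserted points suffice, the center is identified with $s+\frac1\kappa N$ after rewriting $\kappa$ and $N$ in complex form, and the radius follows from $|p_{bc}-m_0|$. Either adopt that choice of points, or carry your expansions one order further; as written, the final step does not close.
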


Before we give a proof of this theorem we need a couple of preparatory
lemmas.  We consider w.l.o.g.\ the approximation point at $u = 0$.
To study the asymptotic behavior of the curvature notions based on our
sampled curve we need its Taylor expansion at $0$:
$$
  s(u) 
  = 
  s(0) + u s'(0) + \frac{u^2}{2} s''(0) + \frac{u^3}{6} s'''(0)
  + \landau(u^4).
$$
For the sake of brevity we will just write $s$ instead of $s(0)$, $s'$
instead of $s'(0)$, etc.
And until the end of this section we will use the following abbreviations
for those four points on which the curvature circle depends:
\begin{equation}
  \label{eq:epspkt}
  a := \gamma_{-1} = s(-3 \eps),
  \quad
  b := \gamma_0 = s(-\eps),
  \quad
  c := \gamma_1 = s(\eps),
  \quad
  d := \gamma_2 = s(3 \eps).
\end{equation}

We will very frequently encounter rational functions depending on $\eps$
for which we need its Taylor expansion. So at first, a general technical
lemma that can easily be verified.
\begin{lem}
  \label{lem:hilfslemmataylor}
  Let $x_i, y_i \in \C$ with $y_0 \neq 0$, then
  $$
    \frac{\sum_{k = 0}^2 x_i \eps^i}{\sum_{k = 0}^2 y_i \eps^i}
    =
    \frac{x_0}{y_0}
    + \frac{x_1 y_0 - x_0 y_1}{y_0^2} \eps
    + \frac{x_2 y_0^2 - x_1 y_0 y_1 + x_0 y_1^2 - x_0 y_0 y_2}{y_0^3} \eps^2
    + \landau(\eps^3).
  $$
\end{lem}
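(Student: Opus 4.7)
The plan is to expand the reciprocal of the denominator as a power series in $\eps$ up to order $\eps^2$ and then multiply by the numerator, collecting terms. Since $x_i, y_i \in \C$ the setting is commutative so standard geometric series manipulations apply.

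First, I would factor $y_0$ (which is nonzero) out of the denominator and write
$$
  \sum_{i = 0}^2 y_i \eps^i
  = y_0 (1 + z),
  \qquad
  z := \tfrac{y_1}{y_0}\eps + \tfrac{y_2}{y_0}\eps^2.
$$
Because $z = \landau(\eps)$, the geometric series gives $(1 + z)^{-1} = 1 - z + z^2 + \landau(\eps^3)$, and only the $\eps^2$ part of $z^2$ contributes, namely $\tfrac{y_1^2}{y_0^2}\eps^2$. Hence
$$
  \frac{1}{\sum_i y_i \eps^i}
  = \frac{1}{y_0}
  - \frac{y_1}{y_0^2}\eps
  + \frac{y_1^2 - y_0 y_2}{y_0^3}\eps^2
  + \landau(\eps^3).
$$

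Second, I would multiply this expansion by the numerator $x_0 + x_1 \eps + x_2 \eps^2$ and group by powers of $\eps$. The constant term is $x_0/y_0$; the coefficient of $\eps$ is $x_1/y_0 - x_0 y_1/y_0^2 = (x_1 y_0 - x_0 y_1)/y_0^2$; and the coefficient of $\eps^2$ comes from combining $x_2/y_0$, $-x_1 y_1/y_0^2$, and $x_0(y_1^2 - y_0 y_2)/y_0^3$, giving $(x_2 y_0^2 - x_1 y_0 y_1 + x_0 y_1^2 - x_0 y_0 y_2)/y_0^3$. All remaining contributions are absorbed into $\landau(\eps^3)$. This matches the claimed formula.

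There is really no obstacle here: commutativity in $\C$ makes the bookkeeping trivial, and the only thing to watch is that cross terms of order $\eps^3$ and higher (from $x_1 \eps \cdot \eps^2$-terms in the expansion of $1/(1+z)$, from $x_2 \eps^2 \cdot \eps$-terms, and from $z^3$) are correctly discarded. The lemma is purely a convenient bookkeeping device for the heavier asymptotic computations that follow.
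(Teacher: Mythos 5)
Your proof is correct: the factorization of $y_0$, the geometric-series expansion of $(1+z)^{-1}$ up to $\eps^2$, and the term-by-term multiplication reproduce exactly the stated coefficients. The paper offers no proof at all (it calls the lemma one "that can easily be verified"), and your computation is precisely the routine verification intended.
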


Our first task is to compute the Taylor expansion of the cross\dash ratio
which appears in the `inserting' construction~\eqref{eq:inserting}. The
following formula illustrates also the close connection between the
cross\dash ratio and the \emph{Schwarzian derivative} of $s$ which reads
$\frac{2 s' s''' - 3 f''^2}{2 f'^2}$, cf.~\cite{buecking-2018}.
\begin{lem}
  \label{lem:hilfslemmaeins}
  Let $a, b, c, d$ be the four consecutive points of the sampled curve as
  defined in~\eqref{eq:epspkt}. Then
  $$
    \crr(c, a, b, d) 
    = 
    4 + \frac{12 s''^2 - 8 s' s'''}{s'^2} \eps^2 + \landau(\eps^4),
    \quad
    \crr(b, d, a, c)
    = 
    \frac{4}{3} + \frac{-12 s''^2 + 8 s' s'''}{9 s'^2} \eps^2 + \landau(\eps^4).
  $$
\end{lem}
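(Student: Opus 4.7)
The plan is a direct Taylor expansion. I would first write out the relevant differences by expanding $s$ around $0$ to order $\eps^4$. Since the four sample points $-3\eps,-\eps,\eps,3\eps$ are symmetric around $0$, the six differences fall into two symmetry classes and have particularly clean expansions; for instance
\begin{align*}
c-a &= 4\eps s' - 4\eps^2 s'' + \tfrac{14}{3}\eps^3 s''' + \landau(\eps^4), \\
b-d &= -4\eps s' - 4\eps^2 s'' - \tfrac{14}{3}\eps^3 s''' + \landau(\eps^4),
\end{align*}
and similarly for $a-b,\ d-c,\ c-b,\ d-a$. The point is that within each symmetry pair the linear and cubic terms agree up to sign while the quadratic terms agree, which is exactly what is needed for the odd powers of $\eps$ to cancel when forming the cross\dash ratio.

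Next I would expand the numerator and denominator of each cross\dash ratio as polynomials in $\eps$. Writing $(c-a)(b-d)$ as $-(U-V+W)(U+V+W)$ with $U=4\eps s'$, $V=4\eps^2 s''$, $W=\tfrac{14}{3}\eps^3 s'''$, the product collapses to $-(U^2+2UW+W^2-V^2)$, giving
\[
(c-a)(b-d)=-16\eps^2 s'^2+\bigl(16\,s''^2-\tfrac{112}{3}s's'''\bigr)\eps^4+\landau(\eps^5),
\]
and the analogous computation for $(a-b)(d-c)$ produces $-4\eps^2 s'^2+(16\,s''^2-\tfrac{52}{3}s's''')\eps^4+\landau(\eps^5)$. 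The $-4\eps^2$ factor cancels, leaving a ratio of two power series in $\eps^2$ with constant terms $4s'^2$ and $s'^2$. A direct application of Lemma~\ref{lem:hilfslemmataylor} (with $x_1=y_1=0$, so the odd middle term drops out) then yields the first claim. For the second cross\dash ratio I would compute $(b-d)(a-c)$ by the same $(U+W)^2-V^2$ trick and $(d-a)(c-b)$ as a direct product of the two simple antisymmetric differences $d-a=6\eps s'+9\eps^3 s'''+\landau(\eps^5)$ and $c-b=2\eps s'+\tfrac{1}{3}\eps^3 s'''+\landau(\eps^5)$; Lemma~\ref{lem:hilfslemmataylor} again delivers the stated $4/3$ and $\eps^2$ coefficient.

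There is no real obstacle, only bookkeeping: one must carry the expansions of $s(\pm\eps), s(\pm 3\eps)$ at least through the $s'''$ term because the $\eps^2$ coefficient of the ratio picks up contributions both from the $s''^2$ products of the quadratic differences and from cross terms of the linear and cubic differences. The symmetry $\eps\mapsto-\eps$ interchanges $(a,b,c,d)$ with $(d,c,b,a)$, which fixes both $\crr(c,a,b,d)$ and $\crr(b,d,a,c)$ (using the standard identity $\crr(x,y,z,w)=\crr(w,z,y,x)$); this explains a priori why only even powers of $\eps$ appear and lets me truncate the final series confidently at $\landau(\eps^4)$.
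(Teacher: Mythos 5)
Your proposal is correct and follows essentially the same route as the paper: Taylor-expand the four differences, multiply to get numerator and denominator, cancel the common $-4\eps^2$, and apply Lemma~\ref{lem:hilfslemmataylor}; your intermediate expansions and the resulting $\eps^2$-coefficients agree with the paper's. Your additional symmetry remark (invariance of the cross-ratios under $\eps\mapsto-\eps$) is a nice explicit justification of the $\landau(\eps^4)$ remainder, which the paper obtains by simply carrying the product expansions to order $\eps^6$.
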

\begin{proof}
  We start by computing the factors of 
  $$
    \crr(c, a, b, d) = \frac{(c - a) (b - d)}{(a - b) (d - c)}
  $$
  in terms of the Taylor expansion:
  \begin{align*}
    c - a 
    &= 
    s(\eps) - s(-3 \eps)
    =
    \eps s' + \frac{\eps^2}{2} s'' + \frac{\eps^3}{6} s''' + \landau(\eps^4)
    - \big(-3 \eps s' + \frac{9 \eps^2}{2} s'' - \frac{9 \eps^3}{2} s''' +
    \landau(\eps^4)\big)
    \\
    &=
    4 \eps s' - 4 \eps^2 s'' + \frac{14}{3} \eps^3 s''' + \landau(\eps^4).
  \end{align*}
  And analogously we obtain
  \begin{align}
    a - b 
    &=
    -2 \eps s' + 4 \eps^2 s'' - \frac{13}{3} \eps^3 s''' +
    \landau(\eps^4),
    \label{eq:amb}
    \\ \nonumber
    b - d 
    &=
    -4 \eps s' - 4 \eps^2 s'' - \frac{14}{3} \eps^3 s''' +
    \landau(\eps^4),
    \\ \nonumber
    d - c 
    &=
    2 \eps s' + 4 \eps^2 s'' + \frac{13}{3} \eps^3 s''' +
    \landau(\eps^4).
  \end{align}
  Now the numerator of the cross\dash ratio expands to 
  $$
    (c - a) (b - d)
    =
    -16 s'^2 \eps^2 
    + \Big(
    16 s''^2 - \frac{112 s' s'''}{3} 
    \Big)
    \eps^4 
    + \landau(\eps^6),
  $$
  and the denominator to
  $$
    (a - b) (d - c)
    =
    -4 s'^2 \eps^2 
    + \Big(
    16 s''^2 - \frac{52 s' s'''}{3} 
    \Big)
    \eps^4 
    + \landau(\eps^6).
  $$
  Consequently, after canceling $-4 \eps^2$ the cross\dash ratio reads
  $$
    \crr(c, a, b, d) 
    = 
    \frac{4 s'^2 - (4 s''^2 - \frac{28}{3} s' s''') \eps^2 +
    \landau(\eps^4)}
    {s'^2 - (4 s''^2 - \frac{13}{3} s' s''') \eps^2 +
    \landau(\eps^4)},
  $$
  which, using Lemma~\ref{lem:hilfslemmataylor}, simplifies to 
  $$
    \crr(c, a, b, d) 
    = 
    4
    +
    \frac{12 s''^2 - 8 s' s'''}{s'^2} \eps^2 
    +
    \landau(\eps^4),
  $$
  the Taylor expansion of the first cross\dash ratio. The computations for
  the second one work analogously.
\end{proof}

\begin{lem}
  \label{lem:sqrtcr}
  Let $a, b, c, d$ be as in Lemma~\ref{lem:hilfslemmaeins}. Then
  $$
    \sqrt{\crr(c, a, b, d)} 
    = 
    2 + \frac{3 s''^2 - 2 s' s'''}{s'^2} \eps^2 + \landau(\eps^3).
  $$
\end{lem}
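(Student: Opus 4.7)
The plan is to reduce to the standard scalar Taylor expansion of $\sqrt{1+X}$ at $X=0$, starting from the expansion supplied by Lemma~\ref{lem:hilfslemmaeins}. Concretely, I would factor out the leading term and write
\[
  \crr(c,a,b,d) \;=\; 4\Bigl(1 + \tfrac{3s''^2 - 2 s' s'''}{s'^2}\eps^2 + \landau(\eps^4)\Bigr),
\]
so that $\sqrt{\crr(c,a,b,d)} = 2\sqrt{1+X}$ with $X = \landau(\eps^2)$ (a complex quantity, in general, since $s'$, $s''$, $s'''$ take values in $\C$).

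Next, I would appeal to the principal branch of the square root. For $\eps$ sufficiently small the argument $\crr(c,a,b,d)$ lies in an arbitrarily small neighborhood of the positive real number $4$, hence safely inside the domain of holomorphy of the principal branch (the slit complement of the nonpositive real axis). On this neighborhood the expansion $\sqrt{1+X} = 1 + X/2 + \landau(X^2)$ is valid. Substituting the expression for $X$ and using $X^2 = \landau(\eps^4)$ gives
\[
  \sqrt{\crr(c,a,b,d)} \;=\; 2\Bigl(1 + \tfrac{1}{2}\cdot\tfrac{3 s''^2 - 2 s' s'''}{s'^2}\eps^2 + \landau(\eps^4)\Bigr) \;=\; 2 + \tfrac{3 s''^2 - 2 s' s'''}{s'^2}\eps^2 + \landau(\eps^4),
\]
which is sharper than the stated bound and therefore implies the claim with remainder $\landau(\eps^3)$.

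The computation is essentially mechanical; the only point requiring a moment of care is the branch choice. Once one verifies (as above) that the cross\dash ratio stays in a neighborhood of $4$ bounded away from the negative real axis, the principal square root is analytic on that neighborhood and the scalar Taylor expansion of $\sqrt{1+X}$ may be used term by term, in line with the convention on principal square roots fixed in Section~\ref{subsec:quaternions}.
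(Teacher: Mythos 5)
Your argument is correct and is essentially the paper's own proof: the paper applies the scalar expansion $\sqrt{x_0 + x_2\eps^2 + \landau(\eps^3)} = \sqrt{x_0} + \frac{x_2}{2\sqrt{x_0}}\eps^2 + \landau(\eps^3)$ with $x_0 = 4$ directly to the result of Lemma~\ref{lem:hilfslemmaeins}, which is the same computation you perform after factoring out the $4$. Your added remarks on the principal branch (and the slightly sharper $\landau(\eps^4)$ remainder) are fine but do not change the route.
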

\begin{proof}
  This equation follows immediately from the general Taylor expansion for 
  $$
    \sqrt{x_0 + x_2 \eps^2 + \landau(\eps^3)}
    =
    \sqrt{x_0} + \frac{x_2}{2 \sqrt{x_0}} \eps^2 + \landau(\eps^3),
  $$
  and from Lemma~\ref{lem:hilfslemmaeins}.
\end{proof}

Now we are in the position to show the important lemma that guarantees that
$p_{\gamma_0 \gamma_1} = p_{bc}$ is a third order approximation of $s$.

\begin{lem}
  \label{lem:pbc}
  Let $a, b, c, d$ be as in Lemma~\ref{lem:hilfslemmaeins}. Then
  $$
    p_{bc} = s + \landau(\eps^3).
  $$
\end{lem}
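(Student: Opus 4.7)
The plan is a direct Taylor expansion of formula~\eqref{eq:inserting} applied to $a,b,c,d$ as defined in~\eqref{eq:epspkt}. All the ingredients have already been assembled: the expansions of the four differences
$a-b$, $c-a$, and (by the same pattern) $b-a$, were written down in the proof of Lemma~\ref{lem:hilfslemmaeins} up to order $\eps^3$, and $\sqrt{\crr(c,a,b,d)} = 2 + \frac{3 s''^2 - 2 s's'''}{s'^2}\eps^2 + \landau(\eps^3)$ is Lemma~\ref{lem:sqrtcr}. In addition, I would record the expansions $b = s - \eps s' + \tfrac{\eps^2}{2}s'' - \tfrac{\eps^3}{6}s''' + \landau(\eps^4)$ and $c = s + \eps s' + \tfrac{\eps^2}{2}s'' + \tfrac{\eps^3}{6}s''' + \landau(\eps^4)$ from the Taylor series of $s$.

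Next I would substitute these into the numerator $N := c(b-a)\sqrt{\crr(c,a,b,d)} + b(c-a)$ and the denominator $D := (b-a)\sqrt{\crr(c,a,b,d)} + (c-a)$ of~\eqref{eq:inserting}, and expand each to order $\eps^{4}$. Both $N$ and $D$ vanish at $\eps=0$ to first order: a short calculation gives $D = 8 s'\,\eps + \landau(\eps^2)$ and $N = 8 s s'\,\eps + \landau(\eps^2)$, which already yields the correct leading value $p_{bc} \to s$. After factoring out a common $\eps$, I would then apply Lemma~\ref{lem:hilfslemmataylor} with $x_0=8ss'$, $y_0=8s'$ to read off the coefficients of $\eps^0,\eps^1,\eps^2$ in $p_{bc}=N/D$, and verify that they equal $s,\,0,\,0$. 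This gives $p_{bc} = s + \landau(\eps^3)$.

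The main obstacle is bookkeeping: one must carry \emph{three} orders past the leading terms of $N$ and $D$ to conclude anything about $p_{bc}$ to order $\eps^2$, and the $\sqrt{\crr}$ factor only contributes an $\landau(\eps^2)$ correction to $2$, which, multiplied against the $\landau(\eps)$ factor $b-a$, still produces an $\landau(\eps^3)$ contribution that is needed for the $\eps^2$ coefficient in $N/D$. The cancellation of the $\eps^1$-coefficient in $p_{bc}-s$ is essentially symmetry ($b$ and $c$ sitting symmetrically around $u=0$), but the vanishing of the $\eps^2$-coefficient is the delicate point: it is precisely here that the choice of $\sqrt{\crr(c,a,b,d)}$ (rather than any other M{\"o}bius invariant weight) enters, as the correction $\frac{3s''^2-2s's'''}{s'^2}\eps^2$ to the value $2$ is exactly what is needed to kill the $s''$- and $s'''$-contributions that would otherwise appear at order $\eps^2$ in $N/D$.

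Once this is done, Lemma~\ref{lem:pbc} is established; the analogous estimate for $p_{da}$ (which will be needed to locate the center $m_0$ of $k_0$ in Theorem~\ref{thm:main}) follows by the same computation with a larger $\eps$-scale, or by reusing the cyclic-permutation argument already exploited in the proof of Theorem~\ref{thm:circle}.
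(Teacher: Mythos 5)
Your proposed computation is essentially the paper's own proof: expand the numerator and denominator of~\eqref{eq:inserting} in powers of $\eps$ (using the difference expansions from the proof of Lemma~\ref{lem:hilfslemmaeins} and the square-root expansion of Lemma~\ref{lem:sqrtcr}), cancel the common factor $\eps$, and apply Lemma~\ref{lem:hilfslemmataylor}; the paper indeed finds that the numerator equals $s$ times the denominator through order $\eps^3$, so the $\eps$- and $\eps^2$-coefficients of the quotient vanish and $p_{bc}=s+\landau(\eps^3)$. So the method is sound and would go through as you describe.

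One correction to your commentary, though: the role you assign to the $\eps^2$-correction of $\sqrt{\crr(c,a,b,d)}$ is not right. Writing $b=s+\delta_b$, $c=s+\delta_c$, the numerator minus $s$ times the denominator is exactly $\delta_c(b-a)\sqrt{\crr}+\delta_b(c-a)$, with $\delta_c(b-a)=2s'^2\eps^2-3s's''\eps^3+\landau(\eps^4)$ and $\delta_b(c-a)=-4s'^2\eps^2+6s's''\eps^3+\landau(\eps^4)$. Hence the cancellation through order $\eps^3$ uses only $\sqrt{\crr}=2+0\cdot\eps+\landau(\eps^2)$: the correction $\frac{3s''^2-2s's'''}{s'^2}\eps^2$ multiplies a factor that is already $\landau(\eps^2)$ and therefore influences $p_{bc}$ only at order $\eps^3$; in the language of Lemma~\ref{lem:hilfslemmataylor} it enters $x_2$ and $y_2$ only through the relation $x_2=s\,y_2$ and so drops out of the $\eps^2$-coefficient of the quotient. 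Any weight of the form $2+\landau(\eps^2)$ in place of $\sqrt{\crr}$ would yield $p_{bc}=s+\landau(\eps^3)$; what is delicate is the value $2$ and the absence of a linear term, not the precise $\eps^2$-term. Also, your closing remark that $p_{da}$ follows ``by the same computation with a larger $\eps$-scale'' is misleading: $p_{da}=f(c,d,a,b)$ converges to the different point $s-2s'^2/s''$ (Lemma~\ref{lem:pda}), so it needs its own, analogous expansion. Neither point affects the validity of your proof of Lemma~\ref{lem:pbc} itself.
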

\begin{proof}
  We have to compute
  $$
    p_{bc} = f(a, b, c, d) =
    \frac{c (b - a) \sqrt{\crr(c, a, b, d)} + b (c - a)}
         {(b - a) \sqrt{\crr(c, a, b, d)} + (c - a)},
  $$
  and start with its components:
  \begin{align*}
    c (b - a)
    &\overset{\eqref{eq:amb}}{=}
    \big(
    s + \eps s' + \frac{\eps^2}{2} s'' + \frac{\eps^3}{6} +
    \landau(\eps^4)
    \big)
    \big(
    2 \eps s' - 4 \eps^2 s'' + \frac{13}{3} \eps^3 s''' +
    \landau(\eps^4)
    \big)
    \\
    &=
    2 \eps s s' 
    + (2 s'^2 - 4 s s'') \eps^2 
    + (\frac{13}{3} s' s''' - 3 s' s'') \eps^3 
    + \landau(\eps^4),
  \end{align*}
  and analogously
  \begin{equation*}
    b (c - a)
    =
    4 s s' \eps - 4 (s'^2 + s s'') \eps^2
    + (6 s' s'' + \frac{14 s s'''}{3}) \eps^3
    + \landau(\eps^4).
  \end{equation*}
  Putting numerator and denominator together also using
  Lemma~\ref{lem:sqrtcr} we obtain 
  $$
    p_{bc} 
    = 
    \frac{8 s s' \eps 
    - 12 s s'' \eps^2 
    + \big(
      \frac{6 s''^2}{s'}
      +
      \frac{28 s'''}{3}
    \big) s \eps^3
    + \landau(\eps^4)}
    {8 s' \eps 
    - 12 s'' \eps^2 
    + \big(
      \frac{6 s''^2}{s'}
      +
      \frac{28 s'''}{3}
    \big) \eps^3
    + \landau(\eps^4)},
  $$
  and after canceling $\eps$, applying Lemma~\ref{lem:hilfslemmataylor}
  concludes the  proof.
\end{proof}

Note that Lemma~\ref{lem:pbc} proves Equation~\eqref{eq:mainpkt} in
Theorem~\ref{thm:main} which says that $= p_{bc}$ converges to $s$ at
third order.  The following lemma can be verified analogously to
Lemma~\ref{lem:pbc}.

\begin{lem}
  \label{lem:pda}
  Let $a, b, c, d$ be as in~\eqref{eq:epspkt}. Then
  \begin{align*}
    p_{ab} 
    &= 
    s - \sqrt{3} s' \eps + \frac{3 s''}{2} \eps^2 + \landau(\eps^3),
    \\
    p_{cd} 
    &= 
    s + \sqrt{3} s' \eps + \frac{3 s''}{2} \eps^2 + \landau(\eps^3),
    \\
    p_{da} 
    &= 
    s - \frac{2 s'^2}{s''} + 
    \Big(
      5 s'' - \frac{20 s' s'''}{3 s''}
    \Big) \eps^2
    + \landau(\eps^3).
  \end{align*}
\end{lem}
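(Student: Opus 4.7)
The plan is to treat all three expansions uniformly by mimicking the template used in the proof of Lemma~\ref{lem:pbc}, then to deal separately with the genuine obstacle that appears only in the $p_{da}$ case. First I would write each point explicitly via~\eqref{eq:inserting}: using the cyclic shifts $p_{ab}=f(d,a,b,c)$, $p_{cd}=f(b,c,d,a)$, $p_{da}=f(c,d,a,b)$, the cross\dash ratios that appear inside the square roots are $\crr(b,d,a,c)$ (for $p_{ab}$ and $p_{cd}$) and $\crr(a,c,d,b)=\crr(c,a,b,d)$ (for $p_{da}$, using the symmetry $\crr(b,a,d,c)=\crr(a,b,c,d)$ invoked already in Theorem~\ref{thm:circle}). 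Lemma~\ref{lem:hilfslemmaeins} and (a verbatim application of) Lemma~\ref{lem:sqrtcr} then give
\[
\sqrt{\crr(c,a,b,d)}=2+\tfrac{3s''^2-2s's'''}{s'^2}\eps^2+\landau(\eps^3),\qquad
\sqrt{\crr(b,d,a,c)}=\tfrac{2}{\sqrt{3}}+\tilde\alpha\,\eps^2+\landau(\eps^3),
\]
for an explicit $\tilde\alpha$; the appearance of $\sqrt{4/3}=2/\sqrt{3}$ in the second square root is precisely what produces the $\sqrt{3}\,s'\eps$ terms in the expansions of $p_{ab}$ and $p_{cd}$.

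Next I would assemble the requisite Taylor expansions of the point differences. The proof of Lemma~\ref{lem:hilfslemmaeins} already supplies $c-a$, $a-b$, $b-d$, $d-c$ to order $\eps^3$; the missing differences $a-c$, $d-b$, $a-d$, $c-b$, $b-a$ are obtained routinely from the Taylor expansion of $s$ at $0$. Substituting these together with the square-root expansions into the defining fraction and then applying Lemma~\ref{lem:hilfslemmataylor} yields the stated coefficients. For $p_{ab}=f(d,a,b,c)$ and $p_{cd}=f(b,c,d,a)$ both numerator and denominator are of order $\eps$, so the limit is $s$ and the first correction is linear in $\eps$; the sign difference between the two expansions comes from the fact that passing from $p_{ab}$ to $p_{cd}$ corresponds to the substitution $\eps\mapsto-\eps$ in the pattern of sample positions $(-3,-1,1,3)\mapsto(-1,1,3,-3)$ inside the formula, so one expects the expansions to be related by reflection about $s$ up to the quadratic term.

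The one place where a genuine obstacle appears is $p_{da}$. Here a glance at the formula for $f(c,d,a,b)$ suggests that numerator and denominator are both $\landau(\eps)$, which would give a finite limit — but the $\eps^1$ terms cancel exactly, both in the numerator and in the denominator, and the leading behaviour is in fact $\landau(\eps^2)$ on top and bottom. The cancellation is what produces the $s''$ in the denominator of the limiting value $s-2s'^2/s''$, and it is the reason the formula degenerates at inflection points. The main work is therefore the careful bookkeeping of the $\eps^2$ and $\eps^3$ coefficients after the cancellation: one has to retain all contributions from $\sqrt{\crr(c,a,b,d)}$ up to order $\eps^2$ and from $a,d,a-c,d-c$ up to order $\eps^3$, so that after dividing by the $\eps^2$-leading denominator via Lemma~\ref{lem:hilfslemmataylor} one obtains both the constant term $s-2s'^2/s''$ and the precise $\eps^2$-correction $\bigl(5s''-\tfrac{20 s's'''}{3s''}\bigr)\eps^2$ claimed in the statement. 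Once this careful expansion is done, all three equalities follow.
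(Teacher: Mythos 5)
Your reduction of all three expansions to the template of Lemma~\ref{lem:pbc} is exactly what the paper intends (its proof of Lemma~\ref{lem:pda} is literally ``analogously to Lemma~\ref{lem:pbc}''), and your identification of the relevant cross\dash ratios, the value $\sqrt{4/3}=2/\sqrt{3}$, and the bookkeeping for $p_{ab}$ and $p_{cd}$ are fine: there numerator and denominator are genuinely of order $\eps$, so the differences to order $\eps^3$ together with $\sqrt{\crr(b,d,a,c)}$ through $\eps^2$ determine the quotient mod $\landau(\eps^3)$. (Your reversal heuristic is in fact exact: $f(A,B,C,D)=f(D,C,B,A)$, hence $p_{cd}(\eps)=p_{ab}(-\eps)$ and $p_{da}(-\eps)=p_{da}(\eps)$.)

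The gap is in your order count for $p_{da}$. Writing $p_{da}=\bigl(a(d-c)\sqrt q+d(a-c)\bigr)\bigl((d-c)\sqrt q+(a-c)\bigr)^{-1}$ with $q=\crr(c,a,b,d)$, both numerator and denominator start at $\eps^2$, as you note; but then the quotient mod $\landau(\eps^3)$ is governed by their coefficients up to and including $\eps^4$ (apply Lemma~\ref{lem:hilfslemmataylor} after cancelling $\eps^2$). Retaining $a,d,a-c,d-c$ only to order $\eps^3$ and $\sqrt q$ only to order $\eps^2$ leaves the $\eps^4$ coefficients of numerator and denominator undetermined, hence an undetermined $\landau(\eps^2)$ contribution to $p_{da}$: the $\eps^4$ terms of $d-c$ and $a-c$ (each $\tfrac{10}{3}s''''\eps^4$) enter, and one also needs that $\sqrt q$ has no $\eps^3$ term (true because $q$ is even in $\eps$, but not supplied by Lemma~\ref{lem:sqrtcr}, whose error is only $\landau(\eps^3)$). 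These dropped terms do not cancel: carrying them through adds $\tfrac{5 s'^2 s''''}{3 s''^2}\eps^2$ to the result. A quick check: for $s(t)=e^{it}$ sampled at $\pm\eps,\pm3\eps$ one finds $p_{da}=-1$ identically in $\eps$, whereas $5s''-\tfrac{20 s' s'''}{3 s''}=\tfrac{5}{3}\neq 0$ there; the discrepancy is exactly the fourth\dash derivative term. So the truncation you propose establishes only the constant term $s-2s'^2/s''$ and the vanishing $\eps$-term (which is all the paper uses later), not the stated $\eps^2$ coefficient --- and it indicates that the printed $\eps^2$ coefficient of $p_{da}$ is itself missing this $s''''$ contribution, so a correct proof must either carry the fourth\dash order terms or restate that coefficient.
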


Lemma~\ref{lem:pda} implies that $p_{da} = p_{\gamma_{i + 2} \gamma_{i -
1}}$ is a second order approximation of $s - \frac{2 s'^2}{s''}$. This
point $s - \frac{2 s'^2}{s''}$ has an interesting geometric interpretation
which we detail in Proposition~\ref{prop:krkrpkt}.

After putting these preparatory lemmas in place we can finally turn to the
proof of our result on the limit of the curvature circle.

\begin{proof}[Proof of Theorem \ref{thm:main}]
  Let us first compute the center $m_0$ of the discrete curvature
  circle $k_0$. Generally, the circumcenter of a triangle $a, b, c \in
  \C^2$ is given by
  $$
  \frac{
    a (\|b\|^2 - \|c\|^2)
    +
    b (\|c\|^2 - \|a\|^2)
    +
    c (\|a\|^2 - \|b\|^2)
  }{
  (a - c)
  (\overline{b - c}) 
  -
  (\overline{a - c})
  (b - c) 
  }.
  $$
  In our case we want to compute the circumcenter of the four concyclic
  points $p_{ab}, p_{bc}, p_{cd}, p_{da}$ from which we choose the three
  points $p_{bc}, p_{cd}, p_{da}$ to insert them in the formula above. Let us
  start with the denominator $\mathcal{D}$: 

  \mz{\mathcal{D} =}{
      (p_{da} - p_{cd}) (\overline{p_{bc} - p_{cd}})
      -
      (\overline{p_{da} - p_{cd}}) (p_{bc} - p_{cd})
  }

  \mz{=}{
    \big(-\frac{2 s'^2}{s''} - \sqrt{3} s' \eps + \landau(\eps^2)\big)
    \big(-\sqrt{3} \bar s' \eps - \frac{3 \bar s''}{2} \eps^2 
    + \landau(\eps^3)\big)
    -
    \overline{(\ldots)}(\ldots)
  }

  \mz{=}{
    \frac{2 \sqrt{3} |s'|^2 (s' \bar s'' - \bar s' s'')}{|s''|^2} \eps
    +
    \frac{3 (s' \bar s'' - \bar s' s'') (s' \bar s'' + \bar s' s'')}
    {|s''|^2} \eps^2 + \landau(\eps^3).
  }
  
  We compute the numerator $\mathcal{N}$ in the same way. After a lengthy 
  computation we get

  \mz{\mathcal{N} =}{
    \frac{4 \sqrt{3} |s'|^2 (|s'|^2 s' - \frac{1}{2} s (s'' \bar s' - \bar s''
    s'))}{|s''|^2} \eps
    +
    \frac{6 (s' \bar s'' \!+\! \bar s' s'') (|s'|^2 s' - \frac{1}{2} s (s''
    \bar s' - \bar s''
    s'))}{|s''|^2} \eps^2 + \landau(\eps^3).
  }

  Now, Lemma~\ref{lem:hilfslemmataylor} yields for the center $m_0$ of the
  discrete curvature circle $k_0$
  \begin{equation}
    \label{eq:nd}
    m_0
    =
    \frac{\mathcal{N}}{\mathcal{D}} 
    =
    s + \frac{2 s'^2 \bar s'}{s' \bar s'' - \bar s' s''} +
    \landau(\eps^2).
  \end{equation}
  We need to relate the discrete curvature circle to its smooth
  counterpart. In order to do that, we rewrite the
  curvature~\eqref{eq:curvaturetwod} in terms of complex functions: 
  The determinant of a matrix consisting of two column vectors $a, b \in
  \R^2$ is the same as $\frac{i}{2} (a \bar b - \bar a b)$ when $a$ and
  $b$ are expressed as complex numbers. Consequently, the curvature for a
  curve $s: \R \to \C$ and its unit normal vector $N$ can be written in
  the form 
  \begin{equation}
    \label{eq:detcurvature}
    \kappa = \frac{i (s' \bar s'' - \bar s' s'')}{2 |s'|^3},
		\quad\text{and}\quad
    N = i \frac{s'}{|s'|},
  \end{equation}
  as multiplication with $i$ corresponds to a rotation about the angle
  $\pi/2$.
  So, we use these notions to rewrite~\eqref{eq:nd}:
  $$
    m_0
    =
    s + \frac{|s'|^2 s' i}{\frac{i}{2}(s' \bar s'' - \bar s' s'')} 
    + \landau(\eps^2)
    =
    s + \frac{|s'|^3}{\det(s', s'')} 
    i \frac{s'}{|s'|}
    + \landau(\eps^2)
    =
    s +  \frac{1}{\kappa} N + \landau(\eps^2).
  $$
  Consequently, the distance between the center $m_0$ of the discrete
  curvature circle $k_0$ and the center $s + \frac{1}{\kappa} N$ of the
  smooth curvature circle is of magnitude $\landau(\eps^2)$.

  Let us now compute the radius of the discrete curvature circle $k_0$ by
  computing the distance of its center $m_0$ to a point on the circle,
  e.g., $p_{bc}$:
  $$
    \frac{1}{\kappa_0}
    =
    |p_{bc} - m_0| 
    =
    \Big|\underbrace{p_{bc} - \big(s}_{\landau(\eps^3)} 
    + \frac{1}{\kappa} N \big)
    + \underbrace{\big(s + \frac{1}{\kappa} N \big) - m_0}_{\landau(\eps^2)}
    \Big| 
    =
    \Big|\frac{1}{\kappa} N + \landau(\eps^2)\Big| 
    =
    \frac{1}{|\kappa|} + \landau(\eps^2),
  $$
  which implies Equation~\eqref{eq:curvature}.
  Equation~\eqref{eq:mainpkt} follows from Lemma~\ref{lem:pbc}.
\end{proof}

\begin{figure}[t]
  \begin{minipage}{.35\textwidth}
  \begin{overpic}[width=\textwidth]{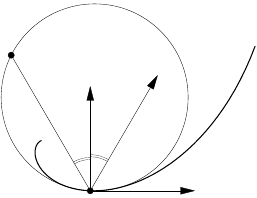}
    \put(-1,54){$\tilde s$}
    \put(32,-3){$s$}
    \put(61,40){$s''$}
    \put(37,41){$N$}
    \put(73,5){$T$}
  \end{overpic}

  \begin{overpic}[width=.7\textwidth]{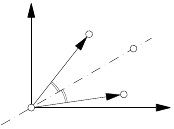}
    \put(74,18){$z$}
    \put(79,40){$a$}
    \put(39,58){$\frac{a}{\bar a} \bar z$}
  \end{overpic}
  \end{minipage}
  \hfill
  \begin{minipage}{.64\textwidth}
  \begin{overpic}[width=\textwidth]{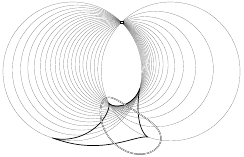}
    \put(67,3){$s$}
    \put(32,2){$e$}
  \end{overpic}
  \end{minipage}
  \caption{\emph{Top-left:} Smooth curve $s$ with curvature circle. The point
  $\tilde s = s - \frac{2 s'^2}{s''}$ on the curvature circle is
  M{\"o}bius invariantly connected to the parametrization of the curve.
  The vectors $\tilde s - s$ and $s''$ are symmetric, up to length, with
  respect to the normal vector $N$.
  \emph{Bottom-left:} Reflection. A complex point $z$ gets reflected
  to $\frac{a}{\bar a} \bar z$ along an axis through the origin and $a$.
  \emph{Right:} Illustration of a curve $s$ together with a family of
  circles which are orthogonal to $s$ and which pass through $\tilde
  s$.  All these circles pass through a fixed point which implies that
  the parametrization $s$ is M{\"o}bius equivalent to an arc length
  parametrization $\hat s$.
  The circles envelope the M{\"o}bius transformation of the evolute $e$ of
  $\hat s$.
  }
  \label{fig:moebiuspkt}
\end{figure}

In Lemma~\ref{lem:pda} we saw that the point $p_{da} = p_{\gamma_{i + 2}
\gamma_{i - 1}}$ is a second order approximation of $s(u) - \frac{2
s'^2(u)}{s''(u)}$.  In the following proposition we study the geometric
meaning of that special point.

\begin{prop}
  \label{prop:krkrpkt}
  Let $s: \R \to \C$ be a smooth curve. Then for all $u \in \R$ the point
  $$
  \tilde s(u) := s(u) - \frac{2 s'^2(u)}{s''(u)}
  $$ 
  is a point on the curvature circle at $s(u)$ (see
  Figure~\ref{fig:moebiuspkt} top\dash left).
  The curve $\tilde s$ is M{\"o}bius-invariantly connected to the
  \emph{parametrization} of $s$. 
  Furthermore, the normal vector $N$ of $s$ is the angle bisector of
  $\tilde s - s$ and the second derivative vector $s''$ (see
  Figure~\ref{fig:moebiuspkt} top\dash left).
\end{prop}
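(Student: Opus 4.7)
The plan decomposes the proposition into its three assertions and I would treat them in order.

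For the first assertion — that $\tilde s(u)$ lies on the curvature circle at $s(u)$ — I would argue by direct computation in $\C$. The center of the smooth curvature circle is $m = s + \tfrac{1}{\kappa} N$, which via the complex formulas in~\eqref{eq:detcurvature} (and essentially the closed form appearing in~\eqref{eq:nd}) takes the shape $m = s + \frac{2 s'^2 \bar s'}{s' \bar s'' - \bar s' s''}$. Combining $\tilde s - m$ over the common denominator $s''(s' \bar s'' - \bar s' s'')$, the numerator collapses to $-2 s'^3 \bar s''$, so that $\tilde s - m = \frac{-2 s'^3 \bar s''}{s''(s' \bar s'' - \bar s' s'')}$. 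Taking moduli and using $|s' \bar s'' - \bar s' s''| = 2|\det(s',s'')|$ together with~\eqref{eq:detcurvature} yields $|\tilde s - m| = \frac{2|s'|^3}{|s' \bar s'' - \bar s' s''|} = \frac{1}{|\kappa|}$, and hence $\tilde s$ lies on the curvature circle.

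For the claim that $\tilde s$ is M\"obius invariantly tied to the parametrization of $s$, I would exploit the discrete point $p_{da} = f(c,d,a,b)$, which by Lemma~\ref{lem:crequation} and Corollary~\ref{cor:moebinv}\ref{cor:moebinvii} is M\"obius invariant because it is built purely from cross\dash ratios. For any M\"obius transformation $\mu$, sampling $\mu\circ s$ at $u+(2k-1)\eps$ produces exactly $\mu(\gamma_k)$, so discrete invariance gives $p_{da}\bigl(\mu(\gamma_{-1}),\mu(\gamma_0),\mu(\gamma_1),\mu(\gamma_2)\bigr) = \mu\bigl(p_{da}(\gamma_{-1},\gamma_0,\gamma_1,\gamma_2)\bigr)$ for every $\eps$. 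Applying Lemma~\ref{lem:pda} separately to $s$ and to $\mu\circ s$ and letting $\eps\to 0$ on each side would then produce $\widetilde{\mu\circ s}(u) = \mu(\tilde s(u))$, which is precisely the stated M\"obius invariance of the construction $s \mapsto \tilde s$.

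For the bisector claim I would use the complex reflection formula indicated in Figure~\ref{fig:moebiuspkt} (bottom\dash left): reflection across the line through $0$ spanned by $a\in\C$ sends $z \mapsto \frac{a}{\bar a}\bar z$. Taking $a = N = i s'/|s'|$, so that $a/\bar a = -s'/\bar s'$, the reflected image of $s''$ becomes $-\tfrac{s'}{\bar s'}\bar s''$. Dividing this by $\tilde s - s = -\tfrac{2 s'^2}{s''}$ simplifies to $\frac{|s''|^2}{2|s'|^2}$, a positive real number, so the reflection of $s''$ is a positive scalar multiple of $\tilde s - s$, which is exactly what it means for $N$ to be the angle bisector between $s''$ and $\tilde s - s$. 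The step I expect to be most delicate is the limiting argument in the M\"obius\dash invariance part, where one must check that the discrete identity $\mu \circ p_{da} = p_{da}\circ\mu$ survives the limit $\eps\to 0$ at a single point; this reduces to continuity of $\mu$ away from its pole combined with Lemma~\ref{lem:pda}, while the other two parts are essentially mechanical complex\dash algebra once the right expression for $m$ and the right reflection formula are in place.
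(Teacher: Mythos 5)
Your three computations are all correct, and two of them coincide with the paper's own proof: the on-circle claim is the same complex-arithmetic verification that $|\tilde s-(s+\tfrac1\kappa N)|=\tfrac1{|\kappa|}$ (you merely organize it as $\tilde s-m=\frac{-2s'^3\bar s''}{s''(s'\bar s''-\bar s's'')}$ before taking moduli), and the bisector claim is the identical reflection argument $z\mapsto\frac{a}{\bar a}\bar z$ with $a=N=is'/|s'|$, only applied to $s''$ instead of to $\tilde s-s$ (reflection being an involution, this is the same computation). Where you genuinely diverge is the M\"obius-invariance part. The paper proves the identity $M\circ\tilde s=M\circ s-\frac{2(M\circ s)'^2}{(M\circ s)''}$ directly on generators: it is trivial for translations, rotations and scalings, and a short algebraic computation for $M(z)=1/z$. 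You instead pass through the discrete theory: M\"obius invariance of the insertion rule (Lemma~\ref{lem:crequation}, Corollary~\ref{cor:moebinv}) gives $p_{da}\circ\mu=\mu\circ p_{da}$ for every sampling width $\eps$, and Lemma~\ref{lem:pda}, applied once to $s$ and once to $\mu\circ s$, identifies the two limits as $\mu(\tilde s(u))$ and $\widetilde{\mu\circ s}(u)$. This is a valid and rather pleasing argument (it explains \emph{why} the smooth statement is true, as a shadow of the discrete one), but it buys its elegance at the cost of extra hypotheses and machinery: you need Lemma~\ref{lem:pda} for the transformed curve as well, hence $(\mu\circ s)''(u)\neq 0$, the sample points and $\tilde s(u)$ away from the pole of $\mu$, the four samples in the admissible (non-degenerate) configuration for all small $\eps$, and the invariance of $f$ also under anti-holomorphic M\"obius maps (where cross-ratios get conjugated) -- all tacit but should be acknowledged. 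The paper's generator computation is more elementary and self-contained, needs no asymptotics, and handles the reflection generator just as trivially; your route is more conceptual but strictly depends on the discrete lemmas already being in place.
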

\begin{proof}
  To show that $\tilde s$ lies on the curvature circle we show
  $\big|\tilde s - (s + \frac{1}{\kappa} N) \big| = \frac{1}{|\kappa|}$:
  \begin{align*}
    \Big|\tilde s - \Big(s + \frac{1}{\kappa} N\Big) \Big| 
    =&
    \Big|\frac{2 s'^2}{s''} + \frac{1}{\kappa} N\Big| 
    =
    \Big|\frac{2 s'^2}{s''} + \frac{|s'|^3}{\frac{i}{2} (s' \bar s'' -
    \bar s' s'')} i \frac{s'}{|s'|} \Big| 
    \\
    =&
    \Big|\frac{2 s'^3 \bar s'' - 2 s'^2 \bar s' s'' + 2 s'^2 \bar s' s''}
    {s'' (s' \bar s'' - \bar s' s'')} \Big| 
    =
    \frac{|\bar s''|}{|s''|}
    \frac{|s'|^3}{|\frac{i}{2} (s' \bar s'' - \bar s' s'')|}
    =
    \frac{1}{|\kappa|}.
  \end{align*}
  Next we show the M{\"o}bius invariant property of $\tilde s$. For that,
  let $M$ be a M{\"o}bius transformation. We have to show 
  $$
  M \circ \tilde s = M \circ s - \frac{2 (M \circ s)'^2}{(M \circ s)''}.
  $$
  This equation holds trivially for translations, rotations and scalings.
  So the only thing left to show is that it is also true for inversions $M(z)
  = 1/z$. We start with the right hand side:
  $$
    \frac{1}{s} 
    - 
    \frac{2 (\frac{1}{s})'^2}{(\frac{1}{s})''}
    =
    \frac{1}{s} 
    + 
    \frac{2 \frac{s'^2}{s^4}}{\frac{s'' s^2 - 2 s s'^2}{s^4}}
    =
    \frac{s''}{s'' s - 2 s'^2}
    =
    \frac{1}{s - \frac{2 s'^2}{s''}}
    =
    \frac{1}{\tilde s}
    =
    M \circ \tilde s.
  $$

  Now we show the symmetry property.  We have to show that $\frac{\tilde s
  - s}{|\tilde s - s|}$ gets reflected to $\frac{s''}{|s''|}$ at the
  symmetry axis $N$.  The reflection of a complex number $z$ on an axis
  with direction $a$ (see Figure~\ref{fig:moebiuspkt} bottom\dash left) is
  expressible in complex numbers by $\frac{a}{\bar a} \bar z$. So we have
  to show
  $$
    \frac{N}{\overline{N}} 
    \overline{\frac{\tilde s - s}{|\tilde s - s|}}
    =
    \frac{s''}{|s''|}.
  $$
  This equation is equivalent to
  $$
    \frac{\phantom{-}i s'}{-i \bar s'}
    \frac{-\frac{2 s'^2}{s''}}{|\frac{2 s'^2}{s''}|}
    =
    \frac{s''}{|s''|}
    \quad\Leftrightarrow\quad
    \frac{s'}{\bar s'}
    \frac{\bar s'^2 |s''|}{\bar s'' |s'^2|}
    =
    \frac{s''}{|s''|}
    \quad\Leftrightarrow\quad
    \frac{s'}{\bar s'}
    \frac{\bar s'^2}{\bar s'' s' \bar s'}
    =
    \frac{s''}{s'' \bar s''},
  $$
  which is true and therefore implies the symmetry property.
\end{proof}

\begin{rem}
  If $s$ is parametrized proportionally to arc length, then $s \tilde s$
  is a diameter of the curvature circle.
\end{rem}

\begin{cor}
  \label{cor:arclength}
  A parametrized curve is M{\"o}bius equivalent to a arc length
  parametrized curve if and only if for all $u \in \R$ the circles
  orthogonal to the curvature circle and passing through $s$ and $\tilde
  s$ intersect in one common point (see Figure~\ref{fig:moebiuspkt} right).
\end{cor}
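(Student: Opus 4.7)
The plan is to reduce the statement to the Remark (and its converse) by exploiting the M{\"o}bius invariance of the map $s \mapsto \tilde s$ established in Proposition~\ref{prop:krkrpkt}. The first observation is that, for each $u$, the data $(K_u, s(u), \tilde s(u))$ --- a circle together with two of its points --- determine a unique M{\"o}bius circle $C_u$ (proper circle or straight line) orthogonal to $K_u$ and passing through both points: if $s(u)$ and $\tilde s(u)$ are not antipodal on $K_u$ the centre of $C_u$ is the intersection of the two tangents to $K_u$ at those points, whereas if $s(u), \tilde s(u)$ are antipodal then $C_u$ degenerates to the diameter line through them. In particular, $C_u$ is a line if and only if $s(u)$ and $\tilde s(u)$ are diametrically opposite on $K_u$, since a straight line through two points of a circle is orthogonal to that circle precisely when it is a diameter.

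For the direction $(\Rightarrow)$, assume that $\hat s := M \circ s$ is parametrized proportionally to arc length for some M{\"o}bius transformation $M$. By the Remark, $\hat s(u)$ and $\tilde{\hat s}(u)$ are antipodal on the curvature circle $\hat K_u$ of $\hat s$, so every $\hat C_u$ is a straight line and the family $\{\hat C_u\}_{u \in \R}$ trivially shares the common point $\infty$. Since $M^{-1}$ is M{\"o}bius it preserves orthogonality and the class of generalized circles, and by Proposition~\ref{prop:krkrpkt} it intertwines the operation $s \mapsto \tilde s$; therefore $M^{-1}(\hat C_u) = C_u$, and all $C_u$ pass through the finite common point $M^{-1}(\infty)$.

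For the converse $(\Leftarrow)$, suppose the family $\{C_u\}$ shares a common point $P$, choose a M{\"o}bius transformation $M$ with $M(P) = \infty$, and set $\hat s := M \circ s$. By the same M{\"o}bius-invariance argument, $M(C_u)$ is orthogonal to $\hat K_u$, passes through $\hat s(u)$ and $\tilde{\hat s}(u)$, and also passes through $\infty$, so it is a line; by the criterion from the first paragraph, $\hat s(u)$ and $\tilde{\hat s}(u)$ must be antipodal on $\hat K_u$, i.e.\ $\tilde{\hat s}(u) - \hat s(u)$ is parallel to the unit normal $N$ of $\hat s$ at $u$. The remaining task --- and the main obstacle --- is to upgrade this pointwise antipodality condition to ``proportionally arc length''. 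For this I would invoke the symmetry part of Proposition~\ref{prop:krkrpkt}: $N$ bisects the directions of $\tilde{\hat s} - \hat s$ and $\hat s''$, and since the first of these vectors is itself along $N$, reflection across $N$ forces $\hat s''$ to be along $N$ as well. Because $N \perp \hat s'$, this gives $\hat s'' \perp \hat s'$, hence $\tfrac{d}{du}|\hat s'|^2 = 2 \RE(\overline{\hat s'}\,\hat s'') = 0$, so $|\hat s'|$ is constant. Thus $\hat s$ is parametrized proportionally to arc length, and $s$ is M{\"o}bius equivalent to an arc length parametrization, which completes the proof.
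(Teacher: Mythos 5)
Your proof is correct and follows exactly the route the paper intends: the corollary is stated without proof as a direct consequence of Proposition~\ref{prop:krkrpkt} (M{\"o}bius invariance of $s \mapsto \tilde s$ and the symmetry of $\tilde s - s$ and $s''$ about $N$) together with the Remark that proportional-to-arc-length parametrization makes $s\tilde s$ a diameter, and your argument of sending the common point to $\infty$ and reading off antipodality is precisely that reasoning made explicit, including the needed converse step $\tilde s - s \parallel N \Rightarrow \langle s', s''\rangle = 0 \Rightarrow |s'|$ constant. The only cosmetic caveat is that the common point $M^{-1}(\infty)$ need not be finite (if $M$ fixes $\infty$ the orthogonal circles are concurrent lines), which is harmless since in the M{\"o}bius setting $\infty$ counts as a point, and a final scaling turns ``constant speed'' into ``unit speed''.
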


The commonly used characterization of arclength parametrizations of discrete
curves is by a polygon with constant edgelengths. However,
Corollary~\ref{cor:arclength} implies an immediate alternative:
\begin{defn}
  We call a discrete curve \emph{parametrized proportionally to arclength}
  if $p_{\gamma_i \gamma_{i + 1}}$ and $p_{\gamma_{i + 2} \gamma_{i - 1}}$
  are opposite points on the discrete curvature circle.
\end{defn}

\begin{thm}
  \label{thm:zweibein}
  The unit tangent vector $T_i$ and unit normal vector $N_i$ of the
  discrete curvature circle $k_i$ at $p_{\gamma_i \gamma_{i + 1}}$ are 
  second order approximations of the unit tangent vector $T$ and unit
  normal vector $N$ of the smooth curve (after appropriate orientation),
  i.e.,
  $$
    T_i = T + \landau(\eps^2)
    \quad\text{and}\quad
    N_i = N + \landau(\eps^2).
  $$
\end{thm}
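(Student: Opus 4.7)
The plan is to deduce both estimates from the asymptotic results already established in Theorem~\ref{thm:main}, without any new Taylor expansions. On any circle, the unit normal at a boundary point is (up to sign) the unit vector from that point toward the center, and the unit tangent is its $90^\circ$ rotation in the plane of the curve. So I would define $N_i$ as the unit vector from $p_{\gamma_i\gamma_{i+1}}$ to the discrete center $m_i$, and $T_i := -iN_i$, with the sign chosen to be consistent with the smooth convention $N = iT$ from equation~\eqref{eq:detcurvature}.

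Working at the edge $\gamma_0\gamma_1$, I would first subtract the two expansions~\eqref{eq:maincenter} and~\eqref{eq:mainpkt}. Since $p_{\gamma_0\gamma_1}$ approximates $s(u)$ at third order while $m_0$ approximates $s(u) + \tfrac{1}{\kappa}N$ at second order, the difference telescopes to
\begin{equation*}
  m_0 - p_{\gamma_0\gamma_1} \;=\; \tfrac{1}{\kappa}\, N + \landau(\eps^2).
\end{equation*}
Its norm is $\tfrac{1}{|\kappa|} + \landau(\eps^2)$, a computation already carried out at the very end of the proof of Theorem~\ref{thm:main}. Dividing the vector by its norm and using the Taylor expansion of $1/(1+x)$ on the scalar factor yields $N_0 = \mathrm{sign}(\kappa)\, N + \landau(\eps^2)$. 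After the sign is fixed, this is precisely $N_0 = N + \landau(\eps^2)$, and applying the rotation $-i$ gives $T_0 = T + \landau(\eps^2)$ with no further work.

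The only genuine obstacle is the orientation bookkeeping, which is why the theorem allows for ``appropriate orientation.'' The circle $k_0$ has two possible unit normals and two possible unit tangents at $p_{\gamma_0\gamma_1}$, and neither the point\dash insertion rule~\eqref{eq:inserting} nor Theorem~\ref{thm:circle} picks one out. Once $N_0$ is chosen to point from $p_{\gamma_0\gamma_1}$ toward $m_0$ on the concave side of the discrete curve, and $T_0$ is defined via the $+\pi/2$ rotation that realizes $N_0 = iT_0$ in accordance with~\eqref{eq:detcurvature}, the discrete and smooth frames agree at leading order and both estimates reduce to direct corollaries of Theorem~\ref{thm:main}.
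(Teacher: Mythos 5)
Your proposal is correct and follows essentially the same route as the paper: the paper also defines $N_i$ as the normalized vector between $p_{\gamma_i\gamma_{i+1}}$ and $m_i$, substitutes the expansions~\eqref{eq:mainpkt} and~\eqref{eq:maincenter} from Theorem~\ref{thm:main}, normalizes to get $\frac{|\kappa|}{\kappa}N + \landau(\eps^2)$, and absorbs the sign into the ``appropriate orientation.'' Your explicit transfer to $T_i$ via the rotation by $i$ matches the paper's remark that the approximation quality of $T$ and $N$ is the same, so only one of them needs to be computed.
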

\begin{proof}
  The approximation quality of $T$ and $N$ is the same so we just have to
  prove it for one of them:
  $$
    N_i
    = 
    \frac{p_{\gamma_i \gamma_{i + 1}} - m_i}{|p_{\gamma_i \gamma_{i + 1}}
    - m_i|}
    =
    \frac{s + \landau(\eps^3) - \big(s - \frac{1}{\kappa} N +
    \landau(\eps^2)\big)}
    {|s + \landau(\eps^3) - \big(s - \frac{1}{\kappa} N +
    \landau(\eps^2)\big)|}
    = 
    \frac{|\kappa|}{\kappa} N + \landau(\eps^2).
  $$
  After appropriate orientation $N$ and $N_i$ differ only about
  $\landau(\eps^2)$.
\end{proof}

\section{Curvature for Three Dimensional Curves}
\label{sec:3dcurves}

Before we generalize discrete curvature from discrete planar curves to
space curves we need some more results on the quaternionic cross\dash
ratio for points in three dimensional space. We will use the imaginary
quaternions $\IM \HH$ to describe points in three dimensional space $\R^3$
(see Sec.~\ref{subsec:quaternions}).

\subsection{Cross-ratio and geometry}

The authors used the quaternionic algebra and the cross\dash ratio
extensively in~\cite{vaxman+2017,vaxman+2018} for applications in regular
mesh design and for M{\"o}bius invariant subdivision algorithms. The
results of this paragraph can also be found there.  To prove some
technical Lemmas we first consider the following geometric property, which
can easily be verified.
\begin{lem}
	\label{lem:elemgeom}
	Let $a, b, c \in \R^n$ be three points. Then 
	$
	(a - b) \|a - c\|^2 - (a - c) \|a - b\|^2
	$
	is the direction of the tangent of the circumcircle  to the triangle $a
  b c$ at $a$.
\end{lem}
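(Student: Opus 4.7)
The plan is to use the fact that the tangent line to the circumcircle at $a$ is characterised in two ways: it lies in the plane spanned by $a-b$ and $a-c$, and it is perpendicular to the line joining $a$ to the circumcenter $m$ of the triangle. Since the vector $v:=(a-b)\|a-c\|^2-(a-c)\|a-b\|^2$ is visibly a linear combination of $a-b$ and $a-c$, it lies in the plane of the triangle, so the only thing to check is that $\la v,m-a\ra=0$.

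For this I would use the defining property $\|m-a\|=\|m-b\|=\|m-c\|$ of the circumcenter. Expanding $\|m-b\|^2=\|(m-a)+(a-b)\|^2$ and equating with $\|m-a\|^2$ yields immediately $\la m-a,a-b\ra=-\tfrac12\|a-b\|^2$, and analogously $\la m-a,a-c\ra=-\tfrac12\|a-c\|^2$. Substituting these two identities into $\la v,m-a\ra$ collapses the expression to zero, which completes the proof.

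A cleaner alternative that better matches the M\"obius flavour of the paper is to apply the inversion $\iota(x)=a+(x-a)/\|x-a\|^2$ centered at $a$: it sends the circumcircle of $a,b,c$ (which passes through the centre of inversion) to the straight line through $\iota(b)=a+(b-a)/\|a-b\|^2$ and $\iota(c)=a+(c-a)/\|a-c\|^2$. Since inversion preserves tangent directions at the centre of inversion, the tangent to the circumcircle at $a$ is parallel to $\iota(c)-\iota(b)$; clearing denominators by multiplying with $\|a-b\|^2\|a-c\|^2$ recovers $v$ up to sign. I do not foresee any real obstacle---the only care needed is to keep the signs straight when rewriting $\|b-a\|^2=\|a-b\|^2$ and $b-a=-(a-b)$ after clearing denominators.
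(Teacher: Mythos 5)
Both of your arguments are correct, and in fact the paper offers no proof at all of Lemma~\ref{lem:elemgeom} --- it is introduced only as a geometric property ``which can easily be verified'' --- so your write-up fills a genuine gap rather than deviating from an existing argument. Your first route is complete as sketched: from $\|m-a\|=\|m-b\|=\|m-c\|$ you get $\la m-a,\,a-b\ra=-\tfrac12\|a-b\|^2$ and $\la m-a,\,a-c\ra=-\tfrac12\|a-c\|^2$, hence $\la v,\,m-a\ra=\|a-c\|^2\la a-b,\,m-a\ra-\|a-b\|^2\la a-c,\,m-a\ra=0$ for $v=(a-b)\|a-c\|^2-(a-c)\|a-b\|^2$; since $v$ lies in the plane of the triangle and vanishes only when $a-b$ and $a-c$ are parallel (excluded, as collinear points have no circumcircle), $v$ is indeed the tangent direction at $a$. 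Your second, inversion-based route is arguably the more illuminating one for this paper: the corner tangent of Lemma~\ref{lem:tangentVector} is computed there as $t[c,a,b]=-(c-a)/\|c-a\|^2-(a-b)/\|a-b\|^2$, which is precisely the difference of your inverted points, and clearing denominators returns $\pm v$; so your inversion argument proves Lemmas~\ref{lem:elemgeom} and~\ref{lem:tangentVector} in one stroke and explains \emph{why} the formula has this shape. The only point to state more carefully is the fact you invoke about tangents at the centre of inversion: the tangent line at $a$ passes through the centre and is therefore mapped to itself, while the circle (through the centre) is mapped to a line; since circle and tangent meet only at $a$, whose image is $\infty$, the image line is \emph{parallel} to the tangent --- that is the precise sense in which the tangent direction is preserved, and it is what justifies reading off the direction from $\iota(c)-\iota(b)$.
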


\begin{defn}
  Let $a, b, c \in \IM \HH$ be pairwise distinct points. Then we call the
  imaginary quaternion
  $$
    t[a, b, c] := (a - b)^{-1} + (b - c)^{-1},
  $$
  \emph{corner tangent}.
\end{defn}

Note that the identity $a^{-1} + b^{-1} = a^{-1} (a + b) b^{-1}$
immediately implies 
\begin{equation}
  \label{eq:cornertangent}
  t[a, b, c] = (a - b)^{-1} (a - c) (b - c)^{-1}.
\end{equation}

\begin{figure}[t]
  \begin{overpic}[width=.3\textwidth]{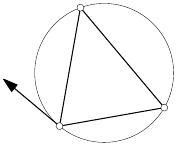}
    \lput(33,4){$c$}
    \rput(96,19){$b$}
    \lput(43,80){$b$}
    \rput(-2,41){\contour{white}{$t[c, a, b]$}}
  \end{overpic}
  \hfill
  \begin{overpic}[width=.3\linewidth]{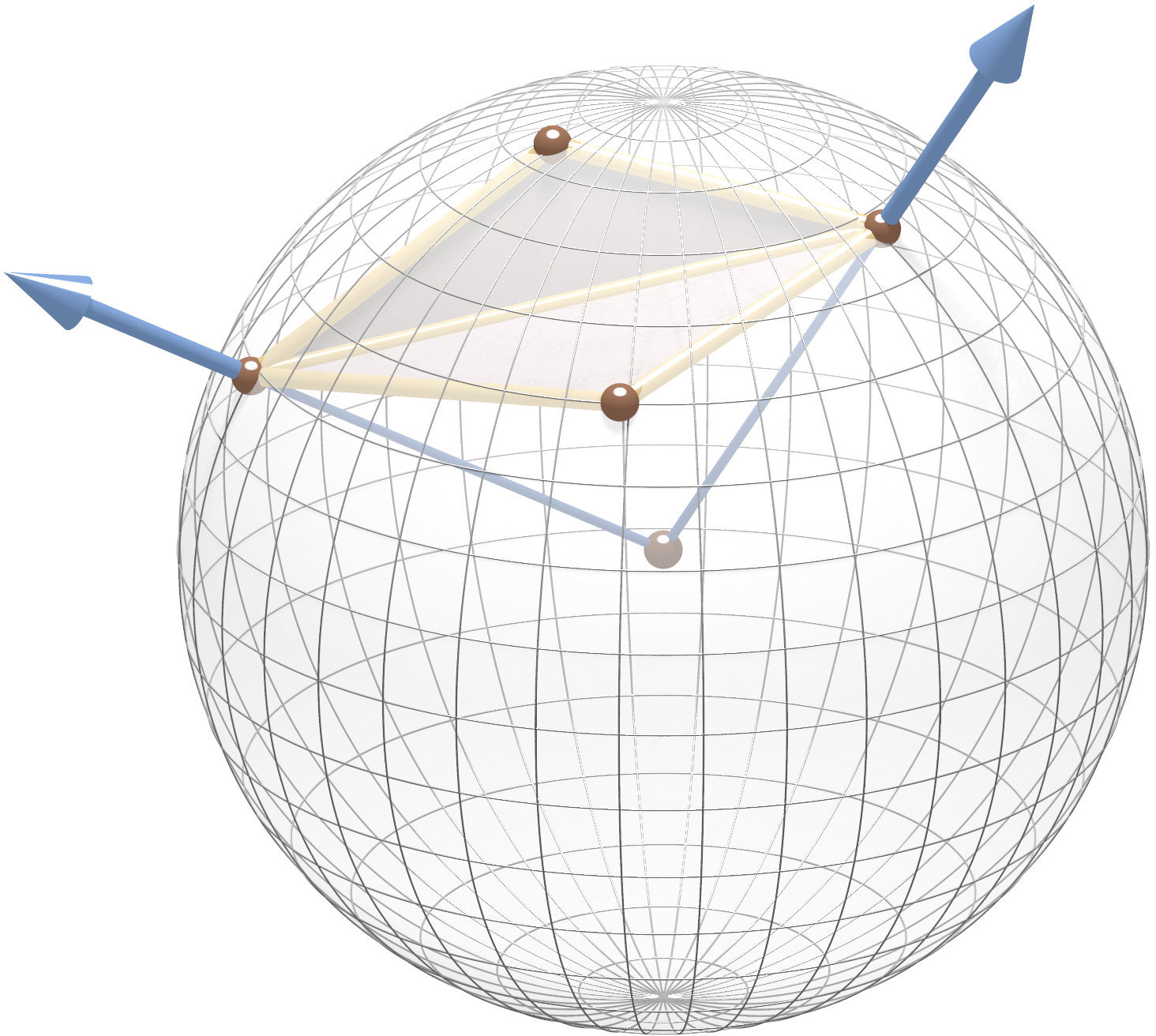}
    \put(79,66){\contour{white}{$a$}}
    \put(42,81){\contour{white}{$b$}}
    \put(15,50){\contour{white}{$c$}}
    \put(51,47){\contour{white}{$d$}}
    \lput(100,91){\contour{white}{$\IM\crr(a, b, c, d)$}}
    \lput(20,70){\contour{white}{$\IM\crr(c, d, a, b)$}}
  \end{overpic}%
  \hfill
  \begin{overpic}[width=.3\linewidth]{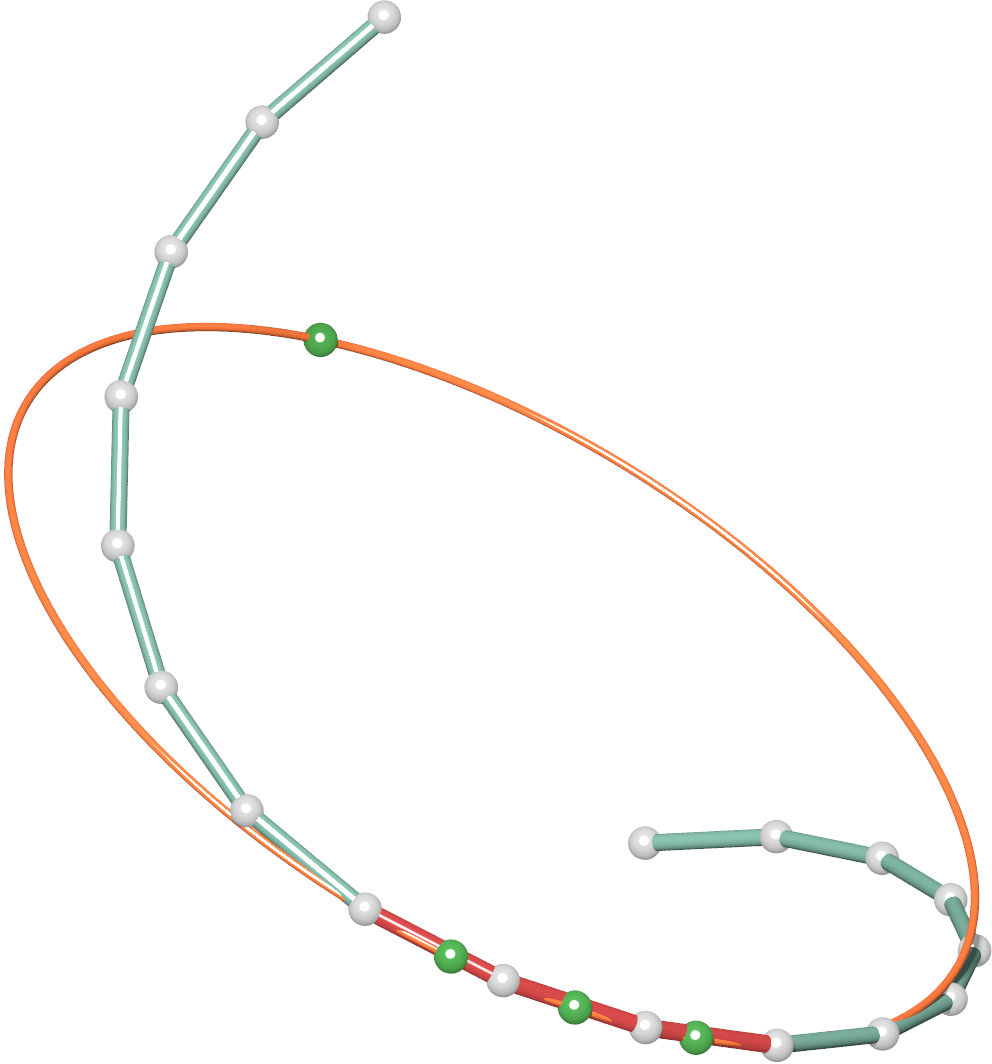}
    \put(15,50){$\gamma_i$}
    \put(35,5){\small$p_{ab}$}
    \put(47,0){\small$p_{bc}$}
    \put(62,-4){\small$p_{cd}$}
    \put(25,71){\small$p_{da}$}
  \end{overpic}%
  \caption{\emph{Left}: The corner tangent $t[c, a, b]$ is a vector
  in tangential contact with the circumcircle of a triangle $(a b c)$ at $a$.
  \emph{Center}: Circumsphere of $a, b, c, d$. The imaginary part of the
  cross\dash ratio $\crr(a, b, c, d)$ is a vector that is orthogonal to
  the circumsphere at $a$.
  \emph{Right}: A discrete space curve with a curvature circle. The four
  points $p_{ab}, p_{bc}, p_{cd}, p_{da}$ are concyclic also in the
  $3$-space case.
  }
  \label{fig:cornertangent}
\end{figure}

\begin{lem}
  \label{lem:tangentVector}
  Consider the circumcircle of $a, b, c \in \HH$, oriented according to this
  defining triangle.  Then the vector $t[c, a, b]$, placed at $a$, is in
  oriented tangential contact with the circle (see
  Figure~\ref{fig:cornertangent} left).
\end{lem}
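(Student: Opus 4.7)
My plan is to reduce the claim to Lemma~\ref{lem:elemgeom}, which already identifies the tangent line of the circumcircle at $a$, and then to pin down the orientation by one explicit configuration plus a continuity argument.

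First I would exploit that for a nonzero imaginary quaternion $v \in \IM\HH$ one has $v^{-1} = \bar v/|v|^2 = -v/|v|^2$. Applied to both summands of $t[c,a,b] = (c-a)^{-1} + (a-b)^{-1}$, this rewrites the corner tangent as the real linear combination
$$
t[c,a,b] = -\frac{c-a}{|c-a|^2} - \frac{a-b}{|a-b|^2}
$$
of $c-a$ and $a-b$. So, when anchored at $a$, the vector $t[c,a,b]$ already lies in the plane of the triangle $abc$, which is also the plane of the circumcircle. Multiplying by the positive real scalar $|a-b|^2|c-a|^2$ and rearranging signs yields
$$
|a-b|^2|c-a|^2 \cdot t[c,a,b] = -\bigl[(a-b)\|a-c\|^2 - (a-c)\|a-b\|^2\bigr],
$$
and by Lemma~\ref{lem:elemgeom} the bracketed expression is a direction vector of the tangent to the circumcircle at $a$. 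Hence $t[c,a,b]$ is a nonzero real multiple of it and lies on the tangent line.

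The remaining issue, and the main obstacle, is the orientation. I would settle it by checking one specific triple, for instance $a = i$, $b = j$, $c = -i$ in $\IM\HH$: the circumcircle is then the unit circle in the $ij$\dash plane, the triangle orientation $a\to b\to c$ is the counter\-clockwise one, and a direct computation gives $t[c,a,b] = j/2$, which is precisely the oriented tangent direction at $a$. Since the space of ordered triples of pairwise distinct, non\dash collinear points in $\IM\HH \cong \R^3$ is path connected (it is the complement of a set of real codimension at least two in $\R^9$), and since both $t[c,a,b]/|t[c,a,b]|$ and the oriented unit tangent to the circumcircle at $a$ depend continuously and non\dash vanishingly on the triple, agreement at one point forces agreement everywhere. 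In summary, the tangent\dash line part of the lemma is a one\dash step consequence of Lemma~\ref{lem:elemgeom}; the real content of the proof is the orientation check, which I would dispatch by one concrete example together with connectedness.
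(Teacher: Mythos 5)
Your proposal is correct and, for the tangent-line part, is exactly the paper's proof: rewrite $t[c,a,b]=-(c-a)/\|c-a\|^2-(a-b)/\|a-b\|^2$ using $q^{-1}=-q/|q|^2$ for $q\in\IM\HH$ and invoke Lemma~\ref{lem:elemgeom}. Your additional orientation verification (the explicit triple $a=i$, $b=j$, $c=-i$ giving $t[c,a,b]=j/2$, combined with connectedness of the non-collinear triples) goes beyond the paper, which simply cites Lemma~\ref{lem:elemgeom} and leaves the \emph{oriented} contact implicit -- indeed your computation shows $t[c,a,b]$ is a \emph{negative} multiple of the direction in Lemma~\ref{lem:elemgeom}, so this sign issue is precisely what the paper's one-line conclusion glosses over and your argument settles.
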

\begin{proof}
	Note that $q\! \in\! \IM \HH$ implies $q^{-1}\!\! =\! -q/|q|^2$. 
	Using the definition of the corner tangent yields
	\begin{equation*}
    t[c, a, b] = (c - a)^{-1} + (a - b)^{-1} 
    = -(c - a)/|c - a|^2 - (a - b)/|a - b|^2.
	\end{equation*}
	Consequently, Lemma \ref{lem:elemgeom} concludes the proof.
\end{proof}

\begin{lem}
        \label{lem:normalscr}
        Let $a, b, c, d \in \IM \HH$ be four points not lying on a common
        circle.  Then, the imaginary part of the cross\dash
        ratio is the normal of the circumsphere (or plane) at $a$, i.e.,  
        for a \emph{proper} circumsphere with center $m$ we have
		$
        \IM \crr(a, b, c, d) \parallel (m - a)
    $ 
          (see Figure~\ref{fig:cornertangent} center).
\end{lem}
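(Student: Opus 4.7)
The plan is to reduce the geometric statement to a short algebraic identity expressing the cross\dash ratio as a product of two corner tangents based at $a$; once this identity is in place, Lemma~\ref{lem:tangentVector} makes the rest of the argument essentially geometric.

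First I would derive the identity
\[
  \crr(a,b,c,d) = t[c,a,b]^{-1}\, t[c,a,d].
\]
This can be seen by rewriting each corner tangent as $t[c,a,b] = (c-a)^{-1}(c-b)(a-b)^{-1}$ and $t[c,a,d] = (c-a)^{-1}(c-d)(a-d)^{-1}$ using $x^{-1}+y^{-1} = x^{-1}(x+y)y^{-1}$. Then $t[c,a,b]^{-1} = (a-b)(c-b)^{-1}(c-a)$, the product $t[c,a,b]^{-1}\, t[c,a,d]$ exhibits an inner cancellation $(c-a)(c-a)^{-1}=1$, and the sign flips $(c-b)^{-1} = -(b-c)^{-1}$, $(a-d)^{-1} = -(d-a)^{-1}$ reassemble the defining expression of $\crr(a,b,c,d)$. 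Spotting this factorisation is really the only genuinely tricky step of the whole proof.

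Next, by Lemma~\ref{lem:tangentVector}, the vectors $t[c,a,b]$ and $t[c,a,d]$ are tangent at $a$ to the circumcircles of $(a,b,c)$ and $(a,c,d)$ respectively, and both of these circles lie on the circumsphere through $a,b,c,d$. Hence both vectors lie in the tangent plane to the sphere at $a$. The hypothesis of non\dash concyclicity means the two circles are distinct; since they share the chord $ac$, their tangent directions at $a$ must differ, so $t[c,a,b]$ and $t[c,a,d]$ are linearly independent and therefore span the tangent plane.

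Finally, for two imaginary quaternions $u,v$ one has $u^{-1} = -u/|u|^2$ and $\IM(uv) = u\times v$, which gives $\IM(u^{-1}v) = -(u\times v)/|u|^2$. Applied to the identity above, this shows that $\IM\crr(a,b,c,d)$ is a nonzero real multiple of $t[c,a,b]\times t[c,a,d]$. Being the cross product of two vectors spanning the tangent plane at $a$, it is orthogonal to that tangent plane and hence parallel to the sphere normal $m-a$, which is exactly the claim.
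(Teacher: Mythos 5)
Your proof is correct and follows essentially the same route as the paper: factor $\crr(a,b,c,d)$ as a product of corner tangents based at $a$ (the paper uses $t[c,a,b]^{-1}\,t[d,a,c]$, which agrees with your $t[c,a,b]^{-1}\,t[c,a,d]$ up to sign), invoke Lemma~\ref{lem:tangentVector}, and read off the imaginary part as a cross product of tangent vectors to two circles on the circumsphere at $a$. Your explicit check that non\dash concyclicity forces the two tangent directions to be linearly independent (so the cross product is nonzero) is a nice detail the paper leaves implicit.
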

\begin{proof}
	We compute the cross\dash ratio in terms of corner tangents  
	abbreviated by $t_1 = t[c, a, b]$ and $t_2 = t[d, a, c]$:
	\begin{align*}
		\crr(a, b, c, d) 
    &=
    (a - b)^{\vphantom{-1}}\, (b - c)^{-1}\,
    (c - d)^{\vphantom{-1}}\, (d - a)^{-1}
    \\
    &=
    (a - b)^{\vphantom{-1}}\, (b - c)^{-1} 
    [(a - c)^{\vphantom{-1}}\, (a - c)^{-1}]
    (c - d)^{\vphantom{-1}}\, (d - a)^{-1}\\
		&=
    [(a - c)^{-1}\, (b - c)^{\vphantom{-1}}\, (a - b)^{-1}]^{-1}
    [(a - c)^{-1}\, (c - d)^{\vphantom{-1}}\, (d - a)^{-1}]\\
		&=
    [(a - c)^{-1}\, (b - c)^{\vphantom{-1}}\, (b - a)^{-1}]^{-1}
    [(a - c)^{-1}\, (d - c)^{\vphantom{-1}}\, (d - a)^{-1}]
    \\
    &\overset{\eqref{eq:cornertangent}}{=}
		t_1^{-1}\, t_2^{\vphantom{-1}}.
	\end{align*}
  Since $t_1$ and $t_2$ are both imaginary we can write the cross\dash
  ratio as
	$$
		\crr(a, b, c, d) 
    =
	  [
		  \la t_1^{-1}, t_2^{\vphantom{-1}}\ra,
		  -t_1^{-1} \times t_2^{\vphantom{-1}}
	  ].
	$$
	Lemma~\ref{lem:tangentVector} implies that $t_1^{-1}$ and
	$t_2$ are tangent vectors to the circumcircles of the triangles 
	$(a b c)$ and $(c d a)$, respectively, both at $a$.
  Consequently, the imaginary part of the above cross\dash ratio is the
  cross product of tangent vectors to circles on the circumsphere of $a,
  b, c, d$ at $a$, hence orthogonal to the tangent plane of the
  circumsphere at $a$.
\end{proof}

\begin{prop}
  \label{prop:spherical}
  Let $a, b, c, d \in\IM\HH$ be four non\dash concyclic points with
  $\crr(a, b, c, d) = [r, v]$. Further, let $f \in \HH$ be the quaternion
  that solves 
	\begin{equation*}
		\crr(a, b, c, f) = [\lambda r, \mu v],
	\end{equation*}
  for some $\lambda, \mu \in \R$.
	Then $f \in \IM \HH$, i.e., $f$ is an imaginary quaternion representing
  a point in $\R^3$. Furthermore, $f$ lies on the circumsphere of $a, b,
  c, d$. In particular $f(\lambda, \mu)$ is a parametrization of the
  circumsphere.
\end{prop}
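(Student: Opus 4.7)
My plan is to recognize the map $\Phi(f) := \crr(a,b,c,f)$ as a quaternionic M\"obius transformation of $\HH \cup \{\infty\}$ and to read the statement off by pulling a two\dash plane back through $\Phi$. The factorization
$$
\Phi(f) = (a-b)(b-c)^{-1}\bigl[(c-a)(f-a)^{-1} - 1\bigr]
$$
exhibits $\Phi$ as the composition of the quaternionic inversion $f \mapsto (f-a)^{-1}$ with a left affine quaternionic map; consequently $\Phi$ lies in the conformal group of $S^4 = \HH \cup \{\infty\}$ and carries generalized $k$\dash spheres to generalized $k$\dash spheres for every $k$. Direct substitution shows that $\Phi$ sends $a, b, c, d$ to $\infty, 1, 0, [r,v]$.

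The first main step is to locate the image of the imaginary subspace. The extended imaginary space $\IM\HH \cup \{\infty\}$ is a generalized three\dash sphere in $S^4$, and since it contains $a$ (the preimage of $\infty$) its $\Phi$\dash image is itself a generalized three\dash sphere through $\infty$, hence a three\dash dimensional linear subspace $V \subset \HH$ (note that $V$ also contains $\Phi(c) = 0$). Because $V$ contains $\Phi(b) = 1$ and $\Phi(d) = [r,v]$, it must contain the real axis $\R \subset \HH$ as well as the imaginary direction $[0,v] = [r,v] - [r,0]$. Assuming $r \neq 0$ and $v \neq 0$ (non\dash concyclicity guarantees the latter), the set $\{[\lambda r, \mu v] : \lambda,\mu \in \R\}$ coincides with the two\dash plane $\R \oplus \R v$ and is therefore contained in $V$.

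The second main step is to transport this two\dash plane back through $\Phi^{-1}$. The set $P := \{[\lambda r, \mu v] : \lambda, \mu \in \R\}$ together with $\infty$ is a generalized two\dash sphere in $S^4$, so its preimage $\Sigma := \Phi^{-1}(P \cup \{\infty\})$ is another generalized two\dash sphere, and the inclusion $P \subset V$ places $\Sigma$ inside $\IM\HH \cup \{\infty\}$. Thus $\Sigma$ is a two\dash sphere or two\dash plane sitting in $\R^3$ and passing through the four non\dash concyclic points $a, b, c, d$, which uniquely determine it to be the circumsphere of $a,b,c,d$. Every solution $f$ of $\crr(a,b,c,f) = [\lambda r, \mu v]$ therefore lies in $\IM\HH$ on the circumsphere; and since $(\lambda,\mu) \mapsto [\lambda r, \mu v]$ is an $\R$\dash linear bijection onto $P$ while $\Phi^{-1}$ is itself bijective, the assignment $(\lambda,\mu) \mapsto f(\lambda,\mu)$ parametrizes the circumsphere (with $(\lambda,\mu) \to \infty$ playing the role of the omitted point $a$).

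The step I expect to be most delicate is the conformal\dash invariance claim for $\Phi$, namely that left multiplication by a nonzero quaternion and quaternionic inversion genuinely preserve the family of generalized $k$\dash spheres in $\R^4$; once this is in place the rest is a clean pull\dash back. The degenerate case $r = 0$ should be noted separately, because then $P$ collapses to a line and its preimage is only a circle on the circumsphere, breaking the two\dash parameter parametrization claim.
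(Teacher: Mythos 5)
Your proposal is correct, but it takes a genuinely different route from the paper's proof. The paper argues infinitesimally at $a$ via corner tangents: it writes $\crr(a,b,c,d)=t_1\,t_2$ and $\crr(a,b,c,f)=t_1\,t_3$ with $t_1 = t[c,a,b]^{-1}$, $t_2 = t[d,a,c]$, $t_3 = t[f,a,c]$ all imaginary, solves the resulting linear conditions to get $t_3=\alpha t_1+\beta t_2$ with $\beta=\mu$ and $\alpha=(\lambda-\mu)\la t_1,t_2\ra/|t_1|^2$, recovers $f=(t_3-(a-c)^{-1})^{-1}+a\in\IM\HH$ explicitly, and identifies the circumsphere through the parallel\dash normal criterion of Lemma~\ref{lem:normalscr}. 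You instead read $\Phi(f)=\crr(a,b,c,f)$ as a quaternionic M{\"o}bius transformation of $\HH\cup\{\infty\}$ and pull the plane $\{[\lambda r,\mu v]\}\cup\{\infty\}$ back to a generalized $2$\dash sphere through $a,b,c,d$ inside $\IM\HH\cup\{\infty\}$. The ingredient you flag as delicate --- that left multiplication by a nonzero quaternion and $q\mapsto q^{-1}$ preserve generalized $k$\dash spheres --- is indeed the only external fact you need, and it is true and standard (left multiplication is a Euclidean similarity of $\R^4$; inversion is quaternionic conjugation composed with reflection in the unit sphere), so your pull\dash back argument closes. What the paper's computation buys is an explicit formula for $f$ and an argument entirely inside the corner\dash tangent machinery it reuses elsewhere; what yours buys is a conceptual explanation of why a sphere must appear and a transparent account of the parametrization (covering the circumsphere with the point $a$ omitted). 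Two fine points: identifying $\Sigma$ with the circumsphere requires $b\in\Sigma$, i.e.\ $1\in P$, hence $r\neq 0$; you rightly flag $r=0$, where $P$ collapses to the line $\R v$ and its preimage is only the circle through $a,c,d$ --- note this circle still lies on the circumsphere, so the claims ``$f\in\IM\HH$ and $f$ on the circumsphere'' survive, and only the parametrization statement degenerates. The paper's own proof contains the same silent degeneracy, since there $\alpha=0$ whenever $\la t_1,t_2\ra=0$, so your explicit caveat is if anything more careful than the original.
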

\begin{proof}
  The two occurring cross\dash ratios can be expressed as (see the proof of
  Lemma~\ref{lem:normalscr})
	\begin{align*}
		\crr(a, b, c, d) = t_1 \cdot t_2,
		\quad\text{and}\quad
		\crr(a, b, c, f)= t_1 \cdot t_3,
	\end{align*}
  where 
  $t_1 := t[c, a, b]^{-1}$,
  $t_2 := t[d, a, c]$, and 
  $t_3 := t[f, a, c]$. 
  Consequently, as all $t_i \in \IM \HH$, we have 
	\begin{equation*}
		[r, v] = [\la t_1, t_2\ra, -t_1 \times t_2],
		\quad\text{and}\quad
		[\lambda r, \mu v] 
		= [\la t_1, t_3\ra, -t_1 \times t_3].
	\end{equation*}
  Since all $t_1, t_2, t_3$ are orthogonal to $v$ and are therefore 
  linearly dependent we can express $t_3$ in the form $t_3 = \alpha t_1 +
  \beta t_2$.
  The two vectors $t_1$ and $t_2$ are linearly independent since otherwise
  $t_1 \times t_2$ would be zero and therefore $\crr(a, b, c, d) = [r, v]
  = [-\la t_1, t_2\ra, 0] \in \R$ which is a contradiction to the four
  points $a, b, c, d$ not being concyclic.

  After inserting $t_3 = \alpha t_1 + \beta t_2$ into the above equations
  we obtain
	\begin{align*}
		&
		\lambda \la t_1, t_2\ra 
		= 
    \lambda r
		= 
    \la t_1, t_3\ra
		= 
		\alpha \la t_1, t_1\ra + \beta \la t_1, t_2\ra,
		\\
		&
		\mu t_1 \times t_2 
		=
    -\mu v
		=
    t_1 \times t_3
		= 
		\alpha t_1 \times t_1 + \beta t_1 \times t_2.
	\end{align*}
	Consequently, $\beta = \mu$ and 
	$\alpha = (\lambda - \mu) \la t_1, t_2\ra/|t_1|^2$, which determines
	$t_3$ uniquely. From the definition of  
  $t_3 = t[f, a, c] = (f - a)^{-1} + (a - c)^{-1}$, we then
	immediately get
	\begin{equation*}
		f = (t_3 - (a - c)^{-1})^{-1} + a \in \IM \HH.
	\end{equation*}
	Furthermore, the circumsphere of $a, b, c, f$ is the same as the
	circumsphere of $a, b, c, d$ since both pass through $a, b, c$ and both
	have parallel normal vectors ($\mu v$ and $v$, resp.) at $a$, and there is
	only one such sphere.
\end{proof}

\subsection{Point-insertion-rule in $\HH$}

Let us now consider the analogous construction of~\eqref{eq:inserting} by 
inserting a new point to given four points $a, b, c, d \in \IM \HH$ in
three dimensional space. However, the quaternionic square root is not
uniquely defined in our formulation (see Sec.~\ref{subsec:quaternions})
for negative real numbers. So we must exclude that case in the following
which is not a significant restriction as this case (i.e., $\crr(c, a, b,
d) \in \R_{< 0})$ corresponds to a concyclic quadrilateral $a, b, c, d$
with $a$ separated from $d$ by $b$ and $c$ on the circumcircle. We exclude
such ``zigzag'' quadrilaterals in the following and consider them as
discrete singularities of our polygons.

The quaternionic formula analogous to~\eqref{eq:inserting} reads:
\begin{equation*}
  f(a, b, c, d) := 
  \big(
    (b - a) (c - a)^{-1} \sqrt{\crr(c, a, b, d)} + 1
  \big)^{-1}
  \cdot
  \big(
    (b - a) (c - a)^{-1} \sqrt{\crr(c, a, b, d)} c + b
  \big).
\end{equation*}
The notation of this formula is less flexible than in the complex case due
to the noncommutativity of $\HH$.
As it will turn out $f(a, b, c, d)$ is purely imaginary and thus in three
space, but note that a priory $f$ is a quaternion and at first not
apparently imaginary. 
In analogy to Lemma~\ref{lem:crequation} $f$ is also the solution to a
cross\dash ratio equation:
\begin{lem}
  \label{lem:spherical}
  The newly inserted point $f(a, b, c, d)$ fulfills
  $$
    \crr(c, a, b, f(a, b, c, d)) = -\sqrt{\crr(c, a, b, d)}.
  $$
\end{lem}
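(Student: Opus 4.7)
The plan is to mirror the proof of Lemma~\ref{lem:crequation}, i.e., to start from the desired cross\dash ratio equation and solve for $f$, verifying that the solution matches the displayed formula. The only new difficulty is the noncommutativity of $\HH$: every factor must be multiplied in from the correct side, and we must be careful to distinguish left inverses from right inverses. To lighten notation I abbreviate $\sigma := \sqrt{\crr(c, a, b, d)}$.

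I would first expand the left\dash hand side using the definition of the cross\dash ratio,
$$
  \crr(c, a, b, f) = (c - a)(a - b)^{-1}(b - f)(f - c)^{-1},
$$
and set this equal to $-\sigma$. Multiplying on the left by $(a - b)(c - a)^{-1}$ and on the right by $(f - c)$ turns the equation into
$$
  b - f = -(a - b)(c - a)^{-1}\sigma\,(f - c) = (b - a)(c - a)^{-1}\sigma\,(f - c).
$$
Writing $u := (b - a)(c - a)^{-1}\sigma$, this reads $b - f = u(f - c) = uf - uc$, which rearranges to $b + uc = (1 + u)f$, hence
$$
  f = (1 + u)^{-1}(b + uc) = \bigl((b - a)(c - a)^{-1}\sigma + 1\bigr)^{-1}\bigl((b - a)(c - a)^{-1}\sigma\,c + b\bigr),
$$
which is exactly the formula defining $f(a, b, c, d)$.

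The only point where one has to be genuinely careful is the invertibility of $1 + u$, which relies on the standing assumption that $\crr(c, a, b, d)$ is not a nonpositive real (so that $\sigma$ is well\dash defined) and on $a, b, c, d$ being pairwise distinct, so that no denominator collapses. The steps above are all reversible, so once $f$ is defined by the displayed formula it satisfies $\crr(c, a, b, f) = -\sigma$, which is the claim. In particular, as in Lemma~\ref{lem:crequation}, the construction of $f$ is expressible purely in terms of cross\dash ratios and therefore M\"obius invariant; this also explains a posteriori why $f \in \IM\HH$, a fact that will be needed in the subsequent discussion.
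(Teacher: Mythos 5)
Your derivation is correct and is essentially the argument the paper intends: the paper states this lemma without proof, ``in analogy to Lemma~\ref{lem:crequation}'', and your computation is precisely that analogy carried out with the correct left/right multiplications in $\HH$, recovering the quaternionic formula $f=(1+u)^{-1}(b+uc)$ with $u=(b-a)(c-a)^{-1}\sqrt{\crr(c,a,b,d)}$. One caution on your closing aside: Möbius invariance/expressibility in cross\dash ratios does \emph{not} by itself explain why $f\in\IM\HH$ -- in the paper that is a separate corollary, proved via Proposition~\ref{prop:spherical} together with the fact that the imaginary part of the quaternionic square root is parallel to the imaginary part of the cross\dash ratio.
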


\begin{cor}
$f$ is a point in three dimensional space, i.e., $f \in \IM \HH$. Even
  more, $f$ lies on the circumsphere of $a, b, c, d$.
\end{cor}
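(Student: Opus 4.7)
The plan is to derive the corollary from Proposition~\ref{prop:spherical} using the cross-ratio equation supplied by Lemma~\ref{lem:spherical},
\[
  \crr(c,a,b,f(a,b,c,d)) = -\sqrt{\crr(c,a,b,d)}.
\]
Although Proposition~\ref{prop:spherical} is stated with the first three arguments of the cross-ratio ordered $(a,b,c)$, the argument in its proof depends only on the fact that the corner tangents $t[c,a,b]$, $t[d,a,c]$, $t[f,a,c]$ all share the same base point, namely the first cross-ratio argument; hence the statement applies verbatim after relabeling the first three slots to $(c,a,b)$.

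It remains to verify that $-\sqrt{\crr(c,a,b,d)}$ has the structural form $[\lambda R, \mu V]$ required by the proposition, where $[R,V] := \crr(c,a,b,d)$. Writing the polar representation $[R,V] = |q|[\cos\phi, \hat v \sin\phi]$ with $\hat v = V/\|V\|$, the principal square root is $\sqrt{|q|}[\cos(\phi/2), \hat v \sin(\phi/2)]$, whose real and imaginary parts are real scalar multiples of $R$ and $V$ respectively (in the generic case $R,V \neq 0$). Thus Lemma~\ref{lem:spherical} places $\crr(c,a,b,f)$ exactly in the form required by Proposition~\ref{prop:spherical}, and both conclusions --- $f \in \IM\HH$ and $f$ lying on the circumsphere of $a,b,c,d$ --- follow at once.

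The main obstacle is the degenerate concyclic configuration $\crr(c,a,b,d) \in \R_{>0}$, in which no unique circumsphere exists and Proposition~\ref{prop:spherical}'s non-concyclicity hypothesis fails. This case is nevertheless benign: a positive real cross-ratio forces $-\sqrt{\crr(c,a,b,d)}$ to be a negative real number, so $\crr(c,a,b,f)$ is real as well, which places $f$ on the common circle through $a,b,c,d$ --- and that circle is contained in every sphere passing through it. Alternatively one restricts to the complex plane containing the four points and invokes Theorem~\ref{thm:circle} verbatim.
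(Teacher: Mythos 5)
Your proof is correct and follows essentially the same route as the paper: combine Lemma~\ref{lem:spherical} with Proposition~\ref{prop:spherical}, using that the (negated) quaternionic square root of $\crr(c,a,b,d)=[r,v]$ again has the form $[\lambda r,\mu v]$. Your explicit remarks on relabeling the first three cross-ratio slots and on the degenerate concyclic case are details the paper's two-line proof silently glosses over, and they only strengthen the argument.
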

\begin{proof}
  The square root of a quaternion $q = [r, v]$ (see
  Sec.~\ref{subsec:quaternions}) is a quaternion with imaginary part
  parallel to $v$, i.e., parallel to the imaginary part of $q$.
  Consequently, Proposition~\ref{prop:spherical} implies that $f$ is in
  $\IM \HH$ and in particular on the circumsphere of $a, b, c, d$.
\end{proof}

\subsection{Curvature for discrete space curves}

In this section we will relate the curvature and curvature circle of
discrete curves in three dimensional space to the planar case
(Sec.~\ref{subsec:curvature}). 
But first let us recall some properties of smooth curves $s: \R \to \R^3$.

Consider a sequence of four points on the curve $s$ which converge to one
point $s(0)$. At any time the four points are assumed to uniquely
determine a sphere.  Consequently, as the four points converge to one
point the sequence of spheres defined that way converges to the so called
osculating sphere (see e.g.,~\cite{docarmo-1976}). The \emph{osculating
sphere} passes through $s(0)$ and has its center at
\begin{equation}
  \label{eq:osculatingsphere}
  s(0) + \frac{1}{\kappa} N + \frac{\kappa'}{\kappa^2 \tau} B,
\end{equation}
where $N$ is the unit normal vector, $B$ the binormal unit vector,
$\kappa$ the curvature, and $\tau$ the torsion of the curve. The
curvature circle at $s(0)$ is the intersection of the osculating plane
with the osculating sphere and thus lies on the osculating sphere.

\begin{lem}
  \label{lem:osculating}
  The osculating sphere has contact of order $\geq 3$ with the curve $s$
  which implies that there is a curve $\hat s$ on the osculating sphere,
  s.t.,
  $$
  s(0) = \hat s(0),
  \quad
  s'(0) = \hat s'(0),
  \quad
  s''(0) = \hat s''(0),
  \quad
  s'''(0) = \hat s'''(0),
  $$
\end{lem}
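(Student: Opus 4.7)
The plan is to verify that $g(t) := \|s(t) - c\|^2 - R^2$ vanishes to order at least four at $t = 0$, where $c$ and $R$ denote the center and radius of the osculating sphere given by~\eqref{eq:osculatingsphere}, and then to exhibit $\hat s$ as the radial projection of $s$ onto that sphere.

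First I would reduce to the arc length parametrization of $s$: this loses no generality, because if a curve $\hat s$ on the sphere agrees with the arc length version up to order three, post-composition with the inverse reparametrization produces a curve on the sphere agreeing with the original $s$ up to order three at $s(0)$. In arc length, and with the Frenet--Serret equations matched to the torsion sign convention of~\eqref{eq:curvaturetorsion} (so that $s' = T$, $s'' = \kappa N$, $N' = -\kappa T - \tau B$), the components of $v(0) := s(0) - c$ along $T, N, B$ can be read off immediately from~\eqref{eq:osculatingsphere} as $0$, $-1/\kappa$, and $-\kappa'/(\kappa^2 \tau)$.

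Next I would differentiate $g = \la v, v\ra - R^2$ three times using Frenet--Serret to obtain
\begin{align*}
g'(t) &= 2 \la v, T\ra, \\
g''(t) &= 2 + 2 \kappa \la v, N\ra, \\
g'''(t) &= 2 \kappa' \la v, N\ra - 2 \kappa^2 \la v, T\ra - 2 \kappa \tau \la v, B\ra,
\end{align*}
and plug in the three inner products at $t = 0$ to check that each of $g(0), g'(0), g''(0), g'''(0)$ vanishes. The delicate step is the cancellation in $g'''(0)$: this is precisely what forces the $B$-coefficient of $c$ to equal $\kappa'/(\kappa^2 \tau)$, so checking it amounts essentially to re-deriving formula~\eqref{eq:osculatingsphere} itself. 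This is the main obstacle, even though once the three displayed derivatives are in hand it is only a short substitution.

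Finally I would define $\hat s(t) := c + R (s(t) - c)/\|s(t) - c\|$, which lies on the osculating sphere by construction and is well defined for $t$ near $0$ since $v(0) \neq 0$. Writing $\hat s(t) - s(t) = (s(t) - c)\bigl(R/\|s(t)-c\| - 1\bigr)$ and using $\|s(t) - c\|^2 = R^2 + g(t) = R^2 + \landau(t^4)$ to conclude $R/\|s(t)-c\| = 1 + \landau(t^4)$, one obtains $\hat s(t) - s(t) = \landau(t^4)$; matching Taylor coefficients then gives $s^{(j)}(0) = \hat s^{(j)}(0)$ for $j = 0, 1, 2, 3$, as required.
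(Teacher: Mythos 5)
Your argument is correct, and it is in fact more than the paper provides: the paper states Lemma~\ref{lem:osculating} as a classical fact about the osculating sphere (with only a pointer to the standard literature) and never proves it, so there is no ``paper proof'' to compare against line by line. Your route is the standard textbook one, carried out completely: reduce to arc length (a legitimate reduction, since the derivatives of a composition up to order three depend only on derivatives up to order three of each factor, and composing with the reparametrization keeps $\hat s$ on the sphere), verify that $g(t)=\|s(t)-c\|^2-R^2$ vanishes together with $g'$, $g''$, $g'''$ at $t=0$ using the Frenet equations, and then produce $\hat s$ by radial projection, concluding from $\hat s - s = \landau(t^4)$ that the first three derivatives agree. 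Two points worth noting in your favor: you correctly adapted the Frenet equation to the paper's sign convention for $\tau$ in~\eqref{eq:curvaturetorsion} (writing $N'=-\kappa T-\tau B$), so your cancellation in $g'''(0)$ is consistent with the center formula~\eqref{eq:osculatingsphere}; and the radial-projection step cleanly converts ``fourth-order contact of the distance function'' into the existence of the curve $\hat s$ demanded by the lemma, rather than leaving that implication implicit. The only tacit hypotheses are $\kappa\neq 0$ and $\tau\neq 0$ so that the osculating sphere in~\eqref{eq:osculatingsphere} is a proper sphere and $v(0)\neq 0$; the paper makes the same implicit assumption, but it would be worth one sentence to say so.
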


This immediately implies the following lemma.
\begin{lem}
  The curvature and the curvature circle of a space curve $s(u)$ at $u =
  0$ is the same as the curvature and the curvature circle of $\hat s$ on the
  osculating sphere at $u = 0$.
\end{lem}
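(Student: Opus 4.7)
The plan is to reduce this to the previous Lemma~\ref{lem:osculating} together with the observation that the curvature and the curvature circle of a smooth space curve at a point $u_0$ are functions only of the $0$th, $1$st, and $2$nd derivatives of the curve at $u_0$.

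First I would recall from~\eqref{eq:curvaturetorsion} that $\kappa(0) = \|s'(0) \times s''(0)\|/\|s'(0)\|^3$, so the curvature at $u=0$ is manifestly a function of $s'(0)$ and $s''(0)$ alone. Next, I would describe the curvature circle geometrically: it is the unique circle lying in the osculating plane (the affine plane through $s(0)$ spanned by $s'(0)$ and $s''(0)$), passing through $s(0)$, with radius $1/\kappa(0)$, and with center on the side of the normal line determined by the component of $s''(0)$ orthogonal to $s'(0)$. Equivalently, its center is $s(0) + N(0)/\kappa(0)$, where both $N(0)$ and $\kappa(0)$ are determined by $s(0), s'(0), s''(0)$ alone. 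So the entire curvature circle at $u=0$ is a function of the jet $(s(0), s'(0), s''(0))$.

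Then I would invoke Lemma~\ref{lem:osculating}: there exists $\hat s$ lying on the osculating sphere with $s(0) = \hat s(0)$, $s'(0) = \hat s'(0)$, $s''(0) = \hat s''(0)$ (and even $s'''(0)=\hat s'''(0)$, though we do not need the third order agreement here). Applying the formulas from the first paragraph to $\hat s$ in place of $s$ immediately yields $\kappa_s(0) = \kappa_{\hat s}(0)$ and that the two curvature circles have the same osculating plane, the same basepoint, the same radius, and the same unit normal, hence coincide.

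There is essentially no obstacle: the only subtlety is to state clearly that the curvature circle depends only on the $2$-jet, so that the third order agreement guaranteed by Lemma~\ref{lem:osculating} is not even needed (the $2$-jet agreement suffices). I would also remark that, as the curvature circle of $s$ lies in the osculating plane and passes through $s(0)$ with the correct radius, it automatically lies on the osculating sphere (being the intersection of the osculating plane with that sphere, as noted just before the lemma), which confirms the internal consistency of the statement.
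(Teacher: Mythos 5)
Your proposal is correct and follows essentially the same route as the paper, which simply treats the lemma as an immediate consequence of Lemma~\ref{lem:osculating}; you merely make explicit the (standard) observation that curvature and the curvature circle depend only on the $2$-jet of the curve at $u=0$.
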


Any M{\"o}bius transformation that maps the osculating sphere to a plane
also transforms the curvature circle to that plane.

Let us now define a curvature circle for discrete space curves. 
So let us start with a discrete curve $\gamma: \Z \to \R^3$ and set 
$
a = \gamma_{i - 1}, 
b = \gamma_{i}, 
c = \gamma_{i + 1}, 
d = \gamma_{i + 2}.
$
In analogy to Theorem~\ref{thm:circle} we define 
$$
  p_{ab} = f(d, a, b, c),\
  p_{bc} = f(a, b, c, d),\
  p_{cd} = f(b, c, d, a),\
  p_{da} = f(c, d, a, b),
$$
but now for the `quaternionic' $f$. Lemma~\ref{lem:spherical} implies that
$p_{ab}, p_{bc}, p_{cd}, p_{da}$ lie on the circumsphere of $a, b, c, d$
which we consider as the \emph{discrete osculating sphere}. 

Let us now consider a M{\"o}bius transformation that maps the osculating
sphere to the $[yz]$-plane of a Cartesian $xyz$-coordinate system. This
M{\"o}bius
transformation (as any M{\"o}bius transformation does) keeps the real part
as well as the length of the imaginary part of the cross\dash ratio of
four points invariant. The transformed cross\dash ratios have imaginary parts
that are orthogonal to the circumsphere of the new points
(Lemma~\ref{lem:normalscr}). Therefore the transformed cross\dash ratios
have imaginary parts that are parallel to the $x$-axis of the coordinate
system. Consequently, the cross\dash ratios are complex numbers $[r, (x,
0,0)]$ and we arrive at the case of planar curves
(Sec.~\ref{sec:curvatureplanar}).

So after the M{\"o}bius transformation we can apply
Theorem~\ref{thm:circle} which implies that $p_{ab}, p_{bc}, p_{cd},
p_{da}$ lie on a common circle $\tilde k_i$ and have a cross\dash ratio of
$-1$.  Furthermore, the inverse M{\"o}bius transformation maps the circle
$\tilde k_i$ to a circle $k_i$ on the osculating sphere. And since
M{\"o}bius transformations map the curvature circle of a curve to the
curvature circle of the transformed curve the following definition is
sensible.

\begin{defn}
  For discrete space curves $\gamma: \Z \to \R^3$ we call the circle $k_i$
  \emph{(discrete) curvature circle} and the inverse of its radius
  \emph{curvature $\kappa_i$ at the edge $\gamma_{i}\gamma_{i + 1}$}. For
  an illustration see Figure~\ref{fig:cornertangent} (right).
\end{defn}

\begin{thm}
  \label{thm:mainspacial}
  Let $s: \R \to \R^3$ be a sufficiently smooth planar curve and let $u,
  \eps \in \R$. Let further $\gamma: \Z \to \R^3$ be the discrete curve
  $\gamma_k = \gamma(k) = s(u + (2 k - 1)\eps)$.
  All the approximation results from Theorem~\ref{thm:main} apply to space
  curves in $\R^3$.
\end{thm}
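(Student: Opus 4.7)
The plan is to reduce to the planar Theorem~\ref{thm:main} via a single M\"obius transformation, exploiting that both the smooth curvature circle and the discrete construction behave naturally under M\"obius maps. I will first replace $s$ by a curve lying on the osculating sphere so that everything can be flattened, then apply the planar result, and finally transfer back.

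Using Lemma~\ref{lem:osculating} I pick a smooth curve $\hat s$ on the osculating sphere $\Sigma$ of $s$ at $u$ which agrees with $s$ in derivatives up to order three; in particular $\hat s$ and $s$ share tangent, normal, curvature and smooth curvature circle at $u$, and the samples $\hat\gamma_k := \hat s(u + (2k - 1)\eps)$ differ from $\gamma_k$ by $\landau(\eps^4)$. I then fix a M\"obius transformation $M$ sending $\Sigma$ onto a plane $\Pi \cong \C$. The image $\tilde s := M \circ \hat s$ is a smooth planar curve in $\Pi$ whose samples $\tilde{\hat\gamma}_k = M(\hat\gamma_k)$ lie in $\Pi$, so Theorem~\ref{thm:main} applies directly and yields the three approximation statements in $\Pi$: $\tilde p_{bc} = \tilde s(u) + \landau(\eps^3)$, the planar discrete curvature center is $\landau(\eps^2)$-close to the planar smooth center, and the planar discrete curvature is $\landau(\eps^2)$-close to the smooth one.

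Next I transfer back via $M^{-1}$. By Lemma~\ref{lem:spherical} and the very definition of the spatial discrete curvature circle, which was set up precisely so that $M$ sends it onto the planar discrete curvature circle of $M(\hat\gamma)$, the map $M^{-1}$ sends $\tilde p_{bc}$ to $p_{bc}(\hat\gamma)$ and the planar discrete curvature circle to the spatial discrete curvature circle of $\hat\gamma$; moreover $M^{-1}$ sends the planar smooth curvature circle of $\tilde s$ at $u$ to that of $\hat s$ at $u$, which coincides with the smooth curvature circle of $s$ at $u$. Since $M^{-1}$ is smooth on a neighborhood of the relevant bounded region, Euclidean $\landau(\eps^k)$ estimates are preserved under $M^{-1}$, and Hausdorff closeness of two circles with radii bounded away from zero translates into closeness of their centers and radii at the same rate. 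This yields the three approximation statements for $\hat\gamma$.

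The final step is to pass from $\hat\gamma$ to $\gamma$. Since $\gamma_k = \hat\gamma_k + \landau(\eps^4)$ and the quantities $p_{bc}$, $m_0$, $\kappa_0$ are rational functions of the four sample points whose leading $\eps$-behavior is captured by Lemmas~\ref{lem:pbc}--\ref{lem:pda}, replacing $\hat\gamma$ by $\gamma$ changes each output by at most $\landau(\eps^4)$, which is absorbed into the $\landau(\eps^2)$ and $\landau(\eps^3)$ errors already present. I expect this last step to be the main obstacle: one must verify that the sensitivity of $p_{bc}$, $m_0$, and $\kappa_0$ to sample perturbations remains controlled as the four samples collapse to a point, despite the apparent degeneracy. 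This is resolved by observing that the explicit $\eps$-expansions from the planar analysis depend smoothly on the low-order Taylor data of the underlying curve, so that an $\landau(\eps^4)$ perturbation of that data produces an $\landau(\eps^4)$ perturbation of the output at each order.
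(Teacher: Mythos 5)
Your proposal is correct and follows essentially the same route as the paper: replace $s$ by the third-order-contact curve $\hat s$ on the osculating sphere (absorbing the resulting $\landau(\eps^4)$ perturbation of the sample points), flatten the sphere by a M\"obius transformation to invoke the planar Theorem~\ref{thm:main}, and check that mapping back does not degrade the approximation order of the circle's center and radius. The only cosmetic difference is that the paper uses an explicit stereographic projection and verifies the center estimate by a closed formula, whereas you argue via smoothness of $M^{-1}$ and Hausdorff closeness of circles with radii bounded away from zero, which amounts to the same control.
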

\begin{proof}
  At first we convince ourselves that it is sufficient to replace the
  curve $s$ by the curve $\hat s$ on the osculating sphere
  (Lemma~\ref{lem:osculating}).
  So instead of $\gamma_k$ we use $\hat \gamma_k = \hat s(u + (2 k -
  1)\eps)$ for the computation of the discrete curvature circle.
  We have $\hat \gamma_k = \gamma_k + \landau(\eps^4)$ and therefore
  $$
    f(\hat \gamma_{i - 1}, \hat \gamma_i, \hat \gamma_{i + 1}, 
    \hat \gamma_{i + 2})
    =
    f(\gamma_{i - 1}, \gamma_i, \gamma_{i + 1}, \gamma_{i + 2})
    + \landau(\eps^4),
  $$
  i.e., the four points for which we construct the discrete curvature circle
  are $\landau(\eps^4)$-close to the points on the actual discrete
  curvature circle. And the center of the replacing curvature circle is
  therefore also $\landau(\eps^4)$-close to the actual circle since the
  center of the circumcircle of a triangle $a, b, c \in \R^3$ is
  $$
    \frac{(\|a - c\|^2 (b - c) - \|b - c\|^2 (a - c)) \times ((a - c)
    \times (b - c))}{2 \|(a - c) \times (b - c)\|^2} + c.
  $$
  So now we know that it is sufficient to show the $3$-space version of
  Theorem~\ref{thm:main} for $\hat s$ instead of $s$. After a
  stereographic projection from the osculating sphere to the complex
  plane we arrive at the case of planar curves
  (Sec.~\ref{sec:curvatureplanar}) for which Theorem~\ref{thm:main} holds.
  The only thing left to prove is that the stereographic projection does
  not change the approximation order of the center of the curvature
  circle. 
  
  Let $m, r$ denote the center and radius of the smooth curvature circle
  of the planar curve, and let $m_0(\eps), r_0(\eps)$ denote the curvature
  circle of $\gamma$. From Theorem~\ref{thm:main} we know that 
  $$
    m = m_0(\eps) + \landau(\eps^2)
    \quad\text{and}\quad
    r = r_0(\eps^2) + \landau(\eps^2).
  $$
  After mapping a circle in $\C$ with center $m = m_1 + i m_2$ and radius
  $r$ stereographically to the sphere we obtain 
  $$
    \phi(m, r) 
    :=
    \frac{1}{r^2 - 2 r^2 (|m|^2 - 1) + (|m|^2 + 1)^2}
    \left(
    \begin{array}{c}
      2 m_1 (1 - r^2 + |m|^2)
      \\
      2 m_2 (1 - r^2 + |m|^2)
      \\
      (r^2 - 1 - |m|^2) (r^2 + 1 - |m|^2)
    \end{array}
    \right)
  $$
  for the new center. Therefore 
  $$
    \phi(m_0, r_0) = \phi(m, r) + \landau(\eps^2),
  $$
  i.e., the centers are $\landau(\eps^2)$-close.
\end{proof}

\section{Torsion}
\label{sec:torsion}

We define the torsion for discrete curves again partially in terms of the
cross\dash ratio. But before we do that we have to consider the right
formulation of the torsion of smooth curves.

\subsection{Torsion for smooth curves}

Let us reformulate the common notation of the torsion $\tau$ (see
Equation~\eqref{eq:curvaturetorsion}):
\begin{align*}
  \tau 
  = 
  -\frac{\la s' \times s'', s'''\ra}{\|s' \times s''\|^2}
  = 
  -\frac{\det(s', s'', s''')}{\|s' \times s''\|^2}
  = 
  -\frac{\det(s''', s', s'')}{\|s' \times s''\|^2}
  = 
  \frac{\la s' \times s''', s''\ra}{\|s' \times s''\|^2}
  = 
  \frac{\la s' \times s''', s''\ra}{\kappa^2 \|s'\|^6}.
\end{align*}
The normal unit vector $N$ is the cross product of the binormal unit
vector $B$ and the tangent unit vector $T$ and therefore reads
\begin{align*}
  N 
  = 
  B \times T
  =
  \frac{s' \times s''}{\|s' \times s''\|}
  \times
  \frac{s'}{\|s'\|}
  =
  \frac{s'' \la s' ,s' \ra - s' \la s', s''\ra}{\|s'\| \|s' \times s''\|}
  =
  \frac{1}{\|s'\|^2 \kappa} s''
  -\frac{\la s', s'' \ra}{\|s'\| \|s' \times s''\|} s'.
\end{align*}
Consequently,
\begin{equation}
  \label{eq:tau}
  \tau 
  = 
  \frac{\la s' \times s''', s''\ra}{\kappa^2 \|s'\|^6}
  = 
  \frac{\la s' \times s''', \|s'\|^2 \kappa N\ra}{\kappa^2 \|s'\|^6}
  = 
  \frac{\la s' \times s''', N\ra}{\kappa \|s'\|^4}.
\end{equation}
We will come back to that formulation of $\tau$ in the proof of
Theorem~\ref{thm:torsion}.

\subsection{Discrete Frenet frame}
\label{subsec:frenetframe}

There is a natural way in our setting to define a discrete Frenet frame.
Theorem~\ref{thm:main} implies that $p_{bc} = p_{\gamma_{i} \gamma_{i +
1}}$ is a good discrete candidate for a point where the curvature circle
should be in tangential contact with the curve as $p_{bc}$ is a third
order approximation of $s(u)$. So it is sensible to choose the discrete
unit tangent vector $T_i$ to be in tangential contact with the curvature
circle at $p_{bc}$. It is therefore equally natural to define the normal
unit vector $N_i$ to be the normal of the curvature circle at $p_{bc}$.
Consequently, the binormal vector $B_i$ should be orthogonal to $N_i$ and
$T_i$ (see Figure~\ref{fig:frenet}).

\begin{figure}[t]
  \begin{minipage}[t]{.25\textwidth}
    \begin{overpic}[width=\textwidth]{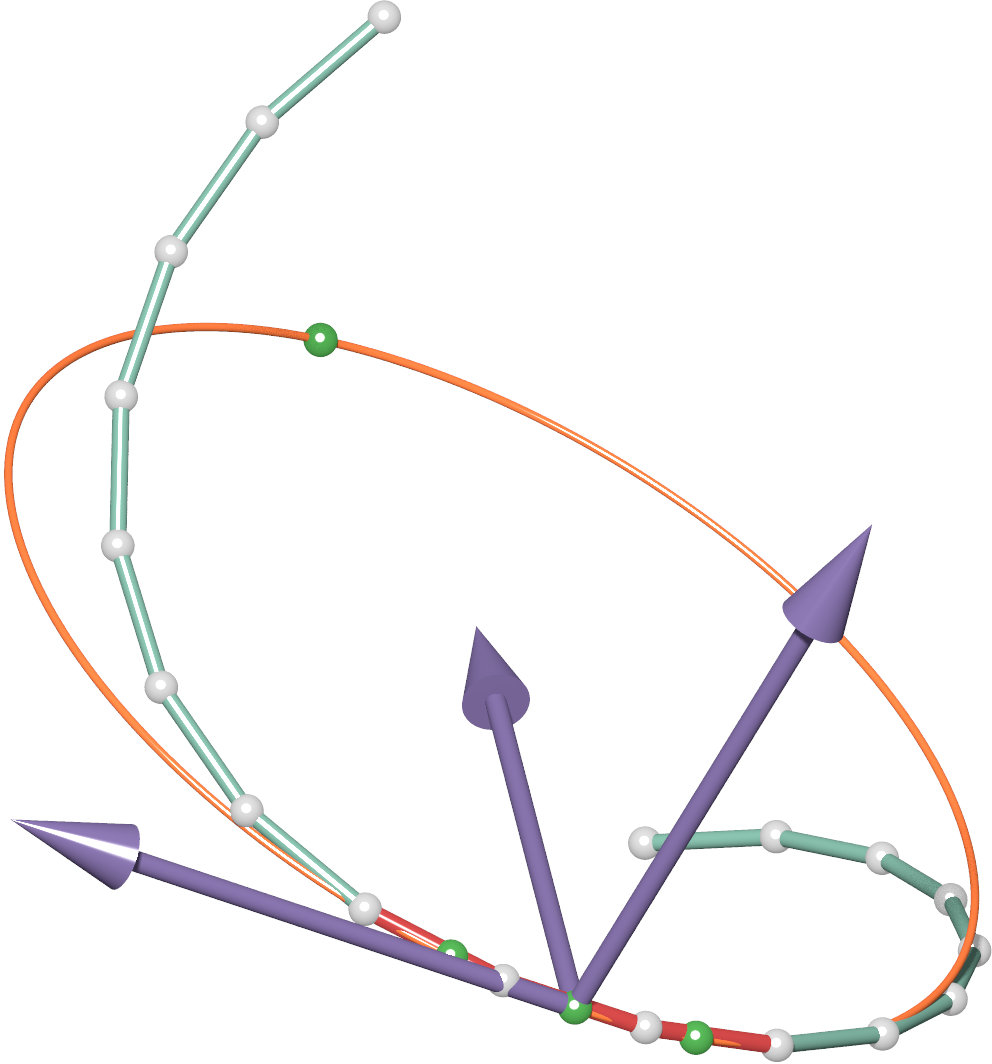}
      \put(4,7){\small$T$}
      \put(83,48){\small$B$}
      \put(50,37){\small$N$}
      \put(39,15){\small$p_{ab}$}
      \put(47,-1){\small$p_{bc}$}
      \put(64,6){\small$p_{cd}$}
      \put(26,60){\small$p_{da}$}
      \put(63,55){\small$k_i$}
    \end{overpic}
  \end{minipage}
  \hfill
  \begin{minipage}[b]{.71\textwidth}
  \caption{A discrete space curve with discrete curvature
  circle $k_i$. The discrete tangent vector $T$ of the curve is defined to
  be the tangent vector of the curvature circle at $p_{bc}$. The discrete
  normal vector $N$ lies in the plane of the circle and orthogonal to $T$
  and the binormal vector $B$ is orthogonal to both.
    }
  \label{fig:frenet}
  \end{minipage}
\end{figure}

\begin{lem}
  \label{lem:n}
  Let $s: \R \to \R^3$ be a sufficiently smooth curve and let $u,
  \eps \in \R$. Let further $\gamma: \Z \to \C$ be the discrete curve
  $\gamma_k = \gamma(k) = s(u + (2 k - 1)\eps)$.
  Then the discrete unit normal $N_i$ is a second order approximation of the
  smooth normal $N$, i.e.,
  $$
  N_i = N + \landau(\eps^2).
  $$
\end{lem}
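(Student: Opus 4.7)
The plan is to mimic the proof of Theorem~\ref{thm:zweibein}, but now invoking the space-curve approximation results of Theorem~\ref{thm:mainspacial} instead of those of Theorem~\ref{thm:main}. By definition, the discrete normal $N_i$ is the unit vector pointing from the tangency point $p_{bc}=p_{\gamma_i\gamma_{i+1}}$ to the center $m_i$ of the discrete curvature circle $k_i$, up to orientation, so I would simply write
\[
N_i \;=\; \frac{m_i - p_{bc}}{\|m_i - p_{bc}\|}
\]
and substitute the two asymptotic expansions obtained from Theorem~\ref{thm:mainspacial}.

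The key inputs are: first, $p_{bc} = s(u) + \landau(\eps^3)$, the third-order approximation of the base point; and second, $m_i = s(u) + \frac{1}{\kappa(u)}N(u) + \landau(\eps^2)$, the second-order approximation of the curvature center. Subtracting these yields
\[
m_i - p_{bc} \;=\; \frac{1}{\kappa(u)}\,N(u) + \landau(\eps^2),
\]
whose Euclidean norm is $\frac{1}{|\kappa(u)|} + \landau(\eps^2)$ since $N$ is a unit vector and the dominant term is nonzero (assuming the curve is non-degenerate at $u$, i.e.\ $\kappa(u)\neq 0$, which is the standing assumption wherever curvature circles are defined). Dividing the vector by its norm gives
\[
N_i \;=\; \frac{\kappa(u)}{|\kappa(u)|}\,N(u) + \landau(\eps^2),
\]
so that, after fixing the sign by choosing the orientation of $N_i$ consistently with $N$, we obtain $N_i = N + \landau(\eps^2)$.

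There is essentially no obstacle here beyond verifying that the error analysis from the planar statement really does carry over in $\R^3$, which is exactly what Theorem~\ref{thm:mainspacial} provides (via the osculating sphere and the M\"obius reduction to the planar situation). The one subtle point worth flagging is orientation: both $N$ and $N_i$ are defined only up to sign, so the statement ``$N_i = N + \landau(\eps^2)$'' must be read as ``after choosing compatible orientations,'' exactly as in Theorem~\ref{thm:zweibein}. Once this is understood, the computation above is a direct transcription of the planar argument and the proof is a few lines long.
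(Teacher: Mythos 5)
Your proposal is correct and is essentially the paper's own argument: the paper likewise writes $N_i$ as the unit vector between $p_{\gamma_i\gamma_{i+1}}$ and the center $m_i$, substitutes the expansions $p_{bc}=s+\landau(\eps^3)$ and $m_i=s+\frac1\kappa N+\landau(\eps^2)$ from Theorem~\ref{thm:mainspacial}, and normalizes. The only difference is cosmetic (your sign convention and the explicit remark on orientation, which the paper also treats only implicitly).
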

\begin{proof}
  $$
    N_i 
    =
    \frac{p_{bc} - m_0}{\|p_{bc} - m_0\|}
    \overset{(*)}{=}
    \frac{s + \landau(\eps^3) - m + \landau(\eps^2)}
    {\|s + \landau(\eps^3) - m + \landau(\eps^2)\|}
    =
    \frac{s - m}{\|s - m\|} + \landau(\eps^2)
    =
    N + \landau(\eps^2),
  $$
  where we used Theorem~\ref{thm:mainspacial} at $(*)$.
\end{proof}

\begin{lem}
  With the same assumptions as in Lemma~\ref{lem:n} we obtain
  $$
  T_i = T + \landau(\eps^2).
  $$
\end{lem}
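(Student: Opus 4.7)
The plan is to express $T_i$ in terms of a chord of the curvature circle and the normal $N_i$, and then to perform asymptotic analysis in the spirit of Lemma~\ref{lem:n}.

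Since $p_{bc}$ and $p_{cd}$ both lie on $k_i$, the chord $v := p_{cd} - p_{bc}$ lies in the plane of $k_i$, and that same plane contains $N_i = (p_{bc} - m_0)/\|p_{bc} - m_0\|$. Within this plane $T_i$ is characterized (up to sign) by $T_i \perp N_i$, so
\begin{equation*}
  T_i \;=\; \pm\,\frac{v - \langle v, N_i\rangle N_i}{\|v - \langle v, N_i\rangle N_i\|}.
\end{equation*}
This reduces the task to a Taylor expansion of the right hand side.

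Next I would substitute the expansions. Lemma~\ref{lem:pbc} gives $p_{bc} = s + \landau(\eps^3)$, and the same argument as in Lemma~\ref{lem:pda}, which uses only the third order jet of $s$ and therefore transfers without change to three dimensions (compare the proof of Theorem~\ref{thm:mainspacial}), gives $p_{cd} = s + \sqrt{3}\,\eps\, s' + \tfrac{3}{2}\,\eps^2\, s'' + \landau(\eps^3)$. Hence
\begin{equation*}
  v \;=\; \sqrt{3}\,\eps\, s' + \tfrac{3}{2}\,\eps^2\, s'' + \landau(\eps^3).
\end{equation*}
Using $N_i = N + \landau(\eps^2)$ from Lemma~\ref{lem:n} together with $\langle s', N\rangle = 0$, the $\eps\, s'$ term contributes nothing to $\langle v, N\rangle$, so $\langle v, N_i\rangle = \langle v, N\rangle + \landau(\eps^3) = \landau(\eps^2)$. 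Decomposing $s''$ in the osculating frame as $s'' = \alpha\, s' + \beta\, N$, the $\tfrac{3}{2}\eps^2 \beta N$ contribution to $v$ is precisely cancelled by the $\langle v, N\rangle N$ term, yielding
\begin{equation*}
  v - \langle v, N_i\rangle N_i \;=\; \bigl(\sqrt{3}\,\eps + \tfrac{3}{2}\,\alpha\,\eps^2\bigr)\, s' + \landau(\eps^3).
\end{equation*}

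Finally I would normalize. The scalar factor multiplying $s'$ cancels between numerator and denominator, and the $\landau(\eps^3)$ remainder is demoted to $\landau(\eps^2)$ after the division by the leading $\sqrt{3}\,\eps$. This gives $T_i = \pm s'/\|s'\| + \landau(\eps^2) = T + \landau(\eps^2)$ once the sign is fixed by the orientation of the curve. The main subtlety is order bookkeeping: the chord direction $v/\|v\|$ is only a first order approximation of $T$, and the upgrade to second order rests on subtracting the $N_i$-component of $v$, which eliminates the part of the $\eps^2 s''$ term lying outside the $T$-direction.
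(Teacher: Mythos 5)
Your argument is correct in substance, but it takes a genuinely different route from the paper. The paper never looks at $p_{cd}$: it quotes the planar statement (Theorem~\ref{thm:zweibein}) and then shows that an inversion maps $\eps^2$-close vectors attached at $\eps^3$-close points to $\eps^2$-close vectors, so the order of approximation survives the M{\"o}bius transformation that carries the planar situation to the osculating sphere. You instead work directly in $\R^3$, representing $T_i$ as the normalized projection of the chord $p_{cd}-p_{bc}$ onto the orthogonal complement of $N_i$ in the plane of $k_i$ and expanding; this is sound (the chord lies in the circle plane, $s''$ lies in the span of $s'$ and $N$, the projection removes exactly the $N$-part of the $\tfrac32\eps^2 s''$ term, and the normalization demotes the $\landau(\eps^3)$ remainder to $\landau(\eps^2)$), and it buys a computation that is self-contained in space once the expansions of $p_{bc}$ and $p_{cd}$ are available. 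The one step you should not wave through is precisely that availability: Lemma~\ref{lem:pda} is proved by a complex-number computation, and Theorem~\ref{thm:mainspacial} only transfers the statements of Theorem~\ref{thm:main} (about $\kappa_0$, $m_0$ and $p_{bc}$), not the expansion of $p_{cd}$; ``uses only the third order jet'' is not by itself a reason that the noncommutative quaternionic insertion rule reproduces the same coefficients. The claim $p_{cd}=s+\sqrt{3}\,\eps\,s'+\tfrac32\eps^2 s''+\landau(\eps^3)$ is true in $\R^3$, but to close the argument you must either redo the expansion quaternionically or run the transfer explicitly: replace $s$ by the curve $\hat s$ on the osculating sphere, project stereographically, apply the planar Lemma~\ref{lem:pda}, and check that the inverse projection preserves the form of the expansion --- which it does because the right-hand side is the $2$-jet of the curve at the shifted parameter $u+\sqrt{3}\,\eps$, and smooth maps conjugate such jets. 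With that step made explicit your proof is complete; in effect you shift the M{\"o}bius-stability burden from the tangent vector, where the paper places it, to the point $p_{cd}$.
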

\begin{proof}
  Theorem~\ref{thm:zweibein} implies $T_i = T + \landau(\eps^2)$ for the
  planar case. What remains to verify is that a M{\"o}bius transformation
  does not change this order.

  Any vector $v$ attached at a point $p$ can be represented as the
  derivative of a straight line:
  $$
    [p + t v]_{t = 0}.
  $$
  Consequently, an inversion maps that vector to 
  $$
    \left[\frac{d}{dt} \frac{p + t v}{\|p + t v\|^2}\right]_{t = 0}
    =
    \frac{\|p\|^2 v - 2 \la p, v \ra p}{\|p\|^4}.
  $$
  In our case the vector $T_i$ is attached at point $p_{bc}$.
  Since $T_i = T + \landau(\eps^2)$ and $p_{bc} = s + \landau(\eps^3)$ for
  planar curves, we obtain for the tangent vector after inversion
  \begin{align*}
    \frac{\|p_{bc}\|^2 T_i - 2 \la p_{bc}, T_i \ra p_{bc}}{\|p_{bc}\|^4}
    &=
    \frac{\|s + \landau(\eps^3)\|^2 (T + \landau(\eps^2))
    - 2 \la s + \landau(\eps^3), T + \landau(\eps^2) \ra (s +
    \landau(\eps^3))}{\|s + \landau(\eps^3)\|^4}
    \\
    &=
    \frac{\|s\|^2 T - 2 \la s, T\ra s}{\|s\|^4} + \landau(\eps^2).
  \end{align*}
  Therefore, M{\"o}bius transformations map $\eps^2$-close vectors
  attached at $\eps^3$-close points to $\eps^2$-close vectors.
\end{proof}

\begin{cor}
  With the same assumptions as in Lemma~\ref{lem:n} the discrete Frenet
  frame $(T_i, N_i, B_i)$ is a second order approximation of the smooth
  Frenet frame $(T, N, B)$.
\end{cor}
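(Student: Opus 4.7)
The corollary follows almost immediately from the two preceding lemmas together with the definition of the binormal as a cross product. My plan is as follows.

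The two preceding lemmas already establish that $T_i = T + \landau(\eps^2)$ and $N_i = N + \landau(\eps^2)$, so two thirds of the work is done. Since by construction (see Section~\ref{subsec:frenetframe} and Figure~\ref{fig:frenet}) the discrete binormal is defined as $B_i = T_i \times N_i$, and the smooth binormal satisfies $B = T \times N$, all that remains is to transfer the second order estimate from $T_i, N_i$ to their cross product.

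For this I would write
$$
B_i - B = T_i \times N_i - T \times N = (T_i - T) \times N_i + T \times (N_i - N),
$$
and use that $T_i, N_i, T, N$ are unit vectors so their norms are bounded by $1$. By bilinearity of the cross product and the triangle inequality,
$$
\|B_i - B\| \le \|T_i - T\|\,\|N_i\| + \|T\|\,\|N_i - N\| = \landau(\eps^2) + \landau(\eps^2) = \landau(\eps^2).
$$
Combined with the estimates for $T_i$ and $N_i$, this gives the claimed second order approximation of the full frame $(T_i, N_i, B_i)$.

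The only subtlety — and it is really just a matter of bookkeeping rather than a genuine obstacle — is orientation: one must ensure that the discrete tangent and normal vectors have been chosen consistently with the smooth ones (the preceding lemmas were stated ``after appropriate orientation''), so that $B_i$ and $B$ point to the same side. Once the orientations are fixed the above three-line computation finishes the proof.
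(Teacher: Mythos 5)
Your proposal is correct and matches the paper's (implicit) reasoning: the paper states this corollary without a separate proof precisely because it follows from the two preceding lemmas together with the definition of $B_i$ as the vector completing the orthonormal frame, which is exactly the cross-product estimate you spell out. Your orientation remark mirrors the ``after appropriate orientation'' caveat already present in the lemmas, so nothing is missing.
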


\subsection{Torsion for discrete curves}

In this section we relate the torsion of a discrete curve to the
cross\dash ratio of four successive vertices of the curve. As the
real part and the length of the imaginary part of the quaternionic
cross\dash ratio is M{\"o}bius invariant but the torsion is not, the
definition must also include other quantities that are not M{\"o}bius
invariant, in our case curvature and length.  In Theorem~\ref{thm:torsion}
we again use asymptotic analysis to justify our definition of the discrete
torsion. 
\begin{defn}
  Let $\gamma: \Z \to \R^3 \cong \IM \HH$ be a discrete curve, let
  $\kappa_i$ be the discrete curvature at the edge $\gamma_{i} \gamma_{i +
  1}$, and let $N_i$ denote the discrete normal unit vector. Then, we call 
  $$
    \tau_i
    :=
    -
    \frac{9 \la \IM 
    \crr(\gamma_{i - 1}, \gamma_{i}, \gamma_{i + 1}, \gamma_{i + 2}), N_i
    \ra}{2 \kappa_i \|\gamma_{i}\! -\! \gamma_{i + 1}\|^2}
  $$
  \emph{(discrete) torsion of $\gamma$ at the edge $\gamma_{i}
  \gamma_{i + 1}$}.
\end{defn}

\begin{prop}
  The discrete torsion vanishes for planar discrete curves.
\end{prop}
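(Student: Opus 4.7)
The plan is to decompose the ambient $\R^3$ along the affine plane $\Pi$ containing the planar curve and to show that the two factors of the inner product $\la \IM \crr, N_i \ra$ in the numerator of $\tau_i$ are forced to be orthogonal for purely geometric reasons: the imaginary part of $\crr(\gamma_{i-1},\gamma_i,\gamma_{i+1},\gamma_{i+2})$ is perpendicular to $\Pi$, while the discrete curvature-circle normal $N_i$ is parallel to $\Pi$.

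For the first part I would revisit the corner-tangent identity used in the proof of Lemma~\ref{lem:normalscr}, which gives
$$
  \crr(a,b,c,d) = \big[\la t_1^{-1}, t_2\ra,\, -t_1^{-1} \times t_2\big],
$$
with $t_1 := t[c,a,b]$ and $t_2 := t[d,a,c]$. By Lemma~\ref{lem:tangentVector} these corner tangents are tangent at $a$ to the circumcircles of the triangles $(a,b,c)$ and $(c,d,a)$ respectively. When all four vertices lie in $\Pi$, both circumcircles lie in $\Pi$ as well, so $t_1$ and $t_2$ are parallel to $\Pi$, and therefore $t_1^{-1} \times t_2$ is perpendicular to $\Pi$. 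This yields $\IM \crr(\gamma_{i-1},\gamma_i,\gamma_{i+1},\gamma_{i+2}) \perp \Pi$; if the four points happen to be concyclic the imaginary part vanishes outright.

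For the second part I would verify that the quaternionic point-insertion $f$ maps coplanar inputs to coplanar outputs, so that the curvature circle $k_i$ lies in $\Pi$ and hence $N_i$ does too. Let $n$ denote a unit normal to $\Pi$. By the previous step $\IM \crr(c,a,b,d)$ is a scalar multiple of $n$, and therefore so is the imaginary part of its principal square root. The remaining factors $b-a$, $(c-a)^{-1}$, $b$, $c$ entering $f$ are imaginary quaternions parallel to $\Pi$. A short check using the quaternion multiplication rule shows that the subspace of quaternions of the form $[\alpha,\, v + \beta n]$ with $\alpha,\beta \in \R$ and $v$ parallel to $\Pi$ is closed under the products, sums, and inverse appearing in $f$, and that the output $f(a,b,c,d)$ is in fact purely imaginary and parallel to $\Pi$. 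Consequently $p_{ab}, p_{bc}, p_{cd}, p_{da} \in \Pi$, the curvature circle $k_i$ lies in $\Pi$, and its unit normal $N_i$ at $p_{bc}$ is parallel to $\Pi$.

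Combining the two steps, $\la \IM \crr,\, N_i\ra$ pairs a vector perpendicular to $\Pi$ against a vector parallel to $\Pi$, so it vanishes and $\tau_i = 0$. The main obstacle is the bookkeeping in step two: one must track that every operation inside $f$ respects the ``plane plus scalar multiple of $n$'' block structure, so that no imaginary component transverse to it ever appears. Once that closure is established, the geometric orthogonality in step one does the rest.
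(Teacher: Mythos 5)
Your overall strategy is exactly the paper's: by Lemma~\ref{lem:normalscr} (which already covers the degenerate ``plane'' case) the imaginary part of $\crr(\gamma_{i-1},\gamma_i,\gamma_{i+1},\gamma_{i+2})$ is perpendicular to the plane $\Pi$ of the curve, while $N_i$ lies in $\Pi$ because the discrete curvature circle does, so the inner product in $\tau_i$ vanishes. Your step one is just a re-derivation of that lemma via the corner tangents and is fine.

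The one defect is in how you justify step two. The set you claim is closed, namely the quaternions $[\alpha, v+\beta n]$ with $\alpha,\beta\in\R$ and $v$ parallel to $\Pi$, is \emph{all} of $\HH$ (since $v+\beta n$ ranges over all of $\R^3$), so the closure statement is vacuous and in particular does not yield $f(a,b,c,d)\in\Pi$. The bookkeeping that actually works is the graded splitting $\HH = C_n \oplus P$ with $C_n := \{[\alpha,\beta n]\}$ and $P := \{[0,v] : v \parallel \Pi\}$: one has $P\cdot P \subset C_n$, $C_n\cdot P \subset P$, $P\cdot C_n \subset P$, $C_n$ is closed under products, inverses and the principal square root, and $P$ is closed under inversion. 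Then $(b-a)(c-a)^{-1}\sqrt{\crr(c,a,b,d)} \in C_n$, so the first factor of $f$ and its inverse lie in $C_n$, the second factor lies in $C_n\cdot P + P \subset P$, hence $f \in P \subset \Pi$, and the curvature circle together with $N_i$ lies in $\Pi$ as needed. Alternatively you can avoid the computation entirely, as the paper implicitly does: for coplanar points the cross-ratios are complex (imaginary parts parallel to $n$), so the quaternionic construction reduces verbatim to the planar construction of Section~\ref{sec:curvatureplanar}, whose circle lies in $\Pi$. With that repair your argument coincides with the paper's proof.
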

\begin{proof}
  Planarity of the discrete curve and Lemma~\ref{lem:normalscr} imply that
  the imaginary part of the cross\dash ratio in the definition of the
  torsion is perpendicular to that plane. The normal vector $N_i$ on the
  other hand is contained in the plane. Therefore the two vectors are
  orthogonal and the discrete torsion vanishes.
\end{proof}

\begin{thm}
  \label{thm:torsion}
  Let $s: \R \to \R^3$ denote a sufficiently smooth curve, let $u, \eps
  \in \R$ and let the discrete curve $\gamma: \Z \to \C$ with $\gamma_k =
  \gamma(k) = s(u + (2 k - 1)\eps)$ be a sampling of $s$. Then
  $$
    \tau_0 = \tau + \landau(\eps^2).
  $$
\end{thm}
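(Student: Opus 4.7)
The plan is to use the reformulation $\tau = \langle s' \times s''', N\rangle / (\kappa \|s'\|^4)$ from Equation~\eqref{eq:tau} and to match the factors in the definition of $\tau_0$ against those in $\tau$ by Taylor expansion. All the non-cross-ratio ingredients are already in place: Theorem~\ref{thm:mainspacial} gives $\kappa_0 = \kappa + \landau(\eps^2)$ (recall $\kappa \geq 0$ for space curves), Lemma~\ref{lem:n} gives $N_0 = N + \landau(\eps^2)$, and a direct Taylor expansion of $\gamma_0 - \gamma_1 = s(-\eps) - s(\eps)$ yields $\|\gamma_0 - \gamma_1\|^2 = 4\eps^2 \|s'\|^2 + \landau(\eps^4)$. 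Substituting into the definition of $\tau_0$, the denominator equals $8\kappa\eps^2\|s'\|^2(1 + \landau(\eps^2))$, and the problem reduces to establishing
\begin{equation*}
-9\, \langle \IM \crr(\gamma_{-1}, \gamma_0, \gamma_1, \gamma_2), N\rangle = \frac{8 \eps^2 \langle s' \times s''', N\rangle}{\|s'\|^2} + \landau(\eps^4).
\end{equation*}

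To compute the quaternionic cross-ratio, I use $v^{-1} = -v/\|v\|^2$ for imaginary $v$ to rewrite
\begin{equation*}
\crr(a, b, c, d) = \frac{(a-b)(b-c)(c-d)(d-a)}{\|b-c\|^2 \|d-a\|^2}.
\end{equation*}
The four vector differences have the Taylor expansions used in the proof of Lemma~\ref{lem:hilfslemmaeins}, now interpreted in $\IM\HH$ rather than $\C$. Expanding the product of the four imaginary quaternions via $[0,u][0,v] = [-\langle u,v\rangle, u\times v]$ and extracting the imaginary part, both $(a-b)\times(b-c)$ and $(c-d)\times(d-a)$ are multiples of $s'\times s''$ at leading order $\eps^3$, and the resulting order-$\eps^5$ contributions to the imaginary part of the numerator cancel exactly. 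At order $\eps^6$, the surviving imaginary part splits into a multiple of $\|s'\|^2(s'\times s''')$ and a multiple of $\langle s', s''\rangle(s'\times s'')$. Dividing by $\|b-c\|^2\|d-a\|^2 = 144\eps^4\|s'\|^4 + \landau(\eps^6)$ and pairing with $N$ --- noting that $s'\times s''$ is parallel to the binormal $B$ and hence orthogonal to $N$ --- yields the required identity, with coefficient $-8/(9\|s'\|^2)$ on $s'\times s'''$.

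The main obstacle is the bookkeeping of this fourth-order non-commutative quaternion product: each of the four differences must be expanded to a sufficiently high order, and the triple and quadruple products must be tracked carefully. The key structural point is that all \emph{planar} contributions to $\IM \crr$ (those depending only on $s'$ and $s''$) align with the binormal direction and are annihilated by the inner product with $N$, so the first \emph{non-planar} contribution, carried by $s'''$, is exactly the term reproducing the smooth torsion. Verifying the exact cancellation of the $\eps^5$-contributions and extracting the coefficient $-8/9$ at order $\eps^6$ is the routine but delicate core of the argument; once this is done, the final substitution into the definition of $\tau_0$ gives $\tau_0 = \tau + \landau(\eps^2)$.
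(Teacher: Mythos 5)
Your proposal follows essentially the same route as the paper: reduce via Eq.~\eqref{eq:tau} using Theorem~\ref{thm:mainspacial}, Lemma~\ref{lem:n} and the edge-length expansion, then Taylor-expand the quaternionic cross-ratio $\crr(\gamma_{-1},\gamma_0,\gamma_1,\gamma_2)$ --- exactly the content of the paper's preparatory lemma, whose leading imaginary part $-\tfrac{8\|s'\|^2\, s'\times s''' - 24\langle s',s''\rangle\, s'\times s''}{9\|s'\|^4}\eps^2$ matches your claimed structure and coefficient $-8/(9\|s'\|^2)$, with the $s'\times s''$ term annihilated by $\langle s'\times s'',N\rangle=0$ just as in the paper's step $(\S)$. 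The only differences are organizational (you put the inverted factors over the common denominator $\|b-c\|^2\|d-a\|^2$ rather than expanding each inverse separately, and you state the sharper $\landau(\eps^4)$ error actually needed), so this is essentially the paper's proof.
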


However, before we prove this theorem we need a preparatory lemma.

\begin{lem}
  Let $s: \R \to \C$ denote a sufficiently smooth curve, let $u,
  \eps \in \R$ and let the discrete curve $\gamma: \Z \to \C$ with
  $\gamma_k = \gamma(k) = s(u + (2 k - 1)\eps)$ be a sampling of
  $s$. 
  Let further $q_0$ denote the cross\dash ratio of four consecutive
  vertices $q_0 := \crr(\gamma_{-1}, \gamma_0, \gamma_1, \gamma_2)$. Then
  \begin{align*}
    \RE q_0 
    &=
    -\frac{1}{3} 
    - 
    \frac{-24 \la s', s''\ra^2 + 8 \|s'\|^2 \la s', s'''\ra + 12 \|s'\|^2
    \|s''\|^2}{9 \|s'\|^4} \eps^2 + \landau(\eps^3),
    \\
    \IM q_0
    &=
    -\frac{8 \|s'\|^2 s' \times s''' - 24 \la s', s'' \ra s' \times s''}
    {9 \|s'\|^4} \eps^2 + \landau(\eps^3).
  \end{align*}
\end{lem}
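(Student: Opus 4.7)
The plan is to expand the quaternionic cross\dash ratio $q_0=(a-b)(b-c)^{-1}(c-d)(d-a)^{-1}$ directly in powers of $\eps$. Set $A:=a-b$, $B:=b-c$, $C:=c-d$, $D:=d-a$. Each of these differences is a vector in $\R^3$, i.e.\ a purely imaginary quaternion, so by $v^{-1}=-v/\|v\|^2$ for $v\in\IM\HH$ and the fact that the positive real scalars $\|B\|^2,\|D\|^2$ commute with every quaternion,
\begin{equation*}
  q_0 \;=\; A\,B^{-1}\,C\,D^{-1} \;=\; \frac{A\,B\,C\,D}{\|B\|^2\,\|D\|^2}.
\end{equation*}
This reduces the problem to expanding a single quaternionic product in the numerator up to order $\eps^6$ and a real scalar in the denominator up to the same order, after which the ratio can be handled by (the trivial quaternionic version of) Lemma~\ref{lem:hilfslemmataylor}, since the denominator is real.

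Three of the required Taylor expansions are already written down just after Lemma~\ref{lem:hilfslemmaeins}; the two that are new are odd in $\eps$ because they pair symmetric arguments:
\begin{equation*}
  B = s(-\eps)-s(\eps) = -2\eps s' - \tfrac{1}{3}\eps^3 s''' + \landau(\eps^5),\qquad
  D = s(3\eps)-s(-3\eps) = 6\eps s' + 9\eps^3 s''' + \landau(\eps^5).
\end{equation*}
Using these one gets $\|B\|^2=4\eps^2\|s'\|^2+\tfrac{4}{3}\eps^4\la s',s'''\ra+\landau(\eps^6)$ and $\|D\|^2=36\eps^2\|s'\|^2+108\eps^4\la s',s'''\ra+\landau(\eps^6)$, whose product factors as $144\eps^4\|s'\|^4$ times a scalar $1+\landau(\eps^2)$ that is itself handled by a geometric-series expansion.

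The main step is the numerator. I would compute $AB$ and $CD$ first by the rule $[0,u][0,v]=[-\la u,v\ra,u\times v]$, which produces a scalar part (an inner product of the expansions) and an imaginary part (a cross product), and then multiply the two resulting quaternions by $[r,u][s,w]=[rs-\la u,w\ra,\,rw+su+u\times w]$. Writing $ABCD=[P,Q]$, the scalar $P$ collects inner products of $s',s'',s'''$ and the vector $Q$ collects the cross products $s'\times s''$ and $s'\times s'''$. At the leading $\eps^4$ order all four factors are parallel to $s'$, so $Q$ vanishes and $P=-48\|s'\|^4$, giving the advertised leading value $-1/3$; the next-order contribution $\eps^6$ then has to be read off carefully and divided by the denominator expansion to obtain the $\eps^2$ corrections stated in the lemma.

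The main obstacle is therefore not conceptual but bookkeeping: the $\eps^6$ coefficient of $ABCD$ collects contributions from every way of choosing an $\eps^2 s''$ or $\eps^3 s'''$ correction in one factor and the leading $\eps s'$ in the other three, and one must simplify triple quaternion products using $s'\times s'=0$ together with the Lagrange identity $u\times(v\times w)=v\la u,w\ra-w\la u,v\ra$ to reach the final intrinsic quantities $\la s',s''\ra^{2}$, $\la s',s'''\ra$, $\|s''\|^2$, $s'\times s''$, $s'\times s'''$. A sanity check that the imaginary part is indeed $\landau(\eps^2)$ (rather than $\landau(\eps)$) comes for free from the collinearity of the four leading vectors, and the appearance of the factor $9$ in the denominators in the stated formulas is explained by the factor $144=16\cdot 9$ in $\|B\|^2\|D\|^2$ after cancellation with the numerator coefficient $16$.
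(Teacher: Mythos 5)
Your proposal is correct and follows essentially the same route as the paper: a direct Taylor expansion of the four quaternionic factors of $\crr(\gamma_{-1},\gamma_0,\gamma_1,\gamma_2)$, using $v^{-1}=-v/\|v\|^2$ for imaginary quaternions — the paper expands the two inverted factors as their own series in $\eps$ before multiplying, while you defer the division by the real scalars $\|B\|^2\|D\|^2$ to the end, which is a purely organizational difference. Like the paper, you leave the final order-$\eps^6$ multiplication as bookkeeping, and your setup (expansions of $B$, $D$, $\|B\|^2$, $\|D\|^2$ and the leading-order check $-48\|s'\|^4/(144\|s'\|^4)=-1/3$) is accurate.
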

\begin{proof}
  We compute
  $$
    \crr(\gamma_{-1}, \gamma_0, \gamma_1, \gamma_2)
    =
    (\gamma_{-1} - \gamma_0)
    (\gamma_0 - \gamma_1)^{-1}
    (\gamma_1 - \gamma_2)
    (\gamma_2 - \gamma_{-1})^{-1}
  $$
  by first expressing each factor in terms of its Taylor expansion:
  \begin{align*}
    \gamma_{-1} - \gamma_0
    &= 
    -2 s' \eps + 4 s'' \eps^2 - \frac{13 s'''}{3} \eps^3 +
    \landau(\eps^4),
    \\
    \gamma_1 - \gamma_2
    &= 
    -2 s' \eps - 4 s'' \eps^2 - \frac{13 s'''}{3} \eps^3 +
    \landau(\eps^4),
  \end{align*}
  and now the inverted factors
  \begin{align*}
    (\gamma_0 - \gamma_1)^{-1}
    &=
    \Big(
      -2 s' \eps - \frac{s'''}{3} \eps^3 + \landau(\eps^4)
    \Big)^{-1}
    =
    \frac{2 s' \eps + \frac{s'''}{3} \eps^3 + \landau(\eps^4)}
    {4 \|s'\|^2 \eps^2 + \frac{4\la s', s'''\ra}{3} \eps^4
    + \landau(\eps^5)
    }
    \\
    &=
    \frac{s'}{2 \|s'\|^2 \eps}
    +
    \frac{\|s'\|^2 s''' - 2 \la s', s'''\ra s'}{12 \|s'\|^4} \eps
    + \landau(\eps^2),
  \end{align*}
  where the last equality holds since 
  $\frac{x_0 + x_2 \eps^2 + \landau(\eps^3)}{y_1 \eps + y_3 \eps^3 +
  \landau(\eps^4)} 
  = \frac{x_0}{y_1 \eps} + \frac{x_2 y_2 - x_0 y_3}{y_1^2} \eps
  + \landau(\eps^2)$.
  Analogously, we obtain
  \begin{align*}
    (\gamma_2 - \gamma_{-1})^{-1}
    = 
    -\frac{s'}{6 \|s'\|^2 \eps}
    +
    \frac{-\|s'\|^2 s''' + 2 \la s', s'''\ra s'}{4 \|s'\|^4} \eps
    + \landau(\eps^2).
  \end{align*}
  The above four factors are all purely imaginary quaternions.
  Multiplying these factors together in the right order yields the
  proposed real and imaginary part of the cross\dash ratio.
\end{proof}

So, let us now turn to our approximation result for the torsion:

\begin{proof}[Proof of Theorem~\ref{thm:torsion}]
  We show the formula at $u = 0$ and therefore $i = 0$:
  \begin{align*}
    \tau_0
    &=
    -\frac{9}{2} 
    \frac{\la \IM q_0, N_0\ra}{\kappa_0 \|\gamma_0 - \gamma_1\|^2}
    \overset{(*)}{=}
    -\frac{9}{2} 
    \frac{\la (-8 \|s'\|^2 s' \times s''' + 24 \la s', s''\ra s' \times
    s'') \eps^2 + \landau(\eps^3), N + \landau(\eps^2)\ra}
    {9 \|s'\|^4 (\kappa + \landau(\eps^2)) \|2 \eps s' + \landau(\eps^3)\|^2}
    \\
    &\overset{(\S)}{=}
    \frac{\la\|s'\|^2 s' \times s''' \eps^2, N \ra + \landau(\eps^3)}
    {\|s'\|^6 \kappa \eps^2 + \landau(\eps^2)}
    =
    \frac{\la s' \times s''', N \ra} {\|s'\|^4 \kappa} + \landau(\eps^2)
    \overset{\eqref{eq:tau}}{=}
    \tau + \landau(\eps^2),
  \end{align*}
  where we used 
  \begin{align*}
    \|\gamma_0 - \gamma_1\| 
    &= 
    \|s(-\eps) - s(\eps)\| 
    = 
    \|s - \eps s' + \frac{\eps^2}{2} + \landau(\eps^3) 
    - (s + \eps s' + \frac{\eps^2}{2} + \landau(\eps^3))\| 
    \\
    &=
    \|2 \eps s' + \landau(\eps^3)\|
  \end{align*}
  at $(*)$ and $\la s' \times s'', N\ra = 0$ at $(\S)$.
\end{proof}

\begin{rem}
  We have now a curvature and torsion for a discrete space curve as well
  as an osculating sphere and osculating circle. In the setting of smooth
  curves the oriented distance between the center of the osculating circle
  and the osculating sphere is
  $$
    \frac{\kappa'}{\kappa^2 \tau} 
  $$
  as follows immediately from the formula for the center of the osculating
  sphere, Eq.~\eqref{eq:osculatingsphere}.  We can therefore define
  a discrete version of $\kappa'$ as that value that fulfills the equation
  above by replacing smooth notions by their discrete counterparts.
\end{rem}

\section{Geometric properties}
\label{sec:geometric}

\begin{figure}[t]
  \begin{minipage}[t]{.56\textwidth}
    \begin{overpic}[width=.49\textwidth]{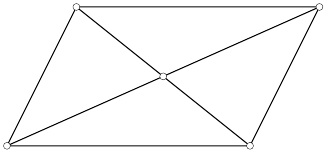}
      \lput(0,0){$a$}
      \lput(20,41){$c$}
      \rput(80,0){$b$}
      \rput(100,41){$d$}
      \cput(48,14){$f$}
    \end{overpic}
    \hfill
    \begin{overpic}[width=.46\textwidth]{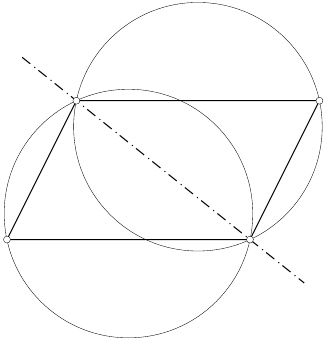}
      \lput(0,29){$a$}
      \lput(18,69){$c$}
      \rput(73,21){$b$}
      \rput(96,67){$d$}
    \end{overpic}
  \end{minipage}
  \hfill
  \begin{minipage}[b]{.42\textwidth}
  \caption{\emph{Left:} Any quadrilateral $a, b, c, d$ is M{\"o}bius
    equivalent to a parallelogram with $b - a = d - c$. In case of such a
    parallelogram we have $\crr(c, a, b, f) = -\sqrt{\crr(c, a, b, d)}$.
    \emph{Right:} The circumcircles of $a, b, c$ and $b, d, c$ are
    congruent. One of their two bisector circles is a straight line, the
    diagonal $cb$.
    }
  \label{fig:parallelogram}
  \end{minipage}
\end{figure}

Any quadrilateral is M{\"o}bius equivalent to a parallelogram, and even more
special, it is M{\"o}bius equivalent to a parallelogram $a, b, c, d$ with
$a = 0$ and $b = 1$ and $b - a = d - c$. See
Figure~\ref{fig:parallelogram} (left). Its cross\dash ratio is 
$$
\crr(c, a, b, d)
=
c^2.
$$
If $f$ denotes the intersection point of the diagonals then we obtain
$$
\crr(c, a, b, f)
=
-c,
$$
and therefore
$$
\crr(c, a, b, f)
=
-
\sqrt{\crr(c, a, b, d)}.
$$
This together with Lemma~\ref{lem:crequation} immediately implies the
following Lemma.
\begin{lem}
  Let $a, b, c, d$ be a parallelogram with $b - a = d - c$.
  Then the insertion point $f(a, b, c, d)$ corresponds to the intersection
  point of the diagonals.
\end{lem}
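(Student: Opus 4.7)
The plan is to leverage the observations made in the paragraph preceding the lemma, together with the M\"obius invariance established in Lemma~\ref{lem:crequation}, and then close the argument by a uniqueness remark about the fourth point of a cross-ratio.

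First, I would reduce to a normalized situation. Since any parallelogram with $b-a = d-c$ is M\"obius equivalent (by a similarity, which is a M\"obius transformation) to one with $a = 0$ and $b = 1$, and since both $f(a,b,c,d)$ and the diagonal intersection point are M\"obius covariant (the former by Lemma~\ref{lem:crequation}, the latter obviously under affine maps), it suffices to verify the claim for this normalized parallelogram. In that case $d = c + 1$, so a direct computation gives $\crr(c,a,b,d) = c^2$. Denoting by $g$ the intersection of the diagonals $ad$ and $bc$, an elementary calculation with the parametrizations $tc$ and $1 + s(c-1)$ of the two diagonals yields $g = c/(c+1) \cdot \text{(something tidy)}$; after simplifying, one finds $\crr(c,a,b,g) = -c$.

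Next, I would combine the two identities into
\[
\crr(c,a,b,g) \;=\; -c \;=\; -\sqrt{c^2} \;=\; -\sqrt{\crr(c,a,b,d)},
\]
so by Lemma~\ref{lem:crequation} both $g$ and $f(a,b,c,d)$ satisfy the same cross-ratio equation $\crr(c,a,b,\,\cdot\,) = -\sqrt{\crr(c,a,b,d)}$. Since the cross-ratio of three fixed distinct points with a variable fourth point is an injective function of the fourth point (it is a M\"obius transformation in that argument), this forces $f(a,b,c,d) = g$, which is the desired conclusion.

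The only subtle point I anticipate is the branch of the square root: the identity $\sqrt{c^2} = c$ requires that $c$ lies in the half-plane selected by the principal branch convention recalled in Section~\ref{subsec:quaternions}. If $c$ lies in the ``wrong'' half-plane one should work instead with $\sqrt{c^2} = -c$ and adjust signs; either way the computed cross-ratio $\crr(c,a,b,g) = \pm c$ matches $-\sqrt{c^2}$ up to the branch choice, which is precisely the setting (no ``zigzag'' degenerate cross-ratios) that the paper has already excluded. So the main obstacle is essentially bookkeeping around the square root, not any genuine geometric content.
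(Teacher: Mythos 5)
Your argument is correct and is essentially the paper's own proof: normalize to $a=0$, $b=1$, $d=c+1$, compute $\crr(c,a,b,d)=c^2$ and $\crr(c,a,b,g)=-c$ for the diagonal intersection $g$, and conclude from Lemma~\ref{lem:crequation} together with injectivity of the cross-ratio in its fourth argument. One small correction: since $b-a=d-c$, the diagonals are $ad$ and $bc$, parametrized by $t(1+c)$ and $1+s(c-1)$ (not $tc$, which is a side through $c$), meeting at the common midpoint $g=(1+c)/2$, which indeed gives $\crr(c,a,b,g)=-c$; your caveat about the branch of $\sqrt{c^2}$ is a genuine subtlety that the paper's own proof also leaves implicit.
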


Furthermore, in the case of a parallelogram the circumcircles of $a, b, c$
and $b, d, c$ are congruent.  Therefore, one of their two bisector circles
is the straight line containing the diagonal $bc$ (see
Figure~\ref{fig:parallelogram} right). The same holds for the other
pair of circumcircles $a, b, d$ and $a, d, c$. Since M{\"o}bius
transformations do not change the intersection angles of curves we obtain
the following lemma.
\begin{lem}
  The insertion point $f(a, b, c, d)$ is one of the intersection points of
  the bisector circles of the pairs of circumcircles mentioned above. 
\end{lem}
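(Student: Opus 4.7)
The plan is to reduce the general statement to the special case of a parallelogram, where the assertion has already essentially been verified, and then transport the result back by a Möbius transformation. First I would apply a Möbius transformation $M$ sending the quadrilateral $a,b,c,d$ to a parallelogram $a',b',c',d'$ satisfying $b'-a' = d'-c'$; the existence of such an $M$ was recalled at the start of the section. Because the insertion point $f$ is expressible purely via cross-ratios (Lemma~\ref{lem:crequation}), we have $M(f(a,b,c,d)) = f(a',b',c',d')$. Since $M$ maps circles to circles and preserves intersection angles, it sends each of the four circumcircles featuring in the two pairs to the corresponding circumcircle on the primed points, and it sends their bisector circles to bisector circles, mapping intersection points to intersection points. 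Hence it suffices to verify the lemma in the parallelogram.

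Next, in the parallelogram I would identify the two bisector circles explicitly. The circumcircles of the triples $(a',b',c')$ and $(b',d',c')$ share the chord $b'c'$ and, as noted immediately before the statement, are congruent, so one of their two bisector circles is the straight line through $b'$ and $c'$. The same reasoning, applied to the pair $(a',b',d')$ and $(a',d',c')$, produces the line through $a'$ and $d'$ as one of their bisector circles. These two lines are the diagonals of the parallelogram.

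Finally, I would invoke the preceding lemma, which identifies $f(a',b',c',d')$ with the intersection point of these two diagonals. Consequently $f(a',b',c',d')$ is a common point of the two bisector circles we singled out. Pulling back by $M^{-1}$ converts this statement into the corresponding statement for $a,b,c,d$, proving the lemma.

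The only delicate point is the commutation of the bisector-circle construction with Möbius transformations, but this is immediate from the characterization of a bisector circle at a common point of two circles as one whose tangent line there bisects the angle between the two tangent lines of the given circles; both incidences and angles are preserved by $M$. No computation is required, so the main task is to articulate these invariances cleanly.
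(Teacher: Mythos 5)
Your proposal is correct and follows essentially the same route the paper takes: it reduces to the parallelogram normal form via a Möbius transformation, identifies one bisector circle of each pair with a diagonal there (using the congruence of the circumcircles), invokes the preceding lemma identifying $f$ with the diagonal intersection, and transports back using the Möbius invariance of $f$ and the angle-preservation of Möbius maps. The paper leaves exactly this argument implicit in the paragraph preceding the lemma, so your write-up just makes it explicit.
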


\begin{figure}[t]
  \begin{overpic}[width=.27\textwidth]{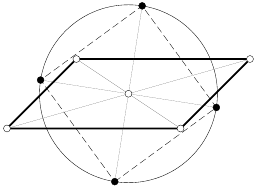}
    \rput(0,16){$a$}
    \rput(70,14){$b$}
    \lput(99,42){$c$}
    \lput(28,53){$d$}
    \lput(49,-3){$p_{ab}$}
    \rput(85,24){$p_{bc}$}
    \lput(56,74){$p_{cd}$}
    \lput(15,45){$p_{da}$}
  \end{overpic}
  \hfill
  \begin{overpic}[width=.27\textwidth]{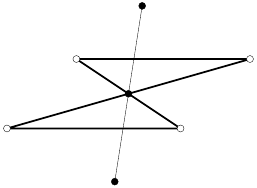}
    \rput(0,16){$a$}
    \rput(70,14){$b$}
    \lput(30,43){$c$}
    \lput(100,41){$d$}
    \lput(44,5){$p_{ab}$}
    \lput(46,37){$p_{bc}$}
    \lput(53,67){$p_{cd}$}
    \lput(100,30){$p_{da}\! =\! \infty$}
  \end{overpic}
  \hfill
  \begin{overpic}[width=.44\textwidth]{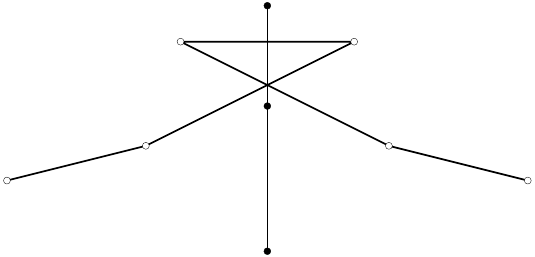}
    \lput(29,16){$a$}
    \rput(67,41){$b$}
    \lput(33,41){$c$}
    \lput(73,15){$d$}
    \rput(52,1){$p_{da}$}
    \rput(52,25){$p_{bc}$}
    \rput(52,45){$p_{cd}$}
    \rput(10,30){$p_{ab}\! =\! \infty$}
    \lput(100,25){\includegraphics[width=.12\textwidth]{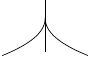}}
  \end{overpic}
  \caption{Special cases of four points together with their curvature
    circle:
    \emph{Left:} The points $p_{ab}, p_{bc}, p_{cd}, p_{da}$ form a
    square.
    \emph{Center:} The curvature circle degenerates to a straight line.
    \emph{Right:} Symmetric curve with a ``loop'' that can be interpreted
    as discrete cusp of the curve. Consistently, the curvature circle
    degenerates to a straight line.
  }
  \label{fig:spezial}
\end{figure}

Consequently, the discrete curvature circle can be constructed with
compass and ruler.  In the following lemma we mention three special cases:

\begin{lem}
  \begin{enumerate}\num
    \item\label{item:speciali} 
      Let $a, b, c, d$ be a parallelogram with $b - a = c - d$.
      Then the four points $p_{ab}, p_{bc}, p_{cd}, p_{da}$ form a square
      (see Figure~\ref{fig:spezial} left).
    \item\label{item:specialii} 
      Let $a, b, c, d$ be a parallelogram with $b - a = d - c$.
      Then $p_{da} = \infty$ and the curvature circle degenerates to a
      straight line (see Figure~\ref{fig:spezial} center).
    \item\label{item:specialiii} 
      Let $a, b, c, d$ be symmetric as in Figure~\ref{fig:spezial}
      (right).  Then $p_{ab} = \infty$ and the curvature circle
      degenerates to a straight line. Thus, this arrangement of points can
      be seen as a discrete analogue of a cusp on a curve.
  \end{enumerate}
\end{lem}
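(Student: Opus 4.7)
My plan is to verify each case by direct computation in a normalized form chosen to exploit the symmetry of the configuration.

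For \ref{item:speciali}, the hypothesis $b - a = c - d$ is equivalent to $a + c = b + d$, so the parallelogram has a center of point symmetry. Translating and rescaling by a similarity (which preserves squares and commutes with the M{\"o}bius invariant insertion rule), I may take $a = -1$, $c = 1$, $b = w$, $d = -w$ for some $w \in \C$. A short calculation gives $\crr(c, a, b, d) = 4w/(w + 1)^2$ with square root $2\sqrt{w}/(w + 1)$, and substituting into \eqref{eq:inserting} for $p_{bc} = f(a, b, c, d)$ collapses to $p_{bc} = \sqrt{w}$. A direct verification of the identity $f(-a, -b, -c, -d) = -f(a, b, c, d)$, applied to the symmetry $z \mapsto -z$ of the configuration (which exchanges $a \leftrightarrow c$ and $b \leftrightarrow d$), then gives $p_{da} = -p_{bc} = -\sqrt{w}$ and $p_{ab} = -p_{cd}$; one further analogous computation yields $p_{ab} = -i\sqrt{w}$ and hence $p_{cd} = i\sqrt{w}$. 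The four values $\pm\sqrt{w}, \pm i\sqrt{w}$ are the vertices of a square centered at the origin.

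For \ref{item:specialii}, the hypothesis $b - a = d - c$ is precisely the parallelogram normalized just before the lemma, where $a = 0$, $b = 1$, $d = c + 1$ yields $\crr(c, a, b, d) = c^2$ and hence $\sqrt{q} = c$. By the identity $\crr(c, a, b, d) = \crr(a, c, d, b)$, the same value governs $p_{da} = f(c, d, a, b)$, and plugging into \eqref{eq:inserting} the denominator $(d - c)\sqrt{q} + (a - c) = c + (-c)$ vanishes identically, so $p_{da} = \infty$ and the circle degenerates to a straight line.

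For \ref{item:specialiii}, I would first convert ``$p_{ab} = \infty$'' into algebra by equating the denominator of $f(d, a, b, c)$ to zero; squaring and substituting $\crr(b, d, a, c) = (b - d)(a - c)/((d - a)(c - b))$ reduces this to the clean condition $(a - c)(a - d) = (d - b)(c - b)$. The figure displays a reflection symmetry exchanging $a \leftrightarrow d$ and $b \leftrightarrow c$, so placing this axis on the imaginary line yields the ansatz $a = -X + iy_1$, $d = X + iy_1$, $b = X + iy_2$, $c = -X + iy_2$; both sides of the condition then collapse to $2iX(y_2 - y_1)$. A short branch check (using the principal $\sqrt{\cdot}$ of Section~\ref{subsec:quaternions}) rules out the spurious solution introduced by squaring, confirming that $p_{ab} = \infty$. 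The main subtlety throughout is the bookkeeping of which cyclic shift of the cross-ratio controls which $p$, and in \ref{item:specialiii} the fact that $\crr(b, d, a, c)$ lands on the negative real axis so that the principal-branch convention really matters; once these are fixed, each claim follows from a short simplification of \eqref{eq:inserting}.
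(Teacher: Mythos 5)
Parts \ref{item:speciali} and \ref{item:specialii} of your argument are correct. For \ref{item:speciali} you take a genuinely different route from the paper: the paper computes none of the inserted points, but notes that the half-turn about the parallelogram's center makes $p_{ab},p_{cd}$ and $p_{bc},p_{da}$ antipodal pairs with respect to that center, and then the relation $\crr(p_{ab},p_{bc},p_{cd},p_{da})=-1$ from Theorem~\ref{thm:circle} forces a square; your normalization $a=-1$, $c=1$, $b=w$, $d=-w$ instead produces the explicit vertices $\pm\sqrt w$, $\pm i\sqrt w$. That buys concrete coordinates at the price of branch bookkeeping: the principal root of $4w/(w+1)^2$ need not equal $2\sqrt w/(w+1)$, so your individual identifications $p_{bc}=\sqrt w$, $p_{ab}=-i\sqrt w$ may flip sign; this is harmless since the four values form the same square either way, but it deserves a sentence. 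Part \ref{item:specialii} is exactly the paper's (unwritten) ``simple computation'', with the same branch convention $\sqrt{c^2}=c$ already used in the discussion preceding the lemma; for completeness note that the numerator $a(d-c)\sqrt{c^2}+d(a-c)=-c(c+1)\neq 0$, so the point really is $\infty$.

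The gap is in \ref{item:specialiii}. The reflection symmetry exchanging $a\leftrightarrow d$ and $b\leftrightarrow c$ only yields $a=-X_1+iy_1$, $d=X_1+iy_1$, $b=X_2+iy_2$, $c=-X_2+iy_2$ with possibly $X_1\neq X_2$; your ansatz $X_1=X_2$ is an extra hypothesis, not a normalization (the ratio $X_2/X_1$ is a similarity invariant of the configuration), and it is precisely where the claim lives. Indeed, in the general symmetric trapezoid one has $a-c=\overline{b-d}$, $d-a=2X_1$, $c-b=-2X_2$, so your squared condition $(a-c)(a-d)=(d-b)(c-b)$ becomes $X_1\overline{(b-d)}+X_2(b-d)=0$, whose real part $(X_1+X_2)\RE(b-d)=0$ forces $X_2=X_1$; for $X_1\neq X_2$ the point $p_{ab}$ is finite, e.g.\ $a=-2$, $d=2$, $b=1+i$, $c=-1+i$ gives $\crr(b,d,a,c)=-\tfrac14$ and denominator $(a-d)\cdot\tfrac i2+(b-d)=-1-i\neq 0$. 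So your computation establishes the statement only for the rectangle-shaped symmetric configuration, which is presumably what ``as in Figure~\ref{fig:spezial} (right)'' is meant to fix, but you should state that assumption explicitly rather than derive the ansatz from the reflection symmetry alone. Also spell out the branch check: it requires $X(y_2-y_1)>0$, i.e.\ the orientation shown in the figure; for the opposite orientation the principal root has the other sign and it is $p_{cd}$, not $p_{ab}$, that escapes to $\infty$ (the curvature circle still degenerates to a line).
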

\begin{proof}
  \underline{ad \ref{item:speciali}:}
  The rotational symmetry by an angle of $\pi$ of the parallelogram
  implies that the points $p_{ab}$ and $p_{cd}$ are opposite of the center
  of rotation as well as $p_{bc}$ and $p_{da}$. A quadrilateral with this
  property and with a cross\dash ratio of $-1$ (Theorem~\ref{thm:circle})
  must be a square.

  \underline{ad \ref{item:specialii} and \ref{item:specialiii}:}
  It follows from simple computations that $p_{da}$ and $p_{ab}$,
  respectively, vanish to $\infty$. Circles containing this point are
  straight lines.
\end{proof}

\section{Experimental results}
\label{sec:num}

We conducted convergence tests which empirically verify our claims. For this,
we used the following seven curves (see Figure~\ref{fig:curves-display} for
their depiction):

\begin{figure}[h]
  \resizebox{\textwidth}{!}{%
  \includegraphics[height=10mm]{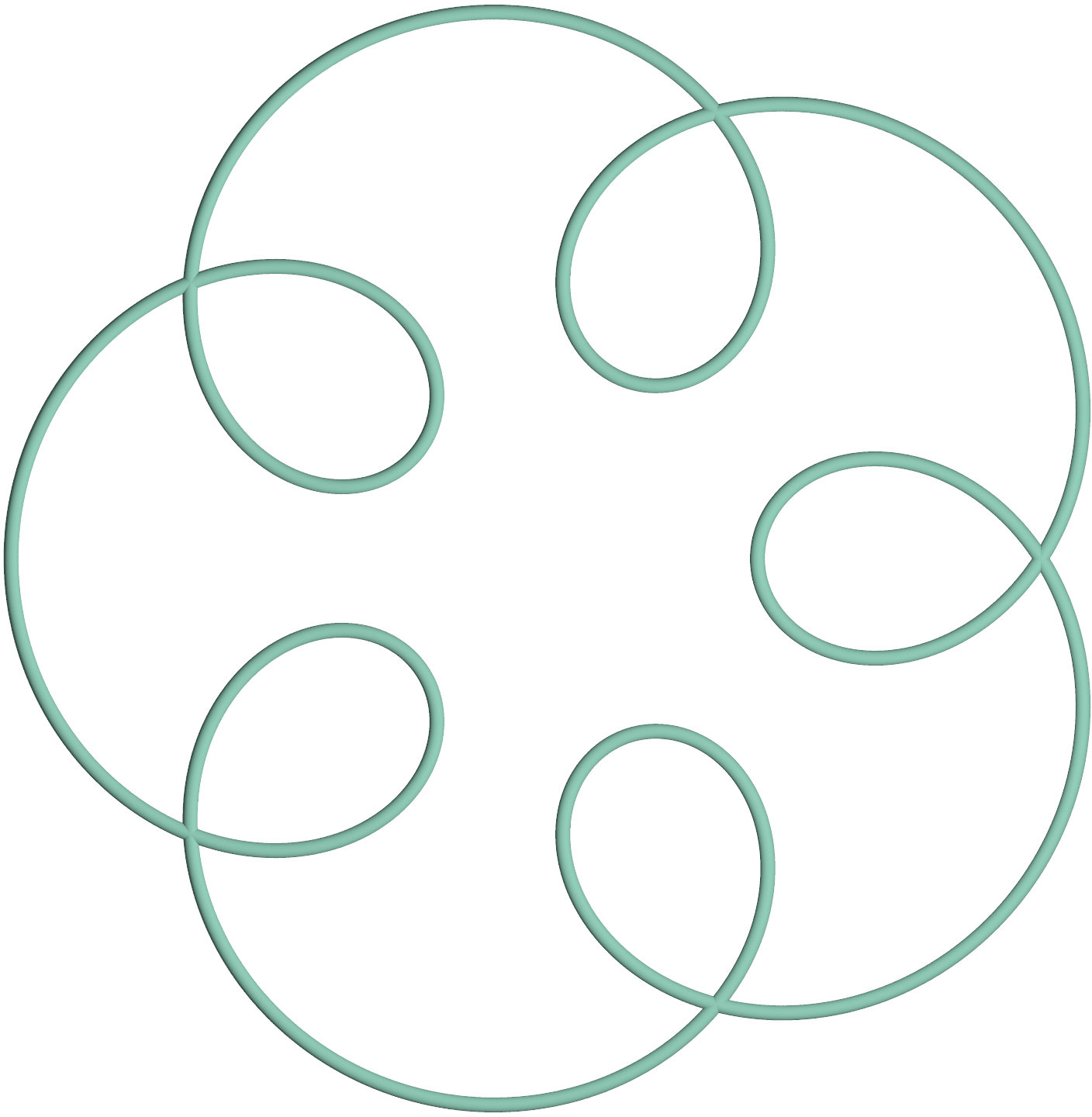}\,%
  \includegraphics[height=10mm]{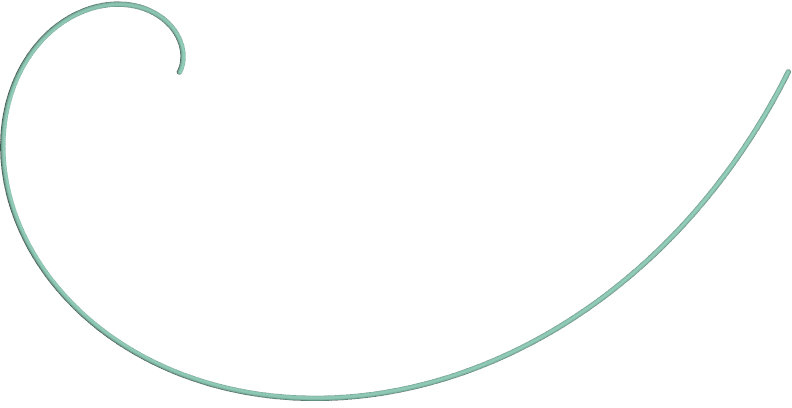}\,%
  \includegraphics[height=10mm]{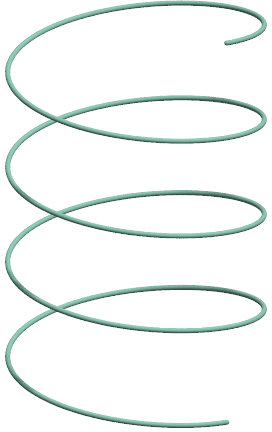}\,%
  \includegraphics[height=10mm]{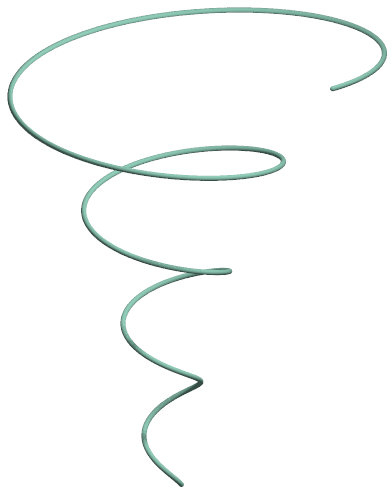}\,%
  \includegraphics[height=10mm]{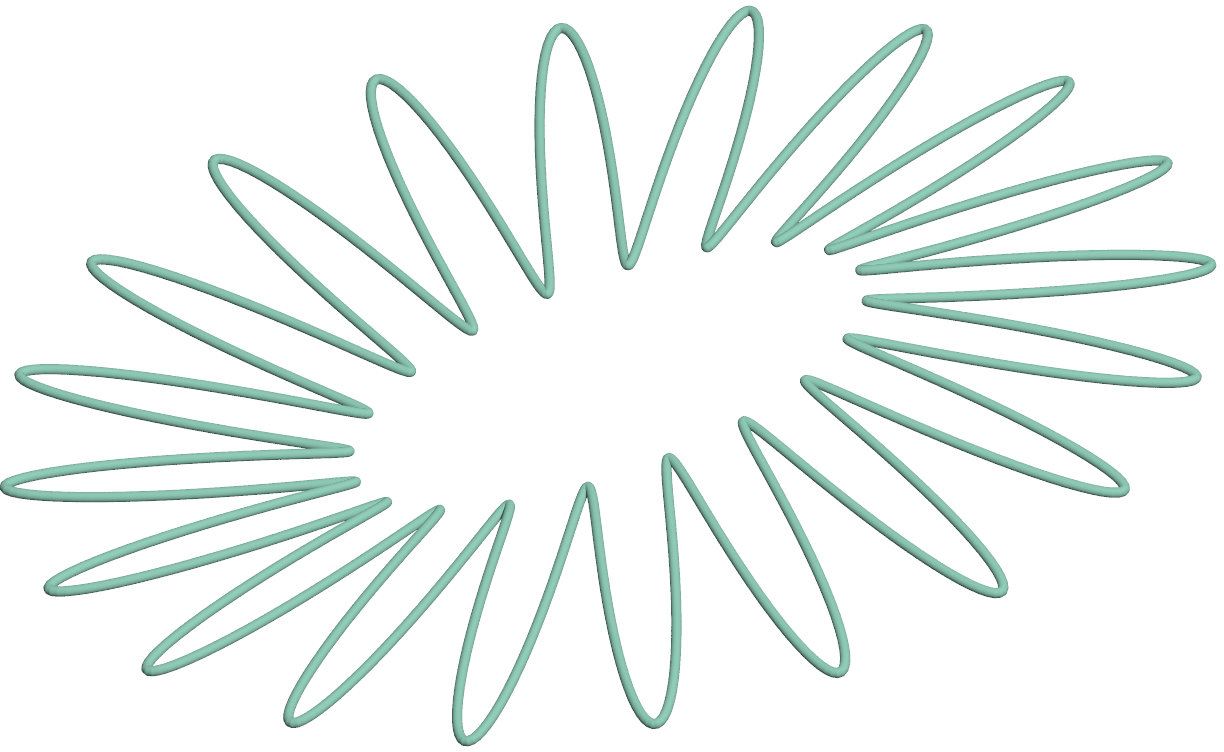}\,%
  \includegraphics[height=10mm]{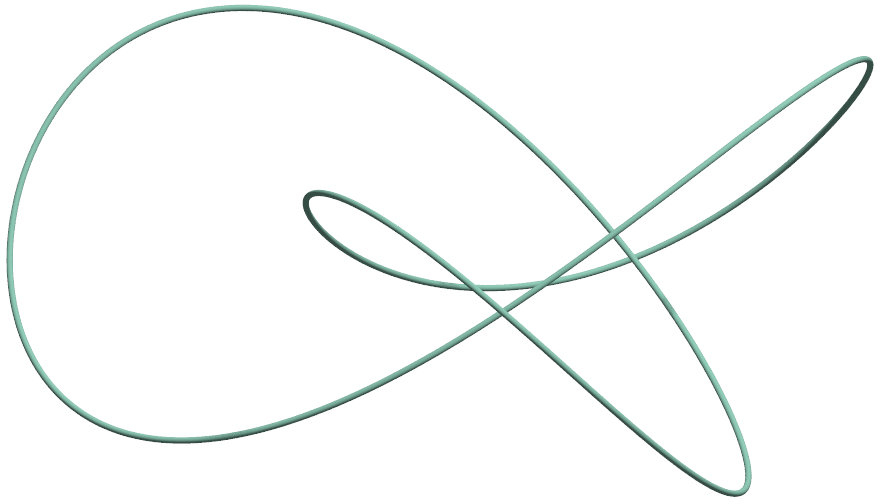}\,%
  \includegraphics[height=10mm]{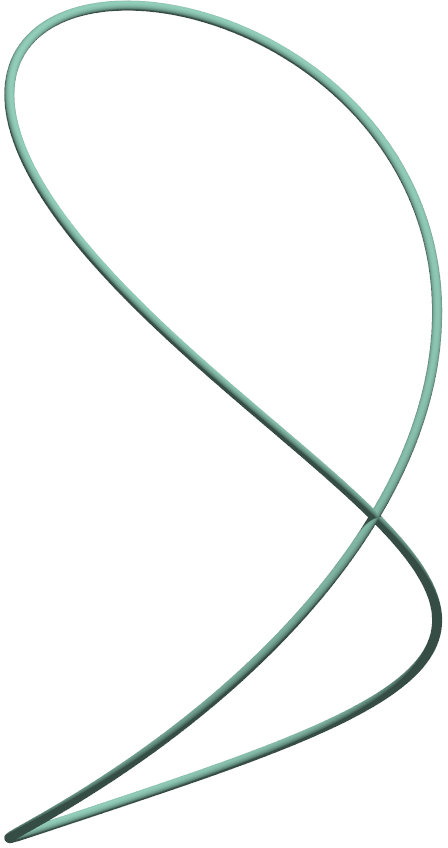}\,%
  }
  \caption{Illustration of the list of curves used for our numerical
  convergence verification.}
  \label{fig:curves-display}
\end{figure}

\begin{enumerate}\num
  \item The epitrochoid 
    $c_1(t) = (6 \cos(t) - 3 \cos(6t),  6 \sin(t) - 3 \sin(6t))$ 
    (the curve is planar).
  \item A planar logarithmic spiral 
    $c_2(t) = e^{at} (\cos(t), \sin(t))$,
    where we use $a = 0.5$.
  \item A helix 
    $c_3(t) = (\cos(at), \sin(at), b t)$ 
    where we use $a=4$ and $b=0.5$.
  \item A helical spiral 
    $c_4(t) = (e^{at} \cos(4t), e^{at} (\sin(4t), bt)$ 
    where we use $a=0.4$ and $b=4$.
  \item A toroidal ``coil'' 
    $c_5(t) = ((a + \sin(bt)) \cos(t), (a + \sin(bt)) \sin(t),
    \cos(bt))$ 
    where we use $a=2.5$ and $b=20$.
  \item The trefoil knot 
    $c_6(t) = (\sin(t) + 2 \sin(2t), \cos(t) - 2 \cos(2t), -\sin(3t))$.
  \item Viviani's curve~\cite{gray:2006} 
    $c_7(t) = (a (1 + \cos(2t)), a \sin(2t), 2 a \sin(t))$
    with $a=5$.
\end{enumerate}

For all examples, we used $t \in [0, 2\pi]$. For simplicity, we
assumed all curves are open, and disregarded boundaries; that is we do not
compute edge midpoint and consequent quantities for edges adjacent to
boundary vertices. The curves are not assumed to be arc\dash length
parametrized.

For any given resolution step $\eps$, we created a discrete curve by
sampling every curve $c_i(t)$, as explained in
Theorem~\ref{thm:mainspacial}. Then, we compute the discrete curvature
$\kappa$, the discrete torsion $\tau$, and the discrete Frenet frame
$\left\{T,N,B\right\}$ for every midedge point. We measure the
approximation error to the corresponding quantities of the smooth curve at
the sampled points by the $l^{\infty}$ norm. This produces the maximum
absolute deviation of every discrete quantity from the ground truth. In
case of vector quantities (like the Frenet frame), we do so per component.
We use $\eps =  0.1 \times 1.1^l$, where $l \in \mathbb{N}$ runs
between $0$ and $-15$ in steps of $-1$, which creates gradual refinement.
To measure convergence rate, we perform linear regression on the
logarithmic scale of $\eps$ vs.\ $l^{\infty}$ error per curve. The
graphs of errors can be seen in Figure~\ref{fig:error-graphs}, and the
convergence rates are in Table~\ref{table:converges-rates}. It is evident
that we are able to reproduce the quadratic convergence rates that we
prove in this paper. Note that we do not measure torsion for $c_1(t)$ and
$c_2(t)$ as they are planar. Another outlier is the normal error for
$c_3(t)$ which is already initially very low (due to the high regularity
of the helix), and thus we only see the effect of numerical noise.

\begin{figure}
\includegraphics[width=0.32\textwidth]{./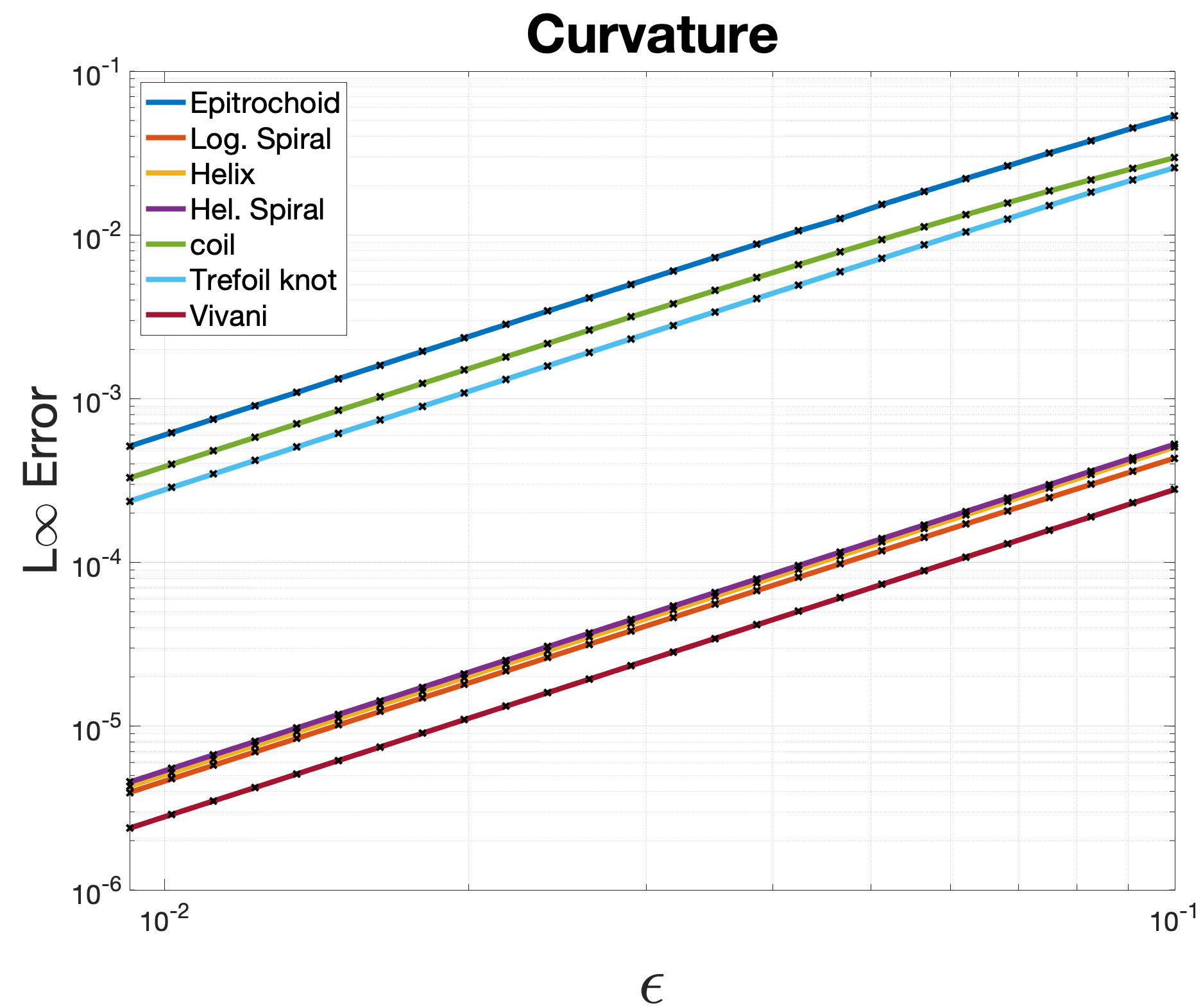}
\includegraphics[width=0.32\textwidth]{./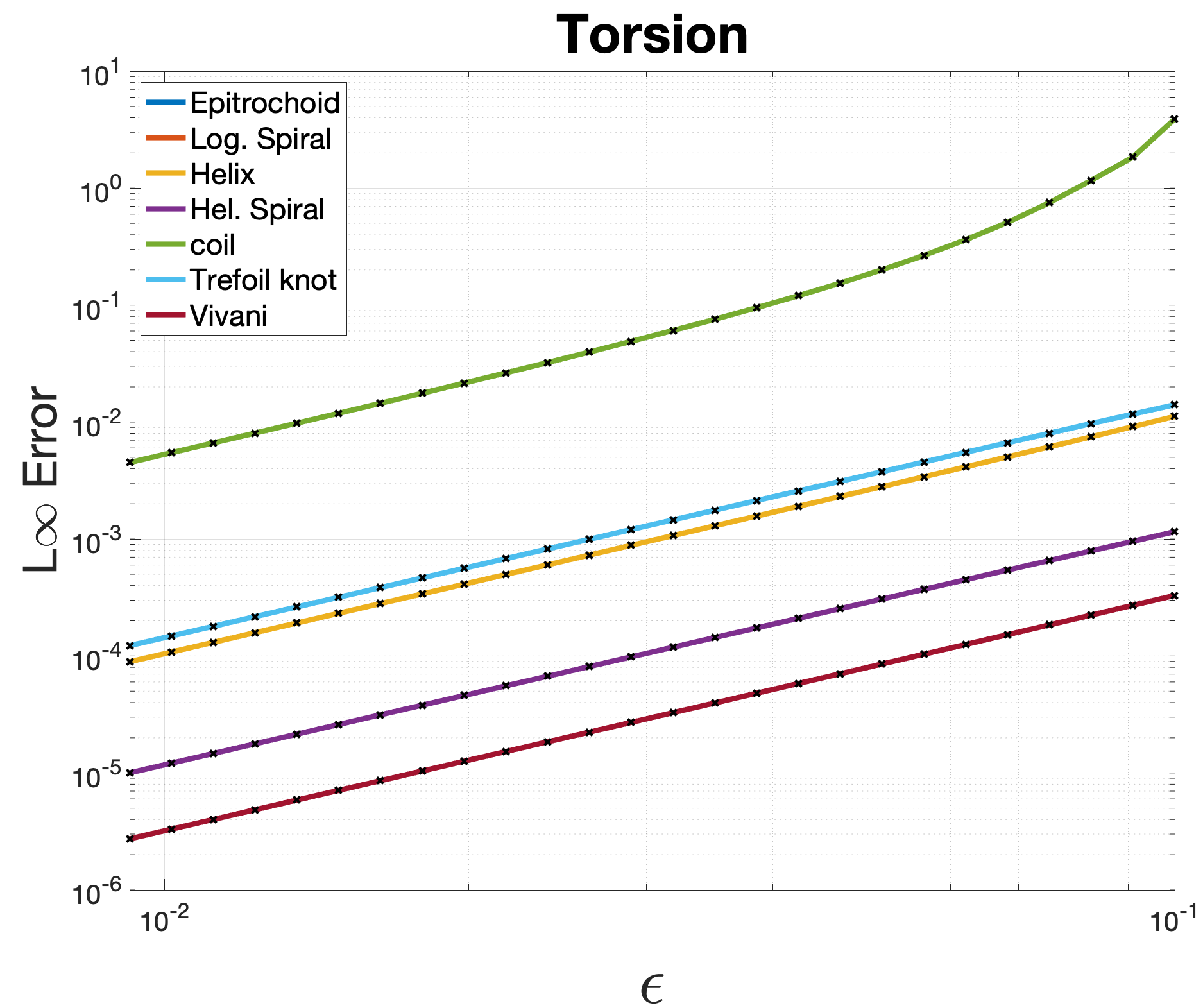}
\includegraphics[width=0.32\textwidth]{./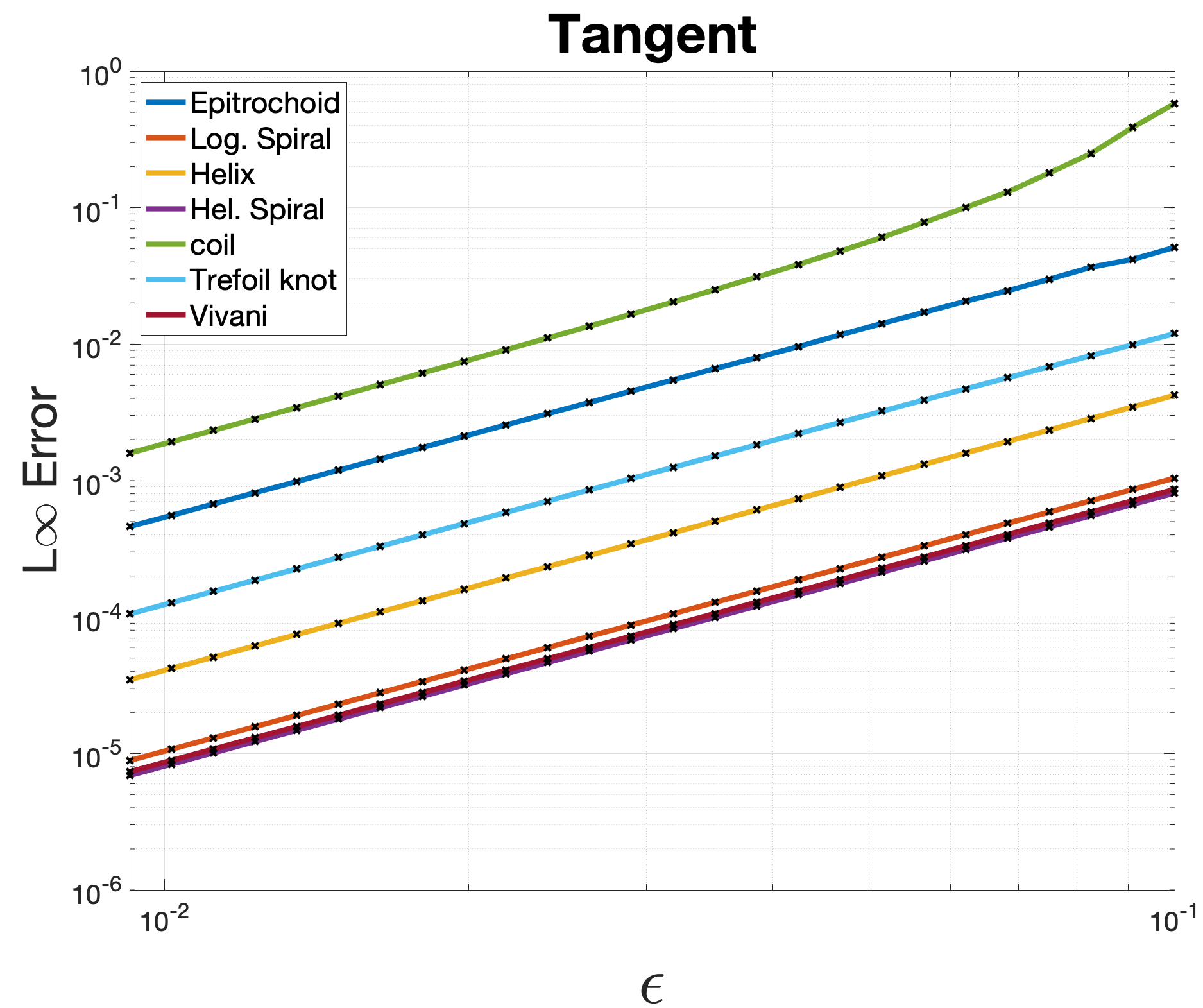}
\includegraphics[width=0.32\textwidth]{./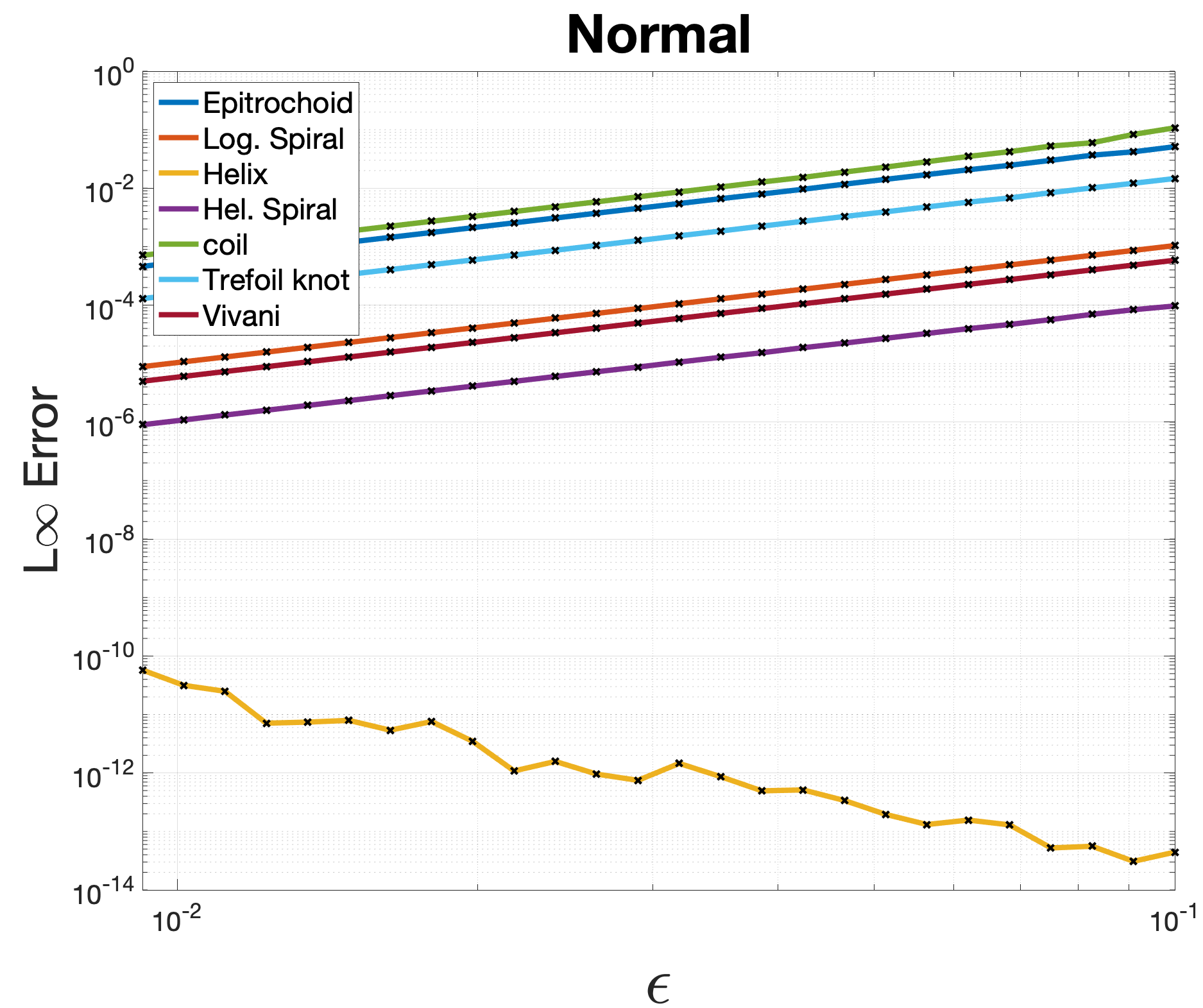}
\includegraphics[width=0.32\textwidth]{./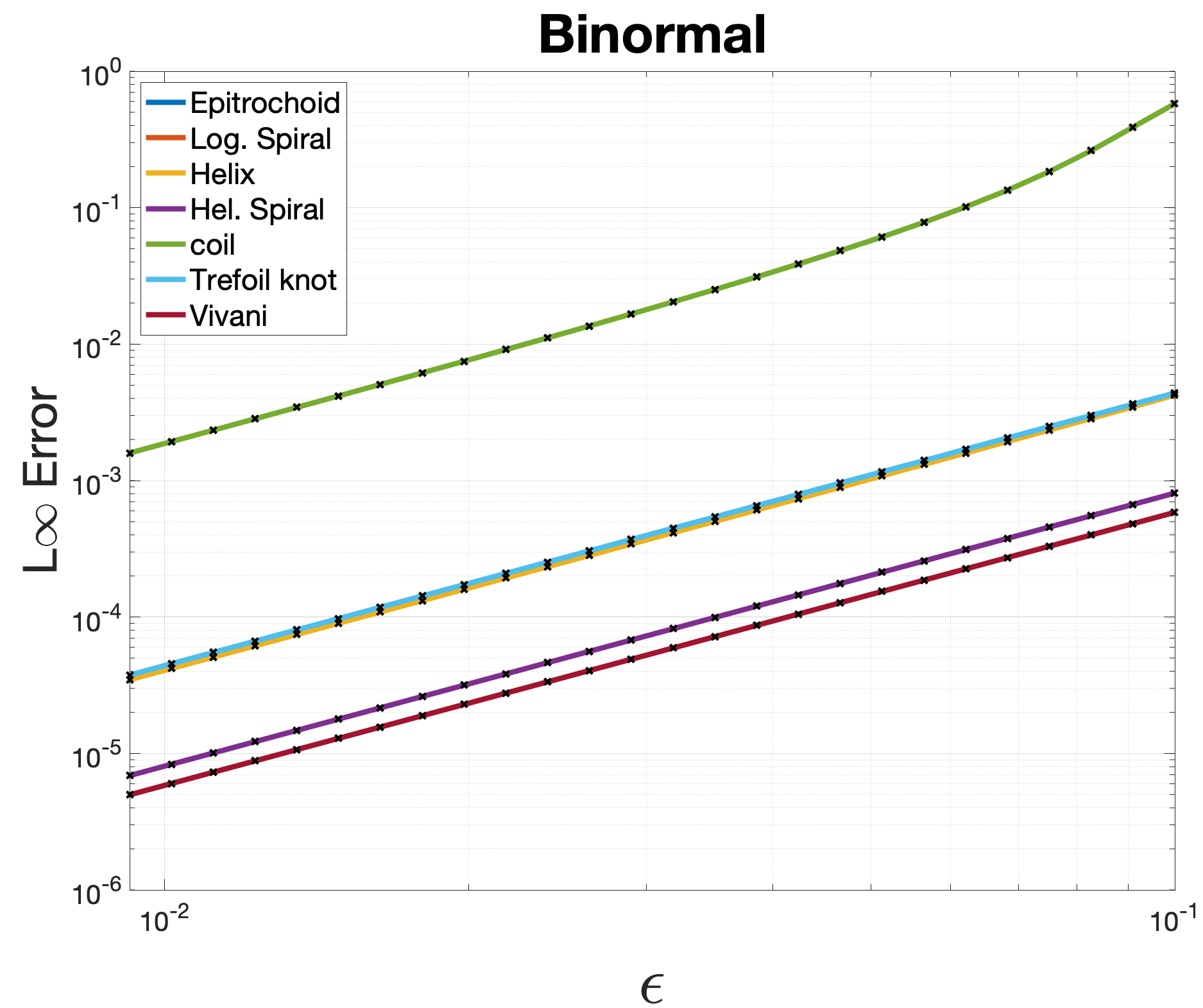}
\caption{$l^{\infty}$ errors vs. sampling step $\eps$.}
\label{fig:error-graphs}
\end{figure}

\begin{table}

\begin{tabular}{| c | c | c | c | c| c |}
\hline
Curve & $\kappa$ & $\tau$ & $T$ & $N$ & $B$ \\ 
\hline
  (i) & 1.9589	& -		& 1.9858	& 1.9858	&   - 		\\     
  (ii) & 1.9745	& -		&  2.0005	&2.0005	&-		\\
  (iii) &   2.0010 	& 2.0212	& 2.0122	& -		&2.0122	\\
  (iv) & 1.9947	& 1.9934	& 2.0003	&1.9742	&2.0002	\\
  (v) & 1.9096 	& 2.5707	& 2.3352	&2.0647	&2.3414	\\
  (vi) & 1.9772  	& 1.9936	& 1.9888	&1.9864	&1.9980	\\
  (vii) & 1.9986	& 2.0102	& 2.0000	&1.9996	&2.0002	\\
\hline
\end{tabular}
\caption{Error convergence rates with refinement. Note that there is no torsion or non-trivial binormal for the planar curves $c_1(t)$ and $c_2(t)$.}
\label{table:converges-rates}
\end{table}

\section*{Acknowledgements}

The first author gratefully acknowledges the support of the Austrian
Science Fund (FWF) through projects P~29981 and I~4868.

\bibliographystyle{plain}
\bibliography{moebiuscurvaturecircle}

\def\Yu{Yu}
\begin{thebibliography}{10}

\bibitem{bobenko+1996}
Alexander~I. Bobenko and Ulrich Pinkall.
\newblock Discrete isothermic surfaces.
\newblock {\em J. Reine Angew. Math.}, 475:187--208, 1996.

\bibitem{bobenko+2008}
Alexander~I. Bobenko and {\Yu}ri~B. Suris.
\newblock {\em Discrete differential geometry. Integrable structure}, volume~98
  of {\em Graduate Studies in Mathematics}.
\newblock American Mathematical Society, 2008.

\bibitem{boutin-2000}
Mireille Boutin.
\newblock Numerically invariant signature curves.
\newblock {\em International Journal of Computer Vision}, 40(3):235--248, Dec
  2000.

\bibitem{buecking-2018}
Ulrike B\"{u}cking.
\newblock {$C^\infty$}-convergence of conformal mappings for conformally
  equivalent triangular lattices.
\newblock {\em Results Math.}, 73(2):Art. 84, 21, 2018.

\bibitem{carroll+2014}
Daniel Carroll, Eleanor Hankins, Emek Kose, and Ivan Sterling.
\newblock A survey of the differential geometry of discrete curves.
\newblock {\em Math. Intelligencer}, 36(4):28--35, 2014.

\bibitem{docarmo-1976}
Manfredo~P. do~Carmo.
\newblock {\em Differential geometry of curves and surfaces}.
\newblock Prentice-Hall, Inc., Englewood Cliffs, N.J., 1976.
\newblock Translated from the Portuguese.

\bibitem{gray:2006}
Alfred Gray, Elsa Abbena, and Simon Salamon.
\newblock {\em Modern Differential Geometry of Curves and Surfaces with
  Mathematica, Third Edition (Studies in Advanced Mathematics)}.
\newblock Chapman \& Hall/CRC, 2006.

\bibitem{hoffmann-2009}
Tim Hoffmann.
\newblock {\em Discrete differential geometry of curves and surfaces},
  volume~18 of {\em COE Lecture Note}.
\newblock Kyushu University, Faculty of Mathematics, Fukuoka, 2009.
\newblock Math-for-Industry (MI) Lecture Note Series.

\bibitem{langer+2005}
Torsten Langer, Alexander~G. Belyaev, and Hans-Peter Seidel.
\newblock Asymptotic analysis of discrete normals and curvatures of polylines.
\newblock In {\em Proceedings of the 21st Spring Conference on Computer
  Graphics}, SCCG '05, pages 229--232, New York, NY, USA, 2005. ACM.

\bibitem{mueller-cosh-2013}
Christian M{\"u}ller.
\newblock Discretizations of the hyperbolic cosine.
\newblock {\em Beitr. Algebra Geom.}, 54(2):509--531, 2013.

\bibitem{pottmann-2009-iir}
Helmut Pottmann, Johannes Wallner, Qixing Huang, and Yong-Liang Yang.
\newblock Integral invariants for robust geometry processing.
\newblock {\em Comput. Aided Geom. Design}, 26:37--60, 2009.

\bibitem{sauer-1970}
Robert Sauer.
\newblock {\em Differenzengeometrie}.
\newblock Springer-Verlag, Berlin-New York, 1970.

\bibitem{sullivan-2008}
John~M. Sullivan.
\newblock Curves of finite total curvature.
\newblock In {\em Discrete differential geometry}, volume~38 of {\em
  Oberwolfach Semin.}, pages 137--161. Birkh\"{a}user, Basel, 2008.

\bibitem{vaxman+2017}
Amir Vaxman, Christian M\"{u}ller, and Ofir Weber.
\newblock Regular meshes from polygonal patterns.
\newblock {\em ACM Trans. Graph.}, 36(4):113:1--113:15, 2017.

\bibitem{vaxman+2018}
Amir Vaxman, Christian M{\"u}ller, and Ofir Weber.
\newblock Canonical {M}{\"o}bius subdivision.
\newblock {\em ACM Trans. Graphics}, 37(6), 2018.
\newblock Proc. SIGGRAPH ASIA.

\end{thebibliography}

\end{document}